\newtheorem{theorem}{Theorem}[section]
\newtheorem{lemma}{Lemma}[section]
\newtheorem{example}{Example}[section]
\newtheorem{assumption}{Assumption}[section]
\numberwithin{equation}{section}
\newcommand{\dd}{\,{\rm d}}
\newcommand{\dist}{\,{\rm dist}}
\newcommand{\divo}{\,{\rm div}}
\begin{document}
	
\begin{frontmatter}
	\title{Optimal error estimates of the diffuse domain method for semilinear parabolic equations}
	\author[psu]{Yuejin Xu\corref{cor1}}
	\ead{yuejinxu@um.edu.mo}
	\address[um]{Faculty of Science and Technology, University of Macau, 999078, Macau}
	\cortext[cor1]{Corresponding author}
	\begin{abstract}
		In this paper, we mainly discuss the convergence behavior of diffuse domain method (DDM) for solving semilinear parabolic equations with Neumann boundary condition defined in general irregular domains. We use a phase-field function to approximate the irregular domain and when the interface thickness tends to zero, the phase-field function will converge to indicator function of the original domain. With this function, we can modify the problem to another one defined on a larger rectangular domain that contains the targer physical domain. Based on the weighted Sobolev spaces, we prove that when the interface thickness parameter goes to zero, the numerical solution will converge to the exact solution. Also, we derive the corresponding optimal error estimates under the weighted $L^2$ and $H^1$ norms. Some numerical experiments are also carried out to validate the theoretical results.
		
	\end{abstract}
	\begin{keyword}
		Parabolic equations, irregular domains, diffuse domain method, weighted norms, error estimates
	\end{keyword}		
\end{frontmatter}	
	
\section{Introduction}

Complex geometric shapes are ubiquitous in our realistic life and scientific problems arsing from biological systems and man-made object, such as vein networks in plant leaves, tumors in human bodies, microstructures in materials, etc. In recent years, interface problems on complex geometry have been received widespread attention. For example, the Allen-Cahn equation \cite{Acta1979} for describing the phase transition and separation \cite{DuFeng2020}, the time-dependent advection-diffusion equation and the Navier-Stokes equations for fluid dynamics \cite{Roger1984}, the Stokes-Darcy problems arising in petroleum and biomedical engineering \cite{MartinaBoris2023}. Relied on explicit surface parameterization, sharp interface approaches are always been applied to develop numerical methods for interface problems. The drawback of this approach is the complicated mesh generation. Quite a lot of approaches are used to solve interface problems, such as extended and composite finite element methods \cite{DolbowHarari2009,FriesBelytschko2010}, immersed interface methods \cite{LeVequeLi1995,LiIto2006,WangKai2023}, virtual node methods with embedded boundary conditions \cite{BedrossianZhu2010,HellrungLee2012}, and matched interface and boundary methods \cite{ZhaoWei2009,LiFeng2015,AmlanRay2023}, and so on. However, standard finite element and finite difference software packages can't be directly used to implement these methods. So, it's necessary to develop some corresponding software packages, which is time-consuming and costly. Compared with the traditional methods, in the past two decades, the diffuse interface method (DIM) has gained widespread attention due to its flexibility and applicativity. Instead of parameterizing the physical domain explicitly, DIM method represents the domain implicitly with a phase-field function, which can be regarded as an indicator of the domain when the interface thickness tends to zero. Then, this method can be coupled with some classical discretization schemes, such as finite difference method,  finite element method \cite{YangMao2019}, spectral method \cite{BuenoFenton2006}, and Nitsche's method \cite{NguyenStoter2018}, then some efficient numerical methods have been developed for solving interface problems, two-phase flow problems, and various material and engineering applications. Kockelkoren et al. \cite{KockelkorenLevine2003} were the first to apply the diffuse interface method to study diffusion inside a cell with zero Neumann boundary conditions. Based on DIM, many researchers have tried to develop some numerical methods to solve different two-phase flow problems, such as the coupled Navier-Stokes or Cahn-Hilliard systems \cite{FrigeriGrasselli2015,Abels2009}, the miscible fluids of different densities \cite{AbelsLengeler2014}, two-phase flows in complex geometries \cite{LiuChai2022}, compressible fluids \cite{FeireislRocca2010}, Stokes-Darcy coupled equations \cite{MartinaBoris2023}. Moreover, DIM can not only be applied to two-phase problems, but also problems involving more than two phases, by introducing additional labeling functions to distinguish different phases \cite{BrannickLiu2015}. The diffuse interface method has also been used to solve the material model with interfaces that can be advected or stretched \cite{TeigenLi2009,TorabiLowengrub2009}, the patient-specific human liver model based on MRI scans \cite{Stoter2017}, the PDEs in moving geometries \cite{ElliottStinner2011}, the variational inverse problems \cite{BurgerElvetun2015}, etc. 

The diffuse domain method (DDM) \cite{AndersonMcFadden1998,AndreasAxel2006} is another numerical method developed on the basis of DIM. The phase-field function will transits from 1 inside the original domain to 0 outside the domain rapidly with a narrow transition zone \cite{KarlLowengrub2015}. The original PDE problem then can be rewritten as a modified one defined on a larger rectangular domain, which can easily overcome the difficulty of generating meshes for complex domains. On the one hand, for practical application, the DDM has been used to solve PDEs in complex, stationary, or moving geometries with Dirichlet, Neumann, and Robin boundary conditions \cite{LiLowengrub2009}. Meanwhile, DDM can be coupled with an interface model to simulate two-phase fluid flows while preserving thermodynamic consistency \cite{GuoYu2021}. The DDM approach has also  employed to develop biomedical models, such as the chemotaxis-fluid diffuse-domain model for simulating bioconvection \cite{WangChertock2023}, a needle insertion model \cite{Jerg2020}, etc. On the other hand, for theoretical analysis, many papers have presented the asymptotic analysis of diffuse domain method. Li et al. have proposed several numerical schemes and verified that these schemes can converge asymptotically to the correct sharp interface problem \cite{LiLowengrub2009}. Also, they have proposed that different choices of boundary condition will lead to different numerical accuracy, and specific choices can improve the accuracy to second order \cite{KarlLowengrub2015, AlandLowengrub2010}. In addition to asymptotic analysis, some researchers have also discussed the error estimates in the $L^2$, $L^{\infty}$, and $H^1$-norms for elliptic equations \cite{FranzRoos2012, Schlottbom2016}. Moreover, Burger et al. have analyzed the approximation error in the extended regular region by regarding the phase-field function as weights and constructing the weighted Sobolev spaces \cite{BurgerElvetun2017}. Moreover, the convergence rate for the Stokes-Darcy coupled problem has been discussed on the original domain \cite{MartinaBoris2023}.

In the biomathematics application, we always need to deal with data from structural MRI or CT snapshots at a single time point. The Fisher-KPP PDE (shown below) is a fundamental model to describe the spatio-temporal evolution of the normalized tumor cell density $u$ in the irregular domain $D$ consisting of white matter (WM) and grey matter (GM) regions. The PDE, with Neumann boundary conditions, is given by \cite{ZhangEzhov2025}
\begin{equation*}
	\left\{\begin{split}
		&u_t=\nabla\cdot(A\nabla u)+\rho u(1-u), \quad && \bm x \in D,\\
		&Au\cdot\bm n=0,\quad && \bm x\in \partial D, 
	\end{split}\right.
\end{equation*}
where $\rho$ is the proliferation rate and $A(\bm x)$ is the diffusion tensor, $\bm n$ is the outward normal vector on the boundary $\partial D$. To solve this problem efficiently, in this paper, we will discuss how to apply diffuse domain method to solve general semilinear parabolic equation with Neumann boundary condition. To ensure the approximate solution obtained by the DDM method is meaningful, we will also analyze the approximation error. In the following, we will focus on the semilinear parabolic equation with Neumann boundary condition:
\begin{equation}
	\label{eq2-1}
	\left\{
	\begin{split}
		& u_t = \nabla\cdot(A\nabla u) + f(t,u), \quad && \bm x \in D,\quad 0 \leq t \leq T\\
		& u|_{t=0} = u_0, \quad && \bm x \in D,\\
		& (A\nabla u) \cdot \bm n = g(t,\bm x), \quad && \bm x \in \partial D, \quad 0 \leq t \leq T,
	\end{split}
	\right.
\end{equation}
where $D$ is a $d$-dimensional irregular bounded domain with Lipschitz continuous boundary ($d\geq 1$), $T>0$ is the duration time, $A(\bm x)>0$ is the diffusion coefficient, $u(t,\bm x)$ is the unknown function,   $f(t,u)$ is the nonlinear term of the underlying system, $u_0(\bm x)$ is the initial value, and $g(t,\bm x)$ is the Neumann boundary value. In the remaining part, we will derive the error estimate of the diffuse domain method in the weighted $L^2$ and weighted $H^1$ norms. The analysis techniques mainly follow the weighted Sobolev space-based framework developed in \cite{BurgerElvetun2017}, but also with some improvement. Our analysis successfully proposed a rigorous error estimate of diffuse domain method and provided a possibility of decoupling the interface thickness with the spatial and temporal mesh size for numerical methods based on DDM. Moreover, the optimal convergence rates are derived and have been improved to second order in weighted $L^2$ norm and first order in weighted $H^1$ norm, which are both half an order higher than the existing result \cite{BurgerElvetun2017}. We have also verified the theoretical results by several numerical experiments. To the best of our knowledge, this work presented in this paper is the first study on rigorous error analysis of the DDM for semilinear parabolic equations. 

The rest of the paper is organized as follows. In Section \ref{algorithm-description}, we first proposed the diffuse domain method for \eqref{eq2-1}. Since the framework of diffuse domain method is similar as \cite{HaoJu2025}, we will only give a brief introduction of the algorithm. Then, several preliminaries and lemmas for the weighted Sobolev spaces are given in Section \ref{pre}. The convergence of the diffuse domain solution to the original solution as the interface thickness parameter goes to zero are proved in  Section \ref{theory}, together with the corresponding optimal error estimates under the weighted $L^2$ and $H^1$ norms. In Section \ref{numerical},  some numerical experiments are carried out to validate the theoretical results and demonstrate the excellent performance of the diffuse domain method. Finally, some remarks are drawn in Section \ref{conclusion}.

\section{The diffuse domain method}\label{algorithm-description}

First of all, some standard notations are proposed for later provement. For a given open bounded Lipschitz domain $\Omega \subset \mathbb{R}^d$ and nonnegative integer $s$, denote $H^s(\Omega)$ as the standard Sobolev spaces on $\Omega$ with norm $\|\cdot\|_{s,\Omega}$ and semi-norm $|\cdot|_{s,\Omega}$, and the corresponding $L^2$-inner product is $(\cdot,\cdot)_{\Omega}$. The corresponding norm of space $H^s(\Omega)$ is $\|\cdot\|_{s,\Omega}$ and $\|v\|_{k,\infty,\Omega}={\rm ess}\sup_{|\bm \alpha|\leq k}\|D^{\bm \alpha}v\|_{L^{\infty}(\Omega)}$ for any function $v$ such that the right-hand side term makes sense, where $\bm \alpha=(\alpha_1, \cdots, \alpha_d)$ is a multi-index and $|\bm \alpha|=\alpha_1+\cdots+\alpha_d$. $H_0^s(\Omega)$ is the closure of $C_0^{\infty}(\Omega)$ with homogeneous Dirichlet boundary conditions. 
Moreover, given two quantities $a$ and $b$, $a \lesssim b$ is the abbreviation of $a \leq Cb$, where the hidden constant $C$ is positive and independent of the mesh size; $a\eqsim b$ is equivalent to $a\lesssim b \lesssim a$.


In the following part, we always suppose that $u_0\in H^2(D)$, $A\in H^1(D)$, $g\in L^2(\partial D)$. Then by the variational formula, we can reformulate \eqref{eq2-1} as: find $u \in H^1(D)$ and $u_t \in L^2(D)$ such that
\begin{equation}
	\label{eq2-2}
	\left\{\begin{split}
		&(u_t,v)+a(u,v)=\ell(v), \qquad \forall \,v \in H^1(D),\ 0 \leq t \leq T,\\
		&u(0,\bm x)=u_0(\bm x),
	\end{split}
	\right.
\end{equation}
where the bilinear operator $a(\cdot,\cdot)$ is symmetric positive and defined by
\begin{equation}
	\label{bilinear_operator}
	a(w,v)=\int_{D} A\nabla w\cdot\nabla v \dd \bm x, \qquad \forall\,w, v\in H^1(D),
\end{equation}
and the linear operator $\ell(\cdot)$ is defined by
\begin{equation}
	\label{linear_operator}
	\ell(v)=\int_{D} f(u)v\dd \bm x + \int_{\partial D} gv\dd \sigma,\qquad \forall\,  v\in H^1(D),
\end{equation}

Since the exact domain $D$ is irregular, it's difficult to obtain the above integrals directly. We turn to use the DDM to convert the exact integrals on irregular domains to the weighted integrals on regular domains \cite{BurgerElvetun2017}. Since we have introduced the algorithm in details in our previous work \cite{HaoJu2025}, we will only show the DDM method briefly with some minor revision.

First, let us introduce a signed distance function $d_D(\bm x)=\dist(\bm x,D)-\dist(\bm x,\mathbb{R}^n\setminus D)$, $\bm x \in \mathbb{R}^n$. Then, the domain $D$, the boundary $\partial D$ and the extended domain $D_{s}$ ($s \in (-\epsilon,\epsilon)$) can be reformulated as $D=\{\bm x\,|\,d_D(\bm x)<0 \}$, $\partial D=\{\bm x\,|\,d_D(\bm x)=0 \}$, $D_s=\{\bm x\,|\,d_D(\bm x)<s \}$, respectively. Next, we will denote $\epsilon>0$ as a small interface thickness parameter, and $S$ as a smooth monotonic transition function, such as the sigmoidal function. For simplicity, in this paper, we take
\begin{equation}
	S(s) =\tanh(3s).
\end{equation}

We can easily obtain that $S(\cdot/\epsilon)$ converges to the sign function as $\epsilon$ tends to zero, and hence, the phase-field function $$\omega^{\epsilon}(\bm x):=(1+\varphi^{\epsilon}(\bm x))/2$$  converges to the indicator function $\chi_D$ of $D$. Denote the $\epsilon$-tubular neighborhood of $\partial D$ by $\Gamma_{\epsilon}=D_{\epsilon}\setminus \overline{D_{-\epsilon}}$.	Obviously,
\begin{equation*}
	\omega_{\epsilon}=1, \quad \forall\ \bm x \in D_{-\epsilon}, \quad \omega_{\epsilon}\in (0, 1), \quad \forall \ \bm x \in \Gamma_{\epsilon}, \quad \omega_{\epsilon}=0, \quad \forall\ \bm x \in D_{\epsilon}^C.
\end{equation*}

In order to generate the spatial mesh  conveniently, one usually further fixes a larger rectangular domain $\Omega$ such that $D\subset D_{\epsilon} \subset \Omega$ in practice. Fig. \ref{fig1} describe the geometric relation among the original physical domain $D$, the $\epsilon$-extension domain $D_{\epsilon}$ and the covering rectangular domain $\Omega$. Instead of computing the integrals over irregular domain $D$ exactly, DDM method will use a weighted averaging of the integrals over regular domain $\Omega$. According to the definition of $\omega_{\epsilon}$, we can know that $\omega_{\epsilon}\approx 0$, so the integrals over $\Omega$ in fact will be converted to that over $D_{\epsilon}$. Therefore, for convenience, our theoretical analysis is only based on the $D_{\epsilon}$.

\begin{figure}[htbp]
	\centering
	\label{fig1}
	\centering
	\includegraphics[width = 140pt,height=140pt]{./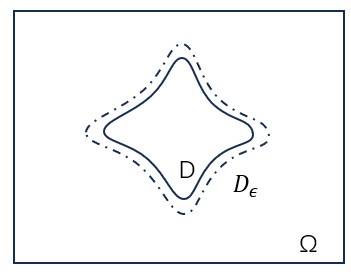}
	\caption{Sketch of an example geometry: $D \subset D_{\epsilon} \subset \Omega$ for some $\epsilon>0$.}
\end{figure}

Following the similar arguments of \cite{BurgerElvetun2017}, according to the Fubini's theorem and co-area formula, we can derive the approximation for the volume integral and boundary integral. For any integrable function $h:\Omega\to\mathbb{R}$,
\begin{subequations}
	\begin{align}
		\int_D h(\bm x)\dd\bm x&\approx\int_{D_{\epsilon}} h(\bm x)\omega_{\epsilon}(\bm x)\dd\bm x,\label{approx1}\\
		\int_{\partial D}h(\bm x)\dd\sigma(\bm x)&\approx \int_{D_{\epsilon}} h(\bm x)\left|\nabla \omega_{\epsilon}(\bm x) \right|\dd\bm x.\label{approx2}
	\end{align}
\end{subequations}

Let us define the weighted  $L^p$ space on $D_{\epsilon}$, $1\leq p < \infty$, associated with the phase-field function $\omega_{\epsilon}$ as follows:
\begin{equation*}
	L^p(D_{\epsilon};\omega_{\epsilon})=\left\{v\,\Big|\,\int_{D_{\epsilon}} |v|^p\,\omega_{\epsilon}\dd \bm x<\infty \right\},
\end{equation*}
with the norm
\begin{align*}
	\|v\|_{L^p(D_{\epsilon};\omega_{\epsilon})}=\left(\int_{D_{\epsilon}}|v|^p \,\omega_{\epsilon}\dd \omega_{\epsilon}\right)^{\frac 1 p}.
\end{align*}
Based on the weighted spaces $L^p(D_{\epsilon};\omega_{\epsilon})$,  the weighted Sobolev spaces are consequently defined as
\begin{equation*}
	W^{s,p}(D_{\epsilon};\omega_{\epsilon})=\left\{v\in L^p(D_{\epsilon};\omega_{\epsilon})\,|\, D^{\bm \alpha}v \in L^p(D_{\epsilon};\omega_{\epsilon}), \ \forall\, |\bm\alpha|\leq s \right\},
\end{equation*}
and $H^s(D_{\epsilon};\omega^{\epsilon}):=W^{s,2}(D_{\epsilon};\omega_{\epsilon})$ with the norm 
\begin{equation*}
	\|v\|_{W^{s,p}(D_{\epsilon};\omega_{\epsilon})}=\left(\int_{D_{\epsilon}}\sum\limits_{|\bm\alpha|\leq s}\left|D^{\bm \alpha}v\right|^p\dd\omega_{\epsilon}\right)^{\frac 1 p}.
\end{equation*}

Therefore, \eqref{approx1} and \eqref{approx2} leads to the following  diffuse domain approximation of \eqref{eq2-2} in $D_{\epsilon}$: find  $u^{\epsilon} \in L^2(0,T;H^1(D_{\epsilon};\omega^{\epsilon}))$ such that
\begin{equation}
	\label{weighted_variational}
	\left(u_t^{\epsilon},v \right)_{D_{\epsilon};\omega^{\epsilon}}+a^{\epsilon}(u^{\epsilon},v)= \ell^{\epsilon}(v), \qquad \forall \,v \in H^1(D_{\epsilon};\omega^{\epsilon}),
\end{equation}
where
\begin{equation*}
	\begin{split}
		a^{\epsilon}(w,v) &= \int_{D_{\epsilon}} \widetilde A\nabla w\nabla v \,\omega^{\epsilon}\dd \bm x,\qquad \forall\,w, v\in H^1(D_{\epsilon};\omega^{\epsilon}),\\
		\ell^{\epsilon}(v) &= \int_{D_{\epsilon}} \widetilde f(u^{\epsilon}) v\omega^{\epsilon}\dd \bm x + \int_{D_{\epsilon}} \widetilde gv\left|\nabla \omega^{\epsilon} \right|\dd \bm x,\qquad \forall\, v\in H^1(D_{\epsilon};\omega^{\epsilon})
	\end{split}
\end{equation*}
with the initial value $u^{\epsilon}(0) = \widetilde{u_0}$. Here $\widetilde A$, $\widetilde{u_0}$, $\widetilde f(u^{\epsilon})$  and $\widetilde g$ are certain extensions of $A$, $u_0$, $f(u)$  and $g$ from $D$ to $D_{\epsilon}$, respectively. 

Let us also assume the domain boundary $\partial D$ is of class $C^{1,1}$ from now on in this paper. It holds that $\dist(\bm x,\partial D)< \epsilon$ for all $\bm x \in \Gamma_{\epsilon}$ and $\dist(\bm x,\partial D)\geq \epsilon$ for all $\bm x \in \Omega\setminus \Gamma_{\epsilon}$.  Due to the regularity of $\partial D$, the projection of $\bm z \in \Gamma_{\epsilon}$ onto $\partial D$ is unique for  sufficiently small $\epsilon$ \cite{Adams1975}, i.e., for each $z \in \Gamma_{\epsilon}$, there exists a unique $\bm x \in \partial D$ such that $\bm z = \bm x+d_{D}(\bm z)\bm n(\bm x)$, where $\bm n(\bm x)$ is the outward unit normal vector for $\bm x\in \partial D$. The extension of function $f$, $u_0$ and $g$ are mainly based on the reflection with respect to the outward normal direction of the domain boundary $\partial D$, while keeping their corresponding regularity. The extension details can be found in the Chapter 4 of \cite{Adams1975} and Section 2 of \cite{HaoJu2025}.

Based on the definition of weight function $\omega_{\epsilon}$, it's obvious that the weighted function  $\omega_{\epsilon}(\bm x)$ and its gradient $\nabla \omega_{\epsilon}(\bm x)$ will vanish on $\partial D_{\epsilon}$ and outside of $D_{\epsilon}$,  we can further extend the integration on $D_{\epsilon}$ to the larger rectangular domain $\Omega$, which is easy for generating spatial grids in practice. Then, the original variational problem \eqref{eq2-2} can be further converted to: find  $u^{\Omega} \in L^2(0,T;H^1(\Omega;\omega^{\epsilon}))$ such that
\begin{equation}
	\label{weighted_variational_1}
	\left(u_t^{\Omega},v\right)_{\Omega;\omega^{\epsilon}}+a^{\Omega}(u^{\Omega},v)= \ell^{\Omega}(v), \quad \forall\, v \in H^1(\Omega;\omega^{\epsilon}), 
\end{equation}
where
\begin{equation*}
	\begin{split}
		a^{\Omega}(w,v) &= \int_{\Omega} \overline{A}\nabla w\nabla v \,\omega^{\epsilon}\dd \bm x,\quad \forall\,w, v\in H^1(\Omega;\omega^{\epsilon}),\\
		\ell^{\Omega}(v) &= \int_{\Omega} \overline{f}(u^{\Omega})v\,\omega^{\epsilon}\dd \bm x + \int_{\Omega} \overline gv\left|\nabla \omega^{\epsilon} \right|\dd \bm x,\quad \forall v\in H^1(\Omega;\omega^{\epsilon})
	\end{split}
\end{equation*}
with the initial value $u^{\Omega}(0)=\overline{u_0}$. Here $\overline{A}$, $\overline{u_0}$, $\overline f(t)$ and $\overline g(t)$ are  the extensions of $\widetilde{A}$, $\widetilde{u_0}(\bm x)$, $\widetilde f(t)$ and $\widetilde g(t)$ from $D_{\epsilon}$ to $\Omega$. It's worth to note that the function extensions in \eqref{weighted_variational_1} are not unique. Since $\omega_{\epsilon}(\bm x)=0$ for $\bm x \in \Omega\setminus D_{\epsilon}$ and consequently $u^{\epsilon}|_{D_{\epsilon}}=u^{\Omega}|_{D_{\epsilon}}$, we only need to focus on the error estimate between $u^\epsilon$ produced by \eqref{weighted_variational} and $u$ produced by \eqref{eq2-2}, which are defined on the domain $D_{\epsilon}$.

\section{Preliminaries}\label{pre}

Note that $\Gamma_{\epsilon}$ can be rewritten as
\begin{equation*}
	\Gamma_{\epsilon}=\left\{\bm z \in \Omega\,\big|\, \exists\  \bm x \in \partial D,\  |s|<\epsilon,\ \bm z =\bm x + s\bm n(\bm x) \right\}.
\end{equation*}
Furthermore, 
$\left|\Gamma_{\epsilon} \right|\lesssim \epsilon\mathcal{H}^{n-1}(\partial D)$,
where $\left|\Gamma_{\epsilon}\right|=\mathcal{L}^n(\Gamma_{\epsilon})$ is the $n$-dimensional Lebesgue measure of $\Gamma_{\epsilon}$ and $\mathcal{H}^{n-1}(\partial D)$ is the $(n-1)$-dimensional Hausdorff measure of $\partial D$. 

\subsection{On the weighted Sobolev space}

The following three theorems (Theorems \ref{trace_thm}, \ref{embed_thm} and \ref{Poincare_thm})
have readily been proved in \cite{BurgerElvetun2017}.
\begin{theorem}[Trace Theorem]
	\label{trace_thm}
	Let $\epsilon_0>0$ be sufficiently small and $1\leq p < \infty$. Then, there exists a constant $C>0$ such that for any $\epsilon \in [0, \epsilon_0]$ and $v \in W^{1,p}(D_{\epsilon};\omega_{\epsilon})$, there holds
	\begin{equation*}
		\int_{D_{\epsilon}} |v|^p|\nabla \omega_{\epsilon}|\dd \bm x \lesssim \|v\|_{W^{1,p}(D_{\epsilon};\omega_{\epsilon})}^p.
	\end{equation*}
\end{theorem}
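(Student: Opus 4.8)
The plan is to reduce the weighted boundary-type integral on the left to a one-dimensional estimate along normal fibers, dispose of that by integration by parts in the normal variable, and then control the single surviving interior-trace term by the \emph{classical} (unweighted) trace theorem, exploiting that $\omega_{\epsilon}\equiv 1$ deep inside $D$. First I would localize: since $\nabla\omega_{\epsilon}\equiv 0$ outside the tubular neighborhood $\Gamma_{\epsilon}$, the left-hand integral is really an integral over $\Gamma_{\epsilon}$. Using the $C^{1,1}$ regularity of $\partial D$ and the unique normal projection for small $\epsilon$, I parametrize $\Gamma_{\epsilon}$ by $\bm z=\bm x+s\bm n(\bm x)$ with $\bm x\in\partial D$, $s\in(-\epsilon,\epsilon)$, and apply the co-area formula together with Fubini's theorem. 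The Jacobian $J(\bm x,s)$ of this change of variables satisfies $c_1\leq J\leq c_2$ uniformly for $\epsilon\leq\epsilon_0$. Because $d_D$ is a signed distance function, $|\nabla d_D|=1$, so $\omega_{\epsilon}$ and $|\nabla\omega_{\epsilon}|$ depend on $\bm z$ only through the normal coordinate $s$; writing $\omega_{\epsilon}=\omega_{\epsilon}(s)$ (monotone decreasing) we have $|\nabla\omega_{\epsilon}|=|\omega_{\epsilon}'(s)|$, and the left-hand side becomes $\int_{\partial D}\int_{-\epsilon}^{\epsilon}|v(\bm x,s)|^{p}|\omega_{\epsilon}'(s)|\,J\dd s\,\dd\sigma(\bm x)$.

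Next I would establish the one-dimensional inequality on each fiber. For smooth $v$ (then for general $v$ by density, using that $W^{1,p}$ functions are absolutely continuous on almost every normal line), integration by parts in $s$ gives
\begin{equation*}
	\int_{-\epsilon}^{\epsilon}|v|^{p}|\omega_{\epsilon}'|\dd s = -|v(\epsilon)|^{p}\omega_{\epsilon}(\epsilon) + |v(-\epsilon)|^{p}\omega_{\epsilon}(-\epsilon) + \int_{-\epsilon}^{\epsilon}\omega_{\epsilon}\,\bigl(|v|^{p}\bigr)'\dd s.
\end{equation*}
The outer boundary term $-|v(\epsilon)|^{p}\omega_{\epsilon}(\epsilon)\leq 0$ is simply discarded since $\omega_{\epsilon}\geq 0$; the inner boundary term is controlled by $|v(\bm x,-\epsilon)|^{p}$ because $\omega_{\epsilon}(-\epsilon)\leq 1$; and the remaining integral is estimated via $|(|v|^{p})'|\leq p|v|^{p-1}|v'|$ followed by Young's inequality, producing a multiple of $\int_{-\epsilon}^{\epsilon}\omega_{\epsilon}\bigl(|v|^{p}+|\partial_{s}v|^{p}\bigr)\dd s$. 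Reassembling over $\partial D$ with the bounded Jacobian, the volume part is $\lesssim\|v\|_{W^{1,p}(\Gamma_{\epsilon};\omega_{\epsilon})}^{p}\leq\|v\|_{W^{1,p}(D_{\epsilon};\omega_{\epsilon})}^{p}$, since $|\partial_{s}v|\leq|\nabla v|$.

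It then remains to bound the interior-trace contribution $\int_{\partial D}|v(\bm x,-\epsilon)|^{p}\dd\sigma(\bm x)\eqsim\int_{\partial D_{-\epsilon}}|v|^{p}\dd\sigma$. The decisive observation is that $\omega_{\epsilon}\equiv 1$ on $D_{-\epsilon}$, so there the weighted norm coincides with the ordinary $W^{1,p}(D_{-\epsilon})$ norm; the classical trace theorem applied on $D_{-\epsilon}$ yields $\int_{\partial D_{-\epsilon}}|v|^{p}\dd\sigma\lesssim\|v\|_{W^{1,p}(D_{-\epsilon})}^{p}=\|v\|_{W^{1,p}(D_{-\epsilon};\omega_{\epsilon})}^{p}\leq\|v\|_{W^{1,p}(D_{\epsilon};\omega_{\epsilon})}^{p}$. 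Combining the two contributions gives the asserted estimate.

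The main obstacle is the uniformity of all constants in $\epsilon\in[0,\epsilon_0]$: one must verify that the co-area Jacobian bounds and, more importantly, the constant in the classical trace theorem on the shrinking family $\{D_{-\epsilon}\}_{\epsilon\leq\epsilon_0}$ stay bounded as $\epsilon\to 0$. This is where the $C^{1,1}$ hypothesis on $\partial D$ is essential, since it makes $\{D_{-\epsilon}\}$ a family of domains with uniformly controlled boundary geometry converging to $D$ (the case $\epsilon=0$ reducing to the ordinary trace inequality on $D$ itself). A secondary, purely technical point is the passage from smooth $v$ to general $v\in W^{1,p}(D_{\epsilon};\omega_{\epsilon})$, which I would handle by a density argument combined with the absolute continuity of Sobolev functions on almost every normal line.
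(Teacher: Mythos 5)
The paper does not prove this theorem itself --- it simply cites \cite{BurgerElvetun2017} --- and your argument is a correct reconstruction of the standard proof used there: reduction to the tubular neighborhood via normal coordinates and the co-area formula, a one-dimensional integration by parts in the normal variable against the monotone profile $\omega_{\epsilon}(s)$, absorption of the $\int\omega_{\epsilon}\bigl(|v|^{p}+|\partial_{s}v|^{p}\bigr)$ term into the weighted norm, and control of the surviving inner trace on $\partial D_{-\epsilon}$ by the classical trace theorem, with the $C^{1,1}$ regularity of $\partial D$ guaranteeing uniformity of the Jacobian and trace constants for $\epsilon\leq\epsilon_{0}$. You also correctly flag the only delicate points (uniformity in $\epsilon$ and the ACL/density justification of the fiberwise integration by parts), so no gap remains.
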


\begin{theorem}[Embedding Theorem]\label{embed_thm}
	Suppose that $\epsilon \in (0, \epsilon_0]$, $1\leq p < \infty$, and $\alpha>0$ be the constant satisfies that for all $s \in (0, 2)$, $\zeta_1 s^{\alpha}\leq (1+S(s-1))/2\leq \zeta_2 s^{\alpha}$ for some constants $\zeta_1,\zeta_2>0$. Then the following embeddings are continuous
	\begin{equation*}
		W^{1,p}(D_{\epsilon};\omega_{\epsilon}) \hookrightarrow L^q(D_{\epsilon};\omega_{\epsilon}), \quad 1 \leq q \leq p_{\alpha}^*, \ q < \infty,
	\end{equation*}
	where
	$	p_{\alpha}^*=\frac{p(n+\alpha)}{n+\alpha-p}$ for $p < n+\alpha$ and  $p_{\alpha}^*=\infty$ for $p \geq n+\alpha.$
	Moreover, there exists a constant $C$ independent of $\epsilon$ such that for any $v \in W^{1,p}(D_{\epsilon};\omega_{\epsilon})$, there holds
	\begin{equation*}
		\|v\|_{L^q}(D_{\epsilon};\omega_{\epsilon}) \lesssim \|v\|_{W^{1,p}(D_{\epsilon};\omega_{\epsilon})}.
	\end{equation*}
\end{theorem}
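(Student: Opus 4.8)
The plan is to prove the embedding by splitting $D_{\epsilon}$ into the interior bulk $D_{-\epsilon}$, on which $\omega_{\epsilon}\equiv 1$, and the transition layer $\Gamma_{\epsilon}$, on which the weight degenerates like a power of the distance to the edge where $\omega_{\epsilon}$ vanishes. On $D_{-\epsilon}$ the weighted and unweighted norms coincide, so there the claim reduces to the classical Sobolev embedding $W^{1,p}(D_{-\epsilon})\hookrightarrow L^{q}(D_{-\epsilon})$; this covers the whole required range, since $p_{\alpha}^{*}\le pn/(n-p)$ when $p<n$, while every finite $q$ is admissible once $p\ge n$. Because the domains $D_{-\epsilon}$ are nested with uniformly $C^{1,1}$ boundaries, a common extension operator into a fixed neighbourhood delivers an embedding constant independent of $\epsilon$. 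All the genuinely weighted difficulty therefore concentrates on $\Gamma_{\epsilon}$.

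On $\Gamma_{\epsilon}$ I would pass to the tubular (normal--tangential) coordinates $\bm z=\bm x+s\bm n(\bm x)$, $\bm x\in\partial D$, $|s|<\epsilon$, whose existence and bi-Lipschitz regularity for small $\epsilon$ are guaranteed by the $C^{1,1}$ assumption and the unique-projection property recalled above. The Jacobian of this map is bounded above and below by constants independent of $\epsilon$, and in these coordinates $\omega_{\epsilon}$ depends only on the normal variable, $\omega_{\epsilon}(\bm z)=\phi(s/\epsilon)$ with $\phi(r)=(1+S(r))/2$ up to the sign convention. The standing hypothesis $\zeta_{1}s^{\alpha}\le(1+S(s-1))/2\le\zeta_{2}s^{\alpha}$ says precisely that $\phi$ vanishes to order $\alpha$ at the edge of the layer, i.e. $\phi(r)\eqsim \dist(\cdot,\text{zero edge})^{\alpha}$, so the weighted measure is comparable to a power weight along the normal direction.

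After flattening by a finite partition of unity subordinate to a boundary atlas, the weighted inequality on each chart becomes an estimate on a slab $U\times(0,2\epsilon)$ carrying a weight comparable to $\tau^{\alpha}$ in the normal variable $\tau$. Here I would combine a one-dimensional Hardy/Sobolev inequality in $\tau$ with weight $\tau^{\alpha}$ --- valid for every $\alpha>0$ by the Opic--Kufner power-weight criteria, which is exactly why I reduce to one dimension rather than invoke Muckenhoupt $A_{p}$ theory, since the latter fails once $\alpha\ge p-1$ --- with the ordinary $(n-1)$-dimensional Sobolev embedding in the tangential variables, glued together by a Gagliardo--Nirenberg product estimate. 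The bookkeeping of the two contributions produces the effective dimension $(n-1)+(1+\alpha)=n+\alpha$ and hence the sharp exponent $p_{\alpha}^{*}=p(n+\alpha)/(n+\alpha-p)$; the measure bound $|\Gamma_{\epsilon}|\lesssim\epsilon\mathcal H^{n-1}(\partial D)$ and the Trace Theorem \ref{trace_thm} control the lower-order terms.

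The main obstacle I anticipate is the $\epsilon$-uniformity of the constant: the layer thickness shrinks with $\epsilon$, so a crude change of variables $s=\epsilon\tau$ produces spurious negative powers of $\epsilon$ whenever one bounds $\|v\|_{L^{q}}$ by the full $W^{1,p}$ norm. The resolution is to use the embedding only in its scale-covariant form at the critical exponent, where the power-weight measure $\tau^{\alpha}\dd\tau$ is exactly balanced by the Sobolev scaling, so that the $\epsilon$-factors cancel for $q=p_{\alpha}^{*}$ and remain bounded for subcritical $q$. Establishing this scale-invariant version on a fixed reference slab, and then transferring it to $\Gamma_{\epsilon}$ with $\epsilon$-independent constants, is the technical heart of the argument, and is where I would follow, with the indicated sharpening, the rescaling device of \cite{BurgerElvetun2017}.
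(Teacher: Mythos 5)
First, a point of reference: the paper does not prove Theorem \ref{embed_thm} at all --- it is imported verbatim from \cite{BurgerElvetun2017} (see the sentence preceding Theorem \ref{trace_thm}), so there is no in-paper argument to compare yours against, and the relevant benchmark is the proof in that reference, which likewise isolates the transition layer, passes to tubular coordinates, and reduces to a power-weight estimate on a reference configuration. Your structural observations are sound: on $D_{-\epsilon}$ the weight equals $1$ and $p_{\alpha}^{*}\leq pn/(n-p)$ for $p<n$, so the classical embedding (with an $\epsilon$-uniform extension operator) suffices there; and on $\Gamma_{\epsilon}$ the hypothesis on $S$ makes $\omega_{\epsilon}$ comparable to the $\alpha$-th power of the rescaled distance to the outer edge, which is exactly what produces the effective dimension $n+\alpha$ and the exponent $p_{\alpha}^{*}$.

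As a proof, however, the proposal has a genuine gap precisely at what you call the technical heart: both decisive steps are announced rather than executed. (i) The weighted slab embedding --- gluing the one-dimensional $\tau^{\alpha}$-weighted Hardy/Sobolev inequality to the $(n-1)$-dimensional tangential embedding ``by a Gagliardo--Nirenberg product estimate'' --- is the entire content of the theorem on $\Gamma_{\epsilon}$; the classical Gagliardo--Nirenberg multiplication of one-dimensional estimates rests on the generalized H\"older inequality in a way that does not survive inserting a weight in a single direction without substantial reworking, and none of that bookkeeping is shown. (ii) The proposed cure for $\epsilon$-uniformity is not correct as stated. Rescaling (isotropically on $\epsilon$-balls, or anisotropically on the layer) and applying a fixed reference embedding yields an estimate of the form $\|v\|_{L^{q}(\cdot;\omega_{\epsilon})}\lesssim \epsilon^{n/q-n/p}\|v\|_{L^{p}(\cdot;\omega_{\epsilon})}+\epsilon^{1+n/q-n/p}\|\nabla v\|_{L^{p}(\cdot;\omega_{\epsilon})}$, and at $q=p_{\alpha}^{*}$ the exponent $n/q-n/p$ equals $-n/(n+\alpha)<0$, so the zeroth-order term still carries a negative power of $\epsilon$; criticality does not make it cancel. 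The cancellation you invoke holds only for the homogeneous inequality $\|v-c\|_{L^{q}}\lesssim\|\nabla v\|_{L^{p}}$, after which the constants $c$ (local means) must be controlled via a chaining argument back to the bulk or via the trace inequality --- this is the step that actually does the work in \cite{BurgerElvetun2017}, and it is missing here. Until (i) and (ii) are supplied, what you have is a plausible plan, not a proof.
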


\begin{theorem}[Poincare-Friedrichs-type inequality]\label{Poincare_thm}
	Suppose that $\epsilon \in (0,\epsilon_0]$, $1\leq p<\infty$, and  the extended domain $D_{\epsilon}$ be connected. Then, there exists a constant $C$ independent of $\epsilon$ such that for any $\epsilon \in (0,\epsilon_0)$ and $v \in W^{1,p}(D_{\epsilon};\omega_{\epsilon})$, there holds
	\begin{equation*}
		\|v\|_{L^p(D_{\epsilon};\omega_{\epsilon})} \lesssim \|\nabla v\|_{L^p(D_{\epsilon};\omega_{\epsilon})}^p + \int_{D_{\epsilon}}|v|^p|\nabla \omega_{\epsilon}|\dd \bm x.
	\end{equation*}
	
\end{theorem}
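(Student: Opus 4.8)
The inequality as displayed is scale-homogeneous only if both sides carry the same $p$-th power, so I read the claim as
\[
 \|v\|_{L^p(D_\epsilon;\omega_\epsilon)}^p \lesssim \|\nabla v\|_{L^p(D_\epsilon;\omega_\epsilon)}^p + \int_{D_\epsilon}|v|^p|\nabla\omega_\epsilon|\dd\bm x ,
\]
and it is this form I would establish. Rather than the usual Poincaré route (normalising a maximising sequence, extracting a weakly convergent subsequence and invoking a compact embedding), which here would force one to track how the compactness constant behaves as the transition layer $\Gamma_\epsilon$ collapses, the plan is to prove the estimate \emph{directly} by a Rellich--Ne\v{c}as vector-field identity. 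This produces the $\epsilon$-uniform constant with essentially no extra work, and it does not even require the connectedness of $D_\epsilon$ (connectedness is needed for a mean-subtracted Poincaré--Wirtinger statement, not for this Friedrichs-with-boundary-term form).

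Concretely, I would fix a point $\bm x_0\in\Omega$ and set $\bm b(\bm x)=\bm x-\bm x_0$, so that $\divo\bm b\equiv n$ and $\|\bm b\|_{L^\infty(D_\epsilon)}\le\diam\Omega$, a bound independent of $\epsilon$ since $D_\epsilon\subset\Omega$. Applying the divergence theorem to the field $|v|^p\omega_\epsilon\bm b$ on $D_\epsilon$ and using that $\omega_\epsilon$ vanishes on $\partial D_\epsilon$, the boundary term drops; expanding $\divo(|v|^p\omega_\epsilon\bm b)$ and rearranging then gives
\[
 n\int_{D_\epsilon}|v|^p\omega_\epsilon\dd\bm x = -\int_{D_\epsilon}|v|^p\,(\nabla\omega_\epsilon\cdot\bm b)\dd\bm x-\int_{D_\epsilon}\nabla(|v|^p)\cdot\bm b\,\omega_\epsilon\dd\bm x .
\]
This is the crux of the argument: the constant divergence of the fixed field $\bm b$ reproduces the weighted $L^p$ norm on the left, while the right-hand side is controlled by exactly the two quantities appearing in the claim.

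It then remains to bound the two terms on the right. The first is at most $\|\bm b\|_{L^\infty}\int_{D_\epsilon}|v|^p|\nabla\omega_\epsilon|\dd\bm x$, which is finite (hence legitimate) by the Trace Theorem~\ref{trace_thm} and is already one of the two target quantities. For the second I would use $|\nabla(|v|^p)|\le p|v|^{p-1}|\nabla v|$ followed by Young's inequality with a small parameter $\delta>0$,
\[
 \Big|\int_{D_\epsilon}\nabla(|v|^p)\cdot\bm b\,\omega_\epsilon\dd\bm x\Big|\le p\|\bm b\|_{L^\infty}\!\int_{D_\epsilon}|v|^{p-1}|\nabla v|\,\omega_\epsilon\dd\bm x\le \delta\!\int_{D_\epsilon}|v|^p\omega_\epsilon\dd\bm x + C_\delta\!\int_{D_\epsilon}|\nabla v|^p\omega_\epsilon\dd\bm x .
\]
Choosing $\delta<n$ lets the $\delta$-term be absorbed into the left-hand side, and dividing by $n-\delta$ yields the claimed Poincaré--Friedrichs inequality with a constant depending only on $n$, $p$ and $\diam\Omega$, in particular independent of $\epsilon$.

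The main obstacle is not the algebra but the rigorous justification of the divergence identity for a general $v\in W^{1,p}(D_\epsilon;\omega_\epsilon)$, where the weight degenerates at $\partial D_\epsilon$. I would handle this by approximating $v$ by smooth functions in the weighted norm, applying the identity for smooth functions (where the vanishing of $\omega_\epsilon$ on the $C^{1,1}$ boundary $\partial D_\epsilon$ makes the surface term identically zero), and passing to the limit. Finiteness and continuity of the limiting quantities are exactly what the earlier results supply: Hölder's inequality gives $\int_{D_\epsilon}|v|^{p-1}|\nabla v|\omega_\epsilon\dd\bm x\le\|v\|_{L^p(D_\epsilon;\omega_\epsilon)}^{p-1}\|\nabla v\|_{L^p(D_\epsilon;\omega_\epsilon)}<\infty$, while Theorem~\ref{trace_thm} guarantees that the boundary-type term $\int_{D_\epsilon}|v|^p|\nabla\omega_\epsilon|\dd\bm x$ is finite and continuous in $v$. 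The only structural input beyond these is that $\omega_\epsilon$, being a smooth function of the signed distance $d_D$, indeed vanishes on $\partial D_\epsilon$, which is the standing assumption recorded in Section~\ref{algorithm-description}.
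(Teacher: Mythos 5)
Your argument is essentially correct, but note first that the paper itself offers no proof of this statement: Theorem \ref{Poincare_thm} is imported from the cited elliptic diffuse-domain analysis, where it is obtained by the classical compactness route (argue by contradiction, normalise a violating sequence, extract a limit via the compact embedding of $W^{1,p}(D_{\epsilon};\omega_{\epsilon})$ into $L^p(D_{\epsilon};\omega_{\epsilon})$, and use connectedness of $D_{\epsilon}$ to force the limit to be a constant killed by the trace term). Your Rellich--Ne\v{c}as identity with $\bm b=\bm x-\bm x_0$ is a genuinely different and in several respects stronger argument: the identity $n\int_{D_{\epsilon}}|v|^p\omega_{\epsilon}\dd\bm x=-\int_{D_{\epsilon}}|v|^p\,\nabla\omega_{\epsilon}\cdot\bm b\dd\bm x-\int_{D_{\epsilon}}\nabla(|v|^p)\cdot\bm b\,\omega_{\epsilon}\dd\bm x$ is valid because the flux through $\partial D_{\epsilon}$ vanishes with $\omega_{\epsilon}$, the absorption after Young's inequality is legitimate since $\int_{D_{\epsilon}}|v|^p\omega_{\epsilon}\dd\bm x<\infty$ by hypothesis, and the constant you obtain depends only on $n$, $p$ and $\diam\Omega$, so the $\epsilon$-uniformity and the dispensability of connectedness come for free rather than having to be extracted from a compactness argument. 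You are also right to restore $p$-homogeneity: as printed the inequality is dimensionally inconsistent and your reading is the one actually used downstream. Two points need tightening before this stands as a complete proof. First, the density of smooth functions in $W^{1,p}(D_{\epsilon};\omega_{\epsilon})$, on which your justification of the divergence identity rests, is not automatic for degenerate weights (the $H=W$ identification can fail); it does hold for the power-of-distance behaviour $\omega_{\epsilon}\eqsim\dist(\cdot,\partial D_{\epsilon})^{\alpha}$ encoded in the hypothesis of Theorem \ref{embed_thm}, but this should be cited, not asserted. Second, for $p=1$ the Young step degenerates and you should instead bound the second term directly by $\|\bm b\|_{L^{\infty}}\int_{D_{\epsilon}}|\nabla v|\,\omega_{\epsilon}\dd\bm x$, which only simplifies the conclusion.
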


\subsection{On diffuse volume integrals}

The following two theorems readily come from Theorem 5.2 and Theorem 5.6 in \cite{BurgerElvetun2017}.

\begin{theorem}
	\label{thm1}
	Suppose that $\epsilon\in (0, \epsilon_0]$ and  $h(\bm x) \in H^1(D_{\epsilon};\omega_{\epsilon})$. Then, there exists a constant $C>0$ independent of $\epsilon$ such that
	\begin{equation*}
		\left|\int_{D_{\epsilon}} h(\bm x)\dd \omega_{\epsilon}(\bm x)-\int_D h(\bm x)\dd \bm x \right| \lesssim \epsilon^{\frac 3 2}\|h\|_{H^1(\Gamma_{\epsilon};\omega_{\epsilon})}.
	\end{equation*}
\end{theorem}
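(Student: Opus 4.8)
The plan is to localize the difference to the transition layer $\Gamma_\epsilon$ and then exploit an antisymmetry of the weight profile across $\partial D$ to gain an extra half power of $\epsilon$. First I would observe that, by construction, $\omega_\epsilon \equiv 1$ on $D_{-\epsilon}\subset D$ and $\omega_\epsilon\equiv 0$ on $\Omega\setminus D_\epsilon$, so the integrands of $\int_{D_\epsilon} h\,\omega_\epsilon\dd\bm x$ and $\int_D h\dd\bm x$ coincide outside $\Gamma_\epsilon$. Hence the whole difference, call it $I$, reduces to $\int_{\Gamma_\epsilon} h\,(\omega_\epsilon-\chi_D)\dd\bm x$, an integral supported on a set of width $O(\epsilon)$.

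Next I would pass to tubular (signed-distance) coordinates. Using that $\partial D$ is of class $C^{1,1}$, for small $\epsilon$ every $\bm z\in\Gamma_\epsilon$ is written uniquely as $\bm z=\bm x+s\bm n(\bm x)$ with $\bm x\in\partial D$ and $s=d_D(\bm z)\in(-\epsilon,\epsilon)$, and the coarea/change-of-variables formula gives a Jacobian $J(\bm x,s)=1+O(s)$ with $J\eqsim 1$ and $\partial_s J$ bounded by the (bounded) curvature. In these coordinates $\omega_\epsilon$ depends only on $s$, namely $\omega_\epsilon=\phi(s):=\tfrac12(1-S(s/\epsilon))$, while $\chi_D=\chi_{\{s<0\}}$. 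The decisive structural fact is that $\psi(s):=\phi(s)-\chi_{\{s<0\}}=\tfrac12(\operatorname{sgn}s-S(s/\epsilon))$ is an \emph{odd} function of $s$ (because $S=\tanh$ is odd), supported in $(-\epsilon,\epsilon)$ and vanishing at the endpoints; in particular $\int_{-\epsilon}^{\epsilon}\psi(s)\dd s=0$.

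I would then use this zero mean to subtract the on-boundary value: writing $H(\bm x,s)=h(\bm x+s\bm n)J(\bm x,s)$, we have $\int_{-\epsilon}^{\epsilon}\psi(s)H(\bm x,s)\dd s=\int_{-\epsilon}^{\epsilon}\psi(s)\bigl(H(\bm x,s)-H(\bm x,0)\bigr)\dd s$, and then represent the increment as $H(\bm x,s)-H(\bm x,0)=\int_0^s\partial_\tau H(\bm x,\tau)\dd\tau$. Applying the Cauchy--Schwarz inequality with the splitting $\partial_\tau H=\omega_\epsilon^{-1/2}\cdot\omega_\epsilon^{1/2}\partial_\tau H$ produces the weighted derivative $\bigl(\int|\partial_\tau H|^2\omega_\epsilon\dd\tau\bigr)^{1/2}$ times a weight factor $\bigl(\int_0^{|s|}\omega_\epsilon^{-1}\dd\tau\bigr)^{1/2}$. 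Rescaling $s=\epsilon r$, $\tau=\epsilon\sigma$ turns the remaining one-dimensional integral $\int_{-\epsilon}^{\epsilon}|\psi(s)|\bigl(\int_0^{|s|}\omega_\epsilon^{-1}\dd\tau\bigr)^{1/2}\dd s$ into $\epsilon^{3/2}$ times an $\epsilon$-independent constant; the factor $\epsilon^{1/2}$ comes from the inner weight integral and the factor $\epsilon$ from the layer width $\dd s$. Finally I would integrate over $\partial D$, apply Cauchy--Schwarz using that $\mathcal H^{n-1}(\partial D)$ is a fixed finite constant, and reassemble $\int_{\partial D}\int_{-\epsilon}^{\epsilon}(|\partial_\tau H|^2+|h|^2)\,\omega_\epsilon\,J\dd\tau\,d\mathcal H\eqsim\|h\|_{H^1(\Gamma_\epsilon;\omega_\epsilon)}^2$ to conclude $|I|\lesssim\epsilon^{3/2}\|h\|_{H^1(\Gamma_\epsilon;\omega_\epsilon)}$.

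The step I expect to be the main obstacle is the weighted Cauchy--Schwarz in the previous paragraph: the weight $\omega_\epsilon=\phi(s)$ degenerates to $0$ at the outer edge $s\to\epsilon$, so $\int_0^{|s|}\omega_\epsilon^{-1}\dd\tau$ threatens to blow up there and naively would destroy the clean $\epsilon^{3/2}$ scaling. The resolution I would exploit is the exact matching $\psi(s)=\phi(s)=\omega_\epsilon$ for $s>0$: the decay of $\psi$ on the outer side precisely compensates the growth of the reciprocal weight, so the rescaled integral $\int_{-1}^{1}|\psi_1(r)|\bigl(\int_0^{|r|}\phi_1^{-1}\dd\sigma\bigr)^{1/2}\dd r$ is finite, where $\phi_1,\psi_1$ denote the $\epsilon$-independent rescaled profiles. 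On the inner side $s<0$ the weight stays bounded below by $\phi(0)=\tfrac12$, so no degeneracy occurs there. Keeping careful track of the curvature terms in $\partial_s J$ (which feed the $|h|^2$ part of the weighted norm) and of the fact that the truncated profile makes $\psi$ vanish exactly at $s=\pm\epsilon$ (so the localization to $\Gamma_\epsilon$ is exact) are the remaining technical points; these are handled as in Theorem~5.2 of \cite{BurgerElvetun2017}.
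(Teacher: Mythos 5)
Your argument is correct and is essentially the proof of Theorem~5.2 in \cite{BurgerElvetun2017}, which is exactly what the paper relies on here (the paper gives no proof of its own, stating only that the result ``readily comes from'' that reference). The localization to $\Gamma_\epsilon$, the oddness of $\psi(s)=\tfrac12(\operatorname{sgn}s-S(s/\epsilon))$ yielding the zero mean, the weighted Cauchy--Schwarz with the compensation $\psi=\omega_\epsilon$ on the outer side, and the $\epsilon^{3/2}$ scaling are all the same ingredients as in the cited source, so there is nothing to add.
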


\begin{theorem}
	\label{thm2}
	Suppose that  $\epsilon\in (0, \epsilon_0]$, $w \in H^2(D_{\epsilon};\omega_{\epsilon})$ satisfying $w=0$ on $\partial D$ and  $v \in H^1(D_{\epsilon};\omega_{\epsilon})$. Then, there holds
	\begin{equation*}
		\int_{\Gamma_{\epsilon}} wv\left|\nabla \omega_{\epsilon}\right|\dd \bm x \lesssim \left(\epsilon^{\frac 3 2}\|w\|_{H^2(D_{\epsilon};\omega_{\epsilon})}+\epsilon^{\frac 3 2}\|w\|_{H^2(\Gamma_{\epsilon};\omega_{\epsilon})}\right)\|v\|_{H^1(D_{\epsilon};\omega_{\epsilon})},
	\end{equation*}
	where the hidden constant is independent of $\epsilon$, $u$ and $v$.
\end{theorem}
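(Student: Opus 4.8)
The plan is to exploit the vanishing trace $w|_{\partial D}=0$ through the co-area formula, reducing the tubular integral to one-dimensional integrals along the normal fibres, and then to extract a gain of order $\epsilon^{3/2}$ from a cancellation produced by the symmetry of the transition profile. Observe first that the quantity to be bounded is exactly the error of the diffuse boundary-integral rule \eqref{approx2} applied to the integrand $wv$: since $w=0$ on $\partial D$ the exact boundary integral $\int_{\partial D}wv\dd\sigma$ vanishes, so $\int_{\Gamma_\epsilon}wv|\nabla\omega_\epsilon|\dd\bm x$ is precisely that approximation error. Parametrising $\Gamma_\epsilon$ by $\bm z=\bm x+s\bm n(\bm x)$ with $\bm x\in\partial D$, $|s|<\epsilon$, the $C^{1,1}$ regularity of $\partial D$ gives $\dd\bm z=J(\bm x,s)\dd s\dd\sigma(\bm x)$ with $J=1+O(s)$, while $|\nabla\omega_\epsilon|$ restricts to a function $\rho(s)=|\omega_\epsilon'(s)|$ of the signed distance alone. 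Two structural facts drive the proof. Because $S=\tanh(3\cdot)$ is odd, $\rho$ is even, so its first moment vanishes, $\int_{-\epsilon}^{\epsilon}s\,\rho(s)\dd s=0$, while $\int_{-\epsilon}^{\epsilon}|s|^k\rho(s)\dd s\eqsim\epsilon^k$. And because $|s/\epsilon|<1$ on $\Gamma_\epsilon$ keeps $\tanh(3s/\epsilon)$ bounded away from $\pm1$, the weight is bounded below, $\omega_\epsilon\gtrsim1$ uniformly on $\Gamma_\epsilon$, so that weighted and unweighted $L^2(\Gamma_\epsilon)$ norms are equivalent there.

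Next I would use $w(\bm x,0)=0$ to Taylor-expand along each fibre, $w(\bm x,s)=s\,a(\bm x)+R_w(\bm x,s)$, where $a=\partial_n w|_{\partial D}$ and the integral remainder obeys $|R_w(\bm x,s)|\lesssim|s|^{3/2}\big(\int_{-\epsilon}^{\epsilon}|D^2w|^2\dd\tau\big)^{1/2}$; this is legitimate for a.e.\ fibre since $H^2$ embeds into $C^1$ in one dimension. Writing $I=\int_{\Gamma_\epsilon}wv|\nabla\omega_\epsilon|\dd\bm x=I_1+I_2$ accordingly, the remainder term $I_2$ is handled directly: Cauchy--Schwarz in $s$ against the third moment $\int|s|^3\rho\eqsim\epsilon^3$ yields $|I_2|\lesssim\epsilon^{3/2}\|D^2w\|_{L^2(\Gamma_\epsilon)}\big(\int_{\Gamma_\epsilon}|v|^2|\nabla\omega_\epsilon|\big)^{1/2}$, and then the Trace Theorem (Theorem \ref{trace_thm}) bounds the $v$-factor by $\|v\|_{H^1(D_\epsilon;\omega_\epsilon)}$ while $\omega_\epsilon\gtrsim1$ upgrades $\|D^2w\|_{L^2(\Gamma_\epsilon)}\eqsim\|D^2w\|_{L^2(\Gamma_\epsilon;\omega_\epsilon)}\le\|w\|_{H^2(\Gamma_\epsilon;\omega_\epsilon)}$. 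This is the source of the second norm in the statement.

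The leading term $I_1=\int_{\partial D}a(\bm x)\int_{-\epsilon}^{\epsilon}s\,v(\bm x,s)J(\bm x,s)\rho(s)\dd s\dd\sigma$ is where the cancellation enters. Splitting $vJ(\bm x,s)=vJ(\bm x,0)+\int_0^{s}\partial_\tau(vJ)\dd\tau$, the constant part integrates to zero against $s\,\rho(s)$ by the vanishing first moment, and the remaining part is governed by $\int_0^s\partial_\tau(vJ)\dd\tau$; Cauchy--Schwarz in $s$ (supplying a factor $\epsilon^{1/2}$) together with the first moment $\int|s|\rho\eqsim\epsilon$ gives $|I_1|\lesssim\epsilon^{3/2}\|a\|_{L^2(\partial D)}\|v\|_{H^1(\Gamma_\epsilon)}$. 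Finally $\|a\|_{L^2(\partial D)}=\|\partial_n w\|_{L^2(\partial D)}\lesssim\|w\|_{H^2(D)}\lesssim\|w\|_{H^2(D_\epsilon;\omega_\epsilon)}$ by the standard trace theorem on $D$ together with $\omega_\epsilon\geq\tfrac12$ on $D$, which produces the first norm in the statement, while $\omega_\epsilon\gtrsim1$ on $\Gamma_\epsilon$ once more bounds $\|v\|_{H^1(\Gamma_\epsilon)}$ by $\|v\|_{H^1(D_\epsilon;\omega_\epsilon)}$. Adding the two contributions gives the claimed estimate.

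I expect the main obstacle to be the cancellation in $I_1$ rather than any individual estimate. A naive Cauchy--Schwarz separating $w$ and $v$ is fatally lossy here: since $\int_{\Gamma_\epsilon}|w|^2|\nabla\omega_\epsilon|\eqsim\epsilon^2$ for a generic $w$ vanishing on $\partial D$ with $\partial_n w\neq0$, it only delivers $|I|\lesssim\epsilon\,\|v\|_{H^1(D_\epsilon;\omega_\epsilon)}$, which is a weaker conclusion than the required $\epsilon^{3/2}$ for small $\epsilon$. Recovering the missing half-order forces one to keep $w$ and $v$ together and to use the symmetry of the transition profile through the identity $\int_{-\epsilon}^{\epsilon}s\,\rho(s)\dd s=0$. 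The remaining difficulties---justifying the fibre-wise Taylor expansion for $w\in H^2$ and $v\in H^1$ via Fubini and the one-dimensional Sobolev embedding, and absorbing the $O(s)$ part of the Jacobian $J$, which is itself higher order---are routine once the symmetry mechanism and the uniform lower bound $\omega_\epsilon\gtrsim1$ on $\Gamma_\epsilon$ are in place.
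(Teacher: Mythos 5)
The paper does not actually prove this theorem: it is imported verbatim from Theorem 5.2/5.6 of the Burger--Elvetun reference, so your proposal has to be judged against that source's strategy rather than against an in-paper argument. On that basis your outline is essentially the right one, and it follows the same route as the cited proof: reduce to normal coordinates via the co-area formula with Jacobian $1+O(s)$, Taylor-expand $w$ along each fibre using $w|_{\partial D}=0$ and the one-dimensional embedding $H^2\hookrightarrow C^1$, control the remainder by the third moment of the profile, and extract the extra half order in the leading term from the vanishing first moment $\int_{-\epsilon}^{\epsilon}s\,\rho(s)\dd s=0$ coming from the evenness of $|\nabla\omega_\epsilon|$ in the signed distance. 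Your observation that a naive Cauchy--Schwarz separating $w$ from $v$ can only give $O(\epsilon)$, so that the symmetry of the transition profile is genuinely needed for $\epsilon^{3/2}$, is correct and is the heart of the matter; the splitting of the two contributions also correctly accounts for why the right-hand side carries both $\|w\|_{H^2(D_\epsilon;\omega_\epsilon)}$ (via the trace of $\partial_n w$ on the fixed domain $D$) and $\|w\|_{H^2(\Gamma_\epsilon;\omega_\epsilon)}$ (via the fibre-wise remainder).

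The one step you should not treat as free is the repeated use of ``$\omega_\epsilon\gtrsim 1$ uniformly on $\Gamma_\epsilon$'' to pass from unweighted to weighted norms on $\Gamma_\epsilon$ (for $\|D^2w\|_{L^2(\Gamma_\epsilon)}$ and $\|v\|_{H^1(\Gamma_\epsilon)}$). For the specific profile $S=\tanh(3\cdot)$ this is literally true, with the small constant $(1-\tanh 3)/2$, but it is incompatible with the hypothesis under which the paper's Embedding Theorem (Theorem \ref{embed_thm}) and the Burger--Elvetun framework are stated, namely $\omega_\epsilon\eqsim s^{\alpha}$ with $\alpha>0$ near the outer edge of $\Gamma_\epsilon$, i.e.\ a weight that genuinely degenerates. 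In that degenerate regime the inequality $\|\cdot\|_{L^2(\Gamma_\epsilon)}\lesssim\|\cdot\|_{L^2(\Gamma_\epsilon;\omega_\epsilon)}$ fails, and your fibre-wise Cauchy--Schwarz steps must instead be performed against $\rho(s)$ with the ratio $\rho/\omega_\epsilon$ tracked explicitly (this is where the real technical work in the cited proof lies, and where the admissible range of $\alpha$ enters). So: correct and complete for the paper's stated choice of $S$, but the argument as written does not cover the general weighted setting the theorem is nominally placed in.
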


Next, let us introduce the smooth condition for  $f$ and some regularity condition required for the exact solution $u$ in order to carry out the convergence and error analysis of the diffuse domain method. For simplicity of expressions, in the following analysis we will not distinguish the exact solution $u(t)$, the source function $f(u)$, the Neumann  boundary value $g(t)$, the diffusion coefficient $A$ with their extensions  in $D_\epsilon$ since there are no ambiguities.

\begin{assumption}
	\label{mild_growth}
	The extension of the function $f(t,\zeta)$ to $D_{\epsilon}$ grows mildly with respect to $\zeta$, i.e., there exists a number $p>0$ for $d=1$, or $p\in (0, 2]$ for $d=2$, or $p\in (0,1]$ for $d=3$ such that
	\begin{equation}
		\label{mild_growth_eq}
		\left|\frac{\partial f}{\partial \zeta}(t,\zeta)\right|\lesssim 1+|\zeta|^p,\quad \forall\ t, \zeta \in \mathbb{R}.
	\end{equation}
\end{assumption}

\begin{assumption}
	\label{assumption1}
	The extension of the function $f(t,\bm x)$ to $D_{\epsilon}$ satisfies the following regularity condition:
	\begin{subequations}
		\begin{align}
			\label{smooth_cond1}
			\sup\limits_{0\leq t \leq T}\|f(t,\bm x)\|_{L^2(D_{\epsilon};\omega_{\epsilon})} &\lesssim 1,\\
			\label{smooth_cond2}
			\sup\limits_{0\leq t \leq T}\|f(t,\bm x)\|_{H^1(\Gamma_{\epsilon};\omega_{\epsilon})}&\lesssim 1,
		\end{align}
	\end{subequations}
	where the hidden constant may depend on the terminal time $T$.
\end{assumption}
\begin{assumption}
	\label{regularity_assump}
	The extension of the exact solution $u(t)$ of \eqref{eq2-2}   to $D_\epsilon$ satisfies the following regularity conditions:
	\begin{subequations}
		\begin{align}
			\label{regularity1}
			\sup\limits_{0\leq t \leq T}\|u(t)\|_{H^2(D_{\epsilon};\omega_{\epsilon})} &\lesssim 1,\\
			\label{regularity2}
			\sup\limits_{0\leq t \leq T}\|u_t(t)\|_{W^{1,\infty}(D_{\epsilon})}&\lesssim 1,
		\end{align}
	\end{subequations}
	where the hidden constants may depend on the terminal time  $T$.
\end{assumption}

The following lemma has been proved in Lemma 3.6 in \cite{}.
\begin{lemma}
	\label{lemma1}
	Suppose that  $0<\epsilon\leq \epsilon_0$, $w\in H^1(D_{\epsilon};\omega_{\epsilon})\cap L^{\infty}(D_{\epsilon})$, and $v\in H^1(D_{\epsilon};\omega_{\epsilon})$. Then
	\begin{equation*}
		\left|\int_{D_{\epsilon}}w v\omega_{\epsilon}\dd \bm x-\int_D w v\dd \bm x \right|\lesssim \epsilon^{\frac 1 2} \|w\|_{L^2(\Gamma_{\epsilon};\omega_{\epsilon})}\|v\|_{L^2(\Gamma_{\epsilon};\omega_{\epsilon})},
	\end{equation*}
	where the hidden constant  is independent of $\epsilon$, $u$ and $v$.
\end{lemma}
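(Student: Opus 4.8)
The plan is to confine the discrepancy to the transition layer $\Gamma_\epsilon$ and then try to extract the extra half power of $\epsilon$ from the antisymmetry of the phase field across $\partial D$. Using $\omega_\epsilon\equiv 1$ on $D_{-\epsilon}$ and $\omega_\epsilon\equiv 0$ outside $D_\epsilon$, I would split both integrals over $D_\epsilon=D_{-\epsilon}\sqcup\Gamma_\epsilon$ and $D=D_{-\epsilon}\sqcup(\Gamma_\epsilon\cap D)$; the $D_{-\epsilon}$ parts coincide and cancel, leaving
\begin{equation*}
\int_{D_\epsilon}wv\,\omega_\epsilon\dd\bm x-\int_D wv\dd\bm x=\int_{\Gamma_\epsilon}wv\,\rho_\epsilon\dd\bm x,\qquad \rho_\epsilon:=\omega_\epsilon-\chi_D,
\end{equation*}
where $|\rho_\epsilon|\le 1$ and $\rho_\epsilon$ is supported in $\Gamma_\epsilon$. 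A first, weight-matching Cauchy--Schwarz (noting that $|\rho_\epsilon|=\omega_\epsilon$ on $\Gamma_\epsilon\setminus D$, while $|\rho_\epsilon|=1-\omega_\epsilon\le 2\omega_\epsilon$ on $\Gamma_\epsilon\cap D$ since $\omega_\epsilon\ge\tfrac12$ there) already yields the crude bound $\|w\|_{L^2(\Gamma_\epsilon;\omega_\epsilon)}\|v\|_{L^2(\Gamma_\epsilon;\omega_\epsilon)}$, but without the prefactor $\epsilon^{1/2}$; the whole difficulty is to recover that factor.

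To do so I would pass to normal coordinates: by the $C^{1,1}$ regularity of $\partial D$, every $\bm z\in\Gamma_\epsilon$ has the unique representation $\bm z=\bm x+s\,\bm n(\bm x)$, $\bm x\in\partial D$, $s=d_D(\bm z)\in(-\epsilon,\epsilon)$, and the co-area formula rewrites $\int_{\Gamma_\epsilon}(\cdots)\dd\bm z$ as $\int_{\partial D}\int_{-\epsilon}^{\epsilon}(\cdots)\,J(\bm x,s)\dd s\,\dd\sigma$ with $J=1+O(\epsilon)$. Writing $\omega_\epsilon=\Phi(s/\epsilon)$ with $\Phi(r)=\tfrac12\!\left(1-S(r)\right)$, the choice $S=\tanh(3\,\cdot)$ gives the key symmetry $\Phi(r)+\Phi(-r)=1$, i.e. $\rho_\epsilon(-s)=-\rho_\epsilon(s)$. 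Hence $\int_{-\epsilon}^{\epsilon}\rho_\epsilon\dd s=0$, and the primitive $R_\epsilon(s):=\int_{-\epsilon}^{s}\rho_\epsilon(\tau)\dd\tau$ obeys $R_\epsilon(\pm\epsilon)=0$ and $\|R_\epsilon\|_{L^\infty}\lesssim\epsilon$.

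The mechanism for the half power is then an integration by parts along each normal fibre. Since $v\in H^1(D_\epsilon;\omega_\epsilon)$ and $w\in H^1(D_\epsilon;\omega_\epsilon)\cap L^\infty(D_\epsilon)$, the product $wv$ is absolutely continuous in $s$ for a.e. $\bm x$, so that, using $R_\epsilon(\pm\epsilon)=0$,
\begin{equation*}
\int_{-\epsilon}^{\epsilon}wv\,\rho_\epsilon\dd s=-\int_{-\epsilon}^{\epsilon}\bigl(v\,\partial_s w+w\,\partial_s v\bigr)R_\epsilon\dd s.
\end{equation*}
The factor $R_\epsilon$ contributes one explicit power of $\epsilon$, and reassembling by co-area together with a fibrewise Cauchy--Schwarz leaves gradient terms multiplied by $\epsilon$.

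The main obstacle is exactly this final reconciliation. The by-parts step produces the normal derivatives $\partial_s w$ and $\partial_s v$, whereas the target contains only the weighted $L^2$ norms of $w$ and $v$ themselves. The crux is therefore to absorb these gradients back into $\|w\|_{L^2(\Gamma_\epsilon;\omega_\epsilon)}$ and $\|v\|_{L^2(\Gamma_\epsilon;\omega_\epsilon)}$ --- balancing the explicit power from $\|R_\epsilon\|_{L^\infty}\lesssim\epsilon$ against the two half powers of $\epsilon$ that are already hidden in the weighted layer norms, and invoking the weighted Poincar\'e--Friedrichs inequality (Theorem \ref{Poincare_thm}) together with the trace estimate (Theorem \ref{trace_thm}) to trade gradient control for function control. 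Producing precisely one surviving factor $\epsilon^{1/2}$ out of this interplay, rather than a full $\epsilon$ or none at all, is the delicate heart of the argument and the step I expect to demand the most care, since the needed exchange between $\|\nabla v\|$ and $\|v\|$ over the thin layer must be quantified sharply in the weight $\omega_\epsilon$.
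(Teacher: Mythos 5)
Your reduction of the left-hand side to $\int_{\Gamma_\epsilon}wv\,\rho_\epsilon\dd\bm x$ with $\rho_\epsilon=\omega_\epsilon-\chi_D$ and $|\rho_\epsilon|\le\omega_\epsilon$ on $\Gamma_\epsilon$ is correct, as is your observation that plain Cauchy--Schwarz then loses the factor $\epsilon^{1/2}$. But the mechanism you propose for recovering that factor cannot be completed, and the step you yourself flag as ``the delicate heart'' is not delicate but impossible: after integrating by parts along the normal fibres you hold $\int(v\,\partial_s w+w\,\partial_s v)R_\epsilon$, and you would need to dominate the normal derivatives $\partial_s w,\partial_s v$ by the $L^2(\Gamma_\epsilon;\omega_\epsilon)$ norms of $w,v$ themselves. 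No inequality in that direction exists: Theorems \ref{trace_thm} and \ref{Poincare_thm} control function values by gradients, never the reverse, and a function on a layer of width $\epsilon$ can oscillate with $|\partial_s v|\eqsim\epsilon^{-1}|v|$. Carried through honestly, your computation produces a bound involving $H^1(\Gamma_\epsilon;\omega_\epsilon)$ norms --- i.e.\ an estimate of the type of Theorem \ref{thm1}, whose proof is exactly this cancellation-plus-integration-by-parts mechanism --- not the stated lemma. Indeed no argument can close your plan, because the inequality as literally stated (prefactor $\epsilon^{1/2}$ \emph{and} both factors measured in $L^2(\Gamma_\epsilon;\omega_\epsilon)$) is false. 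Take $w\equiv1$ and $v(\bm z)=\phi(d_D(\bm z)/\epsilon)$ with $\phi$ a fixed smooth bump, $0\le\phi\le1$, supported in $(1/4,3/4)$ and equal to $1$ on $[3/8,5/8]$; then $w,v$ satisfy all hypotheses, $v$ is supported in $\Gamma_\epsilon\setminus\overline D$, and in normal coordinates (using $\omega_\epsilon\eqsim1$ on the band $\{3\epsilon/8<d_D<5\epsilon/8\}$ and $|\Gamma_\epsilon|\lesssim\epsilon$)
\begin{equation*}
\int_{D_\epsilon}wv\,\omega_\epsilon\dd\bm x-\int_D wv\dd\bm x=\int_{\Gamma_\epsilon}v\,\omega_\epsilon\dd\bm x\eqsim\epsilon,
\qquad
\epsilon^{\frac12}\|w\|_{L^2(\Gamma_\epsilon;\omega_\epsilon)}\|v\|_{L^2(\Gamma_\epsilon;\omega_\epsilon)}\eqsim\epsilon^{\frac12}\cdot\epsilon^{\frac12}\cdot\epsilon^{\frac12}=\epsilon^{\frac32},
\end{equation*}
so the claimed bound would force $\epsilon\lesssim\epsilon^{3/2}$ uniformly in $\epsilon$.

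The hypothesis $w\in L^\infty(D_\epsilon)$ --- which your argument never exploits beyond asserting absolute continuity of $wv$ --- is the tell for what the lemma is meant to say and how it is meant to be proved: the $w$-factor on the right-hand side should be $\|w\|_{L^\infty(D_\epsilon)}$ rather than $\|w\|_{L^2(\Gamma_\epsilon;\omega_\epsilon)}$, which is also the form in which the lemma is effectively applied in \eqref{thm5-3}, \eqref{thm5-5} and \eqref{thm5-7}, where $w$ is a uniformly bounded extension whose layer $L^2$ norm is then itself $\lesssim\epsilon^{1/2}\|w\|_{L^\infty}$. (Note the paper offers no proof of its own: it quotes the result from an external reference, with an empty citation, so the comparison here is with the evidently intended argument.) With that reading the proof is two lines and needs none of your machinery: since $|\rho_\epsilon|\le\omega_\epsilon\le1$ on $\Gamma_\epsilon$,
\begin{equation*}
\left|\int_{\Gamma_\epsilon}wv\,\rho_\epsilon\dd\bm x\right|
\le\|w\|_{L^\infty(D_\epsilon)}\int_{\Gamma_\epsilon}|v|\,\omega_\epsilon\dd\bm x
\le\|w\|_{L^\infty(D_\epsilon)}\,|\Gamma_\epsilon|^{\frac12}\,\|v\|_{L^2(\Gamma_\epsilon;\omega_\epsilon)}
\lesssim\epsilon^{\frac12}\|w\|_{L^\infty(D_\epsilon)}\|v\|_{L^2(\Gamma_\epsilon;\omega_\epsilon)},
\end{equation*}
by Cauchy--Schwarz and $|\Gamma_\epsilon|\lesssim\epsilon\,\mathcal H^{n-1}(\partial D)$. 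The half power of $\epsilon$ comes entirely from the measure of the transition layer, not from any cancellation in $\rho_\epsilon$; the same pair $w\equiv1$, $v=\phi(d_D/\epsilon)$ shows this corrected bound is sharp, since both sides are then $\eqsim\epsilon$.
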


We then have the following lemma on the locally-Lipschitz continuity of $f(u)$.

\begin{lemma}
	\label{locally}
	Suppose that the function $f$ satisfies Assumption \ref{mild_growth}, and the exact solution $u\in L^(0,T;H^1(D))$ and its extension to $D_{\epsilon}$ fulfills \eqref{regularity1} in Assumption \ref{regularity_assump}. Then $f$ is locally-Lipschitz continuous with respect to weighted norms in a strip along the extended exact solution $u$, i.e., for any fixed constant $R>0$,
	\begin{equation}
		\label{locally_eq}
		\left\|f(t,v)-f(t,w)\right\|_{L^2(D_{\epsilon};\omega_{\epsilon})}\lesssim \left\|v-w\right\|_{H^1(D_{\epsilon};\omega_{\epsilon})},
	\end{equation}
	for ant $t \in [0,T]$ and $v,w\in H^1(D_{\epsilon};\omega_{\epsilon})$ satisfying
	\begin{equation}
		\label{locally_eq2}
		\max\left\{\left\|v-u\right\|_{H^1(D_{\epsilon};\omega_{\epsilon})},\left\|w-u\right\|_{H^1(D_{\epsilon};\omega_{\epsilon})} \right\}\leq R,
	\end{equation}
	where the hidden constant may depend on $R$.
	
\end{lemma}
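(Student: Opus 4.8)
The plan is to reduce the weighted $L^2$ estimate to a pointwise growth bound on $f(t,\cdot)$ and then to absorb the extra powers of $v$ and $w$ through Hölder's inequality together with the weighted Sobolev embedding of Theorem \ref{embed_thm}. First I would write the increment of $f$ in integral form: for fixed $t$ and $\bm x$, the fundamental theorem of calculus gives
\begin{equation*}
	f(t,v)-f(t,w)=(v-w)\int_0^1 \frac{\partial f}{\partial \zeta}\bigl(t,w+\theta(v-w)\bigr)\dd\theta.
\end{equation*}
Invoking the mild-growth condition \eqref{mild_growth_eq} of Assumption \ref{mild_growth} and the elementary inequality $|w+\theta(v-w)|^p\lesssim |v|^p+|w|^p$, this yields the pointwise bound $|f(t,v)-f(t,w)|\lesssim (1+|v|^p+|w|^p)|v-w|$. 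Squaring, multiplying by $\omega_{\epsilon}$ and integrating over $D_{\epsilon}$ then gives
\begin{equation*}
	\|f(t,v)-f(t,w)\|_{L^2(D_{\epsilon};\omega_{\epsilon})}^2\lesssim \int_{D_{\epsilon}}\bigl(1+|v|^{2p}+|w|^{2p}\bigr)|v-w|^2\,\omega_{\epsilon}\dd\bm x.
\end{equation*}

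Next I would treat the three resulting terms separately. The term carrying the constant $1$ is exactly $\|v-w\|_{L^2(D_{\epsilon};\omega_{\epsilon})}^2$, which is bounded by $\|v-w\|_{H^1(D_{\epsilon};\omega_{\epsilon})}^2$ at once. For the term $\int_{D_{\epsilon}}|v|^{2p}|v-w|^2\omega_{\epsilon}\dd\bm x$ (and symmetrically for $w$) I would apply Hölder's inequality in the weighted space with a pair of conjugate exponents $a,b$, $1/a+1/b=1$, to obtain $\|v\|_{L^{2pa}(D_{\epsilon};\omega_{\epsilon})}^{2p}\,\|v-w\|_{L^{2b}(D_{\epsilon};\omega_{\epsilon})}^2$. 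The exponents are chosen so that $2pa$ and $2b$ both lie below the critical embedding exponent $p_\alpha^*$ of Theorem \ref{embed_thm} (taken with Sobolev exponent $2$ there); on the bounded weighted domain $D_{\epsilon}$, any genuinely small exponent can first be raised to one by finiteness of the measure. Theorem \ref{embed_thm} then controls $\|v-w\|_{L^{2b}(D_{\epsilon};\omega_{\epsilon})}$ by $\|v-w\|_{H^1(D_{\epsilon};\omega_{\epsilon})}$, producing the required factor, while $\|v\|_{L^{2pa}(D_{\epsilon};\omega_{\epsilon})}$ is controlled by $\|v\|_{H^1(D_{\epsilon};\omega_{\epsilon})}$.

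To see that the latter factor is bounded independently of $v$, I would use the strip condition \eqref{locally_eq2} together with the regularity \eqref{regularity1}: since $\|v-u\|_{H^1(D_{\epsilon};\omega_{\epsilon})}\leq R$ and $\|u\|_{H^1(D_{\epsilon};\omega_{\epsilon})}\leq\|u\|_{H^2(D_{\epsilon};\omega_{\epsilon})}\lesssim 1$, the triangle inequality gives $\|v\|_{H^1(D_{\epsilon};\omega_{\epsilon})}\lesssim 1+R$, and likewise for $w$. This is precisely the source of the $R$-dependence of the hidden constant. Collecting the three estimates yields \eqref{locally_eq}.

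The hard part will be the exponent bookkeeping in the Hölder step: one must verify that conjugate exponents $a,b$ exist with $2pa\leq p_\alpha^*$ and $2b\leq p_\alpha^*$ simultaneously. Choosing $a=p_\alpha^*/(2p)$ and $b=p_\alpha^*/2$ reduces feasibility to the single inequality $p_\alpha^*\geq 2p+2$, and I would then check case by case that the dimension-dependent restrictions on $p$ imposed in Assumption \ref{mild_growth} (namely $p>0$ for $d=1$, $p\in(0,2]$ for $d=2$, and $p\in(0,1]$ for $d=3$) guarantee this, recalling that $p_\alpha^*=\infty$ as soon as $2\geq n+\alpha$. This calibration between the growth of $f$ and the weighted Sobolev embedding is the only genuinely delicate point; the remaining manipulations are routine.
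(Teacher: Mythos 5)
Your proposal is correct and follows essentially the same route as the paper: a mean-value/Taylor representation of $f(t,v)-f(t,w)$ combined with the mild-growth bound, followed by Hölder's inequality and the weighted Sobolev embedding of Theorem \ref{embed_thm} with $\alpha=1$, with the dimension-dependent restrictions on $p$ checked case by case to make the exponents feasible, and the strip condition plus \eqref{regularity1} supplying the $R$-dependent bound on $\|v\|_{H^1(D_{\epsilon};\omega_{\epsilon})}$. The only cosmetic difference is that you use the integral form of the mean value theorem rather than the pointwise Lagrange form, which is if anything slightly cleaner.
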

\begin{proof}
	It follows from \eqref{regularity1} and \eqref{locally_eq2} that
	\begin{equation}
		\label{locally_eq3}
		\max\left\{\left\|v\right\|_{H^1(D_{\epsilon};\omega_{\epsilon})},\left\|w\right\|_{H^1(D_{\epsilon};\omega_{\epsilon})} \right\}\lesssim 1+R.
	\end{equation}
	By the Lagrange mean value theorem and \eqref{mild_growth_eq},
	\begin{equation*}
		\begin{aligned}
			&\left\|f(t,v)-f(t,w)\right\|_{L^2(D_{\epsilon};\omega_{\epsilon})}^2\\
			&\quad=\left\|\frac{\partial f}{\partial u}(t,\xi)(v-w)\right\|_{L^2(D_{\epsilon};\omega_{\epsilon})}^2\\
			&\quad\lesssim \int_{D_{\epsilon}} \left(1+|\xi|^p\right)^2|v-w|^2\omega_{\epsilon}\dd \bm x\\
			&\quad\leq \int_{D_{\epsilon}} \left(1+|v|^p\right)^2|v-w|^2\omega_{\epsilon}\dd\bm x+\int_{D_{\epsilon}}\left(1+|w|^p\right)^2|v-w|^2\omega_{\epsilon}\dd\bm x,
		\end{aligned}
	\end{equation*}
	where $\xi(\bm x)=\theta(\bm x)v(\bm x)+(1-\theta(\bm x))w(\bm x)$ for some $\theta(\bm x)\in [0,1]$. It's obvious that
	\begin{equation}
		\begin{aligned}
			\label{locally_eq4}
			\int_{D_{\epsilon}} \left(1+|v|^p\right)^2|v-w|^2\omega_{\epsilon}\dd\bm x=&\int_{D_{\epsilon}}|v|^{2p}|v-w|\omega_{\epsilon}\dd\bm x+2\int_{D_{\epsilon}}|v|^p|v-w|^2\omega_{\epsilon}\dd\bm x\\
			&+\int_{D_{\epsilon}}|v-w|^2\omega_{\epsilon}\dd\bm x.
		\end{aligned}
	\end{equation}
	
	It suffices to show below the bound of the first term in the right-hand side of \eqref{locally_eq4} since the bound of the other two terms can be derived similarly.
	
	Case I: $d=1$. In this case, $0<p<\infty$. We choose $q_1,q_2$ satisfying
	\begin{equation*}
		\frac{1}{q_1}+\frac{1}{q_2}=1,\quad 1\leq q_1<\infty,\quad 1\leq q_2<\infty, 2pq_1\geq 1.
	\end{equation*}
	By using Holder's inequality, embedding theorem for weighted Sobolev space (see Theorem \ref{embed_thm}) with $\alpha = 1$ and \eqref{locally_eq3}, we have
	\begin{equation}
		\begin{aligned}
			\label{locally_eq5}
			\int_{D_{\epsilon}}|v|^{2p}|v-w|^2\omega_{\epsilon}\dd\bm x &\leq\left(\int_{D_{\epsilon}}|v|^{2pq_1}\omega_{\epsilon}\dd\bm x\right)^{\frac{1}{q_1}}\left(\int_{D_{\epsilon}}|v-w|^{2q_2}\omega_{\epsilon}\dd\bm x\right)^{\frac{1}{q_2}}\\
			&=\left\|v\right\|_{L^{2pq_1}(D_{\epsilon};\omega_{\epsilon})}^{2p}\left\|v-w\right\|_{L^{2q_2}(D_{\epsilon};\omega_{\epsilon})}^2\\
			&\lesssim \|v\|_{H^1(D_{\epsilon};\omega_{\epsilon})}^{2p}\|v-w\|_{H^1(D_{\epsilon};\omega_{\epsilon})}^2\lesssim \|v-w\|_{H^1(D_{\epsilon};\omega_{\epsilon})}^2.
		\end{aligned}
	\end{equation}
	
	Case II: $d=2$. In this case, $0<p\leq 2$. According to the embedding theorem for weighted Sobolev space (see Theorem \ref{embed_thm}) with $\alpha=1$, we have $H^1(D_{\epsilon};\omega_{\epsilon})\hookrightarrow L^q(D_{\epsilon};\omega_{\epsilon})$, where $1\leq q\leq 6$. Select proper $q_1,q_2$ satisfying
	\begin{equation*}
		\frac{1}{q_1}+\frac{1}{q_2}=1,\quad 1\leq q_1<\infty,\quad 1\leq q_2\leq 3, \quad 1\leq 2pq_1\leq 6.
	\end{equation*}
	And with direct manipulation, we can know that these conditions hold if $(q_1,q_2)$ satisfies the conditions
	\begin{equation*}
		\max\left\{\frac 3 2,\frac{1}{2p} \right\}\leq q_1\leq \frac{3}{p},\quad 1 \leq q_2\leq 3.
	\end{equation*}
	Noting that $0<p\leq 2$, we can find the existence of such a pair $(q_1,q_2)$ in terms of the above conditions. Following the similar arguments for deriving \eqref{locally_eq5}, we have
	\begin{equation}
		\label{locally_eq6}
		\begin{aligned}
			\int_{D_{\epsilon}} |v|^{2p}|v-w|^2\omega_{\epsilon}\dd\bm x&\leq \left(\int_{D_{\epsilon}}|v-w|^{2q_2}\omega_{\epsilon}\dd\bm x\right)^{\frac{1}{q_2}}\left(\int_{D_{\epsilon}}|v|^{2pq_1}\omega_{\epsilon}\dd\bm x\right)^{\frac{1}{q_1}}\\
			&=\|v-w\|_{L^{2q_2}(D_{\epsilon};\omega_{\epsilon})}^2\|v\|_{L^{2pq_1}(D_{\epsilon};\omega_{\epsilon})}^{2p}\\
			&\lesssim\|v-w\|_{H^1(D_{\epsilon};\omega_{\epsilon})}^2\|v\|_{H^1(D_{\epsilon};\omega_{\epsilon})}^{2p}\lesssim \|v-w\|_{H^1(D_{\epsilon};\omega_{\epsilon})}^2.
		\end{aligned}
	\end{equation}
	
	Case III: $d=3$. In this case, $0<p\leq 1$. Following the similar derivation of Case II above, we have $H^1(D_{\epsilon};\omega)\hookrightarrow L^q(D_{\epsilon};\omega_{\epsilon})$, where $1\leq q \leq 4$. Select proper $q_1,q_2$ satisfying
	\begin{equation*}
		\frac{1}{q_1}+\frac{1}{q_2}=1,\quad \leq 1 \leq q_1<\infty,\quad 1\leq q_2\leq 2,\quad 1 \leq 2pq_1\leq 4.
	\end{equation*}
	It's obvious that these conditions hold if $(q_1,q_2)$ satisfies
	\begin{equation*}
		\max\left\{2,\frac{1}{2p} \right\}\leq q_1\leq \frac{2}{p},\quad 1 \leq q_2\leq 2.
	\end{equation*}
	Noting $0<p\leq 1$, we can always find a pair $(q_1,q_2)$ satisfies the above conditions and in the same way of \eqref{locally_eq6}
	\begin{equation}
		\label{locally_eq7}
		\int_{D_{\epsilon}}|v|^{2p}|v-w|^2\omega_{\epsilon}\dd\bm x\lesssim \|v-w\|_{H^1(D_{\epsilon};\omega_{\epsilon})}^2.
	\end{equation}
	
	Finally, the combination of \eqref{locally_eq4}-\eqref{locally_eq7} leads to \eqref{locally_eq}.

\end{proof}

\section{Error analysis}\label{theory}
To illustrate the convergence of diffuse domain method, let us combine the \eqref{eq2-2} and \eqref{weighted_variational} together, then we can get the error equation as follows: for all $v \in H^1(D_{\epsilon};\omega_{\epsilon})$,
\begin{equation}
	\label{eq4-1}
	\begin{split}
		&\left(u_t^{\epsilon}-u_t,v\right)_{D_{\epsilon};\omega_{\epsilon}}+a^{\epsilon}(u^{\epsilon}-u,v)\\
		&\quad=[\left(u_t,v\right)-\left(u_t,v\right)_{D_{\epsilon};\omega_{\epsilon}}]+[a(u,v)-a^{\epsilon}(u,v)]+[\ell^{\epsilon}(v)-\ell(v)].
	\end{split}
\end{equation}
Next we will derive the error estimates of $u^{\epsilon}-u$ in the $L^2$ and $H^1$ norms.

\subsection{Optimal error estimate in the $L^2$-norm}

\begin{theorem}[Error estimate in the $L^2$ norm]
	\label{thm3}
	Suppose that $0<\epsilon\leq \epsilon_0$,  $g\in H^2(D_{\epsilon};\omega_{\epsilon})$, $\kappa\leq A(\bm x)\leq \kappa^{-1}$ for all $\bm x \in D_{\epsilon}$ with some constant $\kappa>0$, and $f$ satisfies  Assumption \ref{mild_growth} and \ref{assumption1}. Assume the exact solution $u\in L^2(0,T;H^2(D))$ and its extension  to $D_\epsilon$ fulfills Assumption  \ref{regularity_assump}. Then we have
	\begin{equation}
		\label{thm3-1}
		\begin{split}
			\|u^{\epsilon}(t)-u(t)\|_{L^2(D_{\epsilon};\omega_{\epsilon})}\lesssim \epsilon^2, \quad \forall \,0\leq t \leq T,
		\end{split}
	\end{equation}
	where the hidden constant is independent of $\epsilon$.
\end{theorem}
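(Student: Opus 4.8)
The plan is to run a parabolic energy argument on the error equation \eqref{eq4-1}, testing with $v=e:=u^{\epsilon}-u$ and using the coercivity $a^{\epsilon}(e,e)\geq\kappa\|\nabla e\|_{L^2(D_{\epsilon};\omega_{\epsilon})}^2$ that follows from $A\geq\kappa$. The left-hand side then produces $\tfrac{1}{2}\frac{\dd}{\dd t}\|e\|_{L^2(D_{\epsilon};\omega_{\epsilon})}^2+\kappa\|\nabla e\|_{L^2(D_{\epsilon};\omega_{\epsilon})}^2$, and since $u^{\epsilon}(0)=\widetilde{u_0}$ gives $e(0)=0$, a Gronwall inequality will deliver the pointwise-in-time bound directly, with no need for a duality argument. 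The entire task is to show that the consistency functional on the right-hand side of \eqref{eq4-1}, call it $R(e)$, obeys $|R(e)|\lesssim\epsilon^2\|e\|_{H^1(D_{\epsilon};\omega_{\epsilon})}+(\text{absorbable terms})$; once this is secured, Young's inequality sends $\epsilon^2\|\nabla e\|$ into the coercive term and leaves $\frac{\dd}{\dd t}\|e\|^2\lesssim\|e\|^2+\epsilon^4$, whence $\|e(t)\|_{L^2(D_{\epsilon};\omega_{\epsilon})}\lesssim\epsilon^2$.

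I would first dispose of the nonlinear contribution $\int_{D_{\epsilon}}\big(f(u^{\epsilon})-f(u)\big)e\,\omega_{\epsilon}\dd\bm x$. By the local-Lipschitz estimate of Lemma \ref{locally} and Cauchy--Schwarz this is $\lesssim\|e\|_{H^1(D_{\epsilon};\omega_{\epsilon})}\|e\|_{L^2(D_{\epsilon};\omega_{\epsilon})}$, which Young's inequality splits into a harmless $\tfrac{\kappa}{4}\|\nabla e\|^2$ and a $C\|e\|^2$ feeding the Gronwall loop. A subtlety is that Lemma \ref{locally} applies only inside the strip $\|u^{\epsilon}-u\|_{H^1}\leq R$, so this step must be justified by a continuation/bootstrap argument: one assumes the bound on a maximal subinterval and shows the derived estimate keeps $\|e\|_{H^1}$ strictly below $R$ for $\epsilon$ small, which couples the present $L^2$ estimate to the companion $H^1$ estimate.

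The core is $R(e)$, which collects the mismatches between the exact integrals over $D$ (resp. $\partial D$) and their $\omega_{\epsilon}$-weighted diffuse counterparts over $D_{\epsilon}$: the time-derivative term $(u_t,e)-(u_t,e)_{D_{\epsilon};\omega_{\epsilon}}$, the diffusion term $a(u,e)-a^{\epsilon}(u,e)$, and the source/boundary term $\big[\int_{D_{\epsilon}}f(u)e\,\omega_{\epsilon}\dd\bm x-\int_D f(u)e\dd\bm x\big]+\big[\int_{D_{\epsilon}}ge|\nabla\omega_{\epsilon}|\dd\bm x-\int_{\partial D}ge\dd\sigma\big]$. For the volume mismatches I would apply Lemma \ref{lemma1} with the smooth factors $u_t$, $A\nabla u$, $f(u)$; each is essentially bounded on the thin tube $\Gamma_{\epsilon}$ (by Assumptions \ref{regularity_assump} and \ref{assumption1} and $\kappa\leq A\leq\kappa^{-1}$), so, since $|\Gamma_{\epsilon}|\lesssim\epsilon\,\mathcal{H}^{n-1}(\partial D)$, its weighted $L^2(\Gamma_{\epsilon};\omega_{\epsilon})$-norm is $O(\epsilon^{1/2})$, and the $\epsilon^{1/2}$ prefactor of Lemma \ref{lemma1} yields bounds of the form $\epsilon\|e\|_{L^2(\Gamma_{\epsilon};\omega_{\epsilon})}$ and $\epsilon\|\nabla e\|_{L^2(\Gamma_{\epsilon};\omega_{\epsilon})}$. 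The extra $\epsilon^{1/2}$ needed to reach the optimal $\epsilon^2$ is precisely the improvement over \cite{BurgerElvetun2017}: the transition function $S$ is symmetric, so its first moment $\int_{\mathbb{R}}\tau S'(\tau)\dd\tau$ vanishes because $S'$ is even, and the diffuse approximations \eqref{approx1}--\eqref{approx2} become second-order accurate in $\epsilon$. For the boundary term I would integrate by parts in $a^{\epsilon}(u,e)$ and invoke the strong form $u_t-\nabla\cdot(A\nabla u)=f(u)$ in $D$ with the Neumann condition $A\nabla u\cdot\bm n=g$ on $\partial D$; this regroups the diffusion and boundary mismatches into $\int_{\Gamma_{\epsilon}}\big[(A\nabla u)\cdot\bm n-g\big]\,e\,|\nabla\omega_{\epsilon}|\dd\bm x$ (plus a residual concentrated in the outer half of $\Gamma_{\epsilon}$, where $\omega_{\epsilon}$ is small), whose integrand vanishes on $\partial D$, so that Theorem \ref{thm2} becomes applicable and the optimal power is recovered once the vanishing first moment is used.

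The step I expect to be the main obstacle is exactly extracting the full order $\epsilon^2$ rather than the $\epsilon^{3/2}$ that a black-box use of Theorems \ref{thm1} and \ref{thm2} would give: one must combine the strip smallness of the fixed coefficients with the second-order accuracy of the symmetric weight and, for the boundary term, with the boundary-vanishing structure produced by the strong form, while checking that $A\nabla u\cdot\bm n-g$ carries enough normal-direction regularity (this is why $g\in H^2(D_{\epsilon};\omega_{\epsilon})$ and $u\in H^2$ are assumed) to justify the moment expansion. The secondary difficulty is the bookkeeping of the a priori $H^1$ bound required by Lemma \ref{locally}, which must be closed by the continuation argument noted above.
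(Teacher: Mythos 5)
Your overall skeleton is the paper's: test the error equation \eqref{eq4-1} with $v=u^{\epsilon}-u$, use coercivity from $A\geq\kappa$, handle the nonlinearity by Lemma \ref{locally} plus Young, regroup the flux and Neumann mismatches by integration by parts into $\int_{D_{\epsilon}}(\bm n\cdot A\nabla u-g)\,e\,|\nabla\omega_{\epsilon}|\dd\bm x$ so that Theorem \ref{thm2} applies (its hypothesis $w=0$ on $\partial D$ being exactly the Neumann condition), and close with Gronwall. Your remark that Lemma \ref{locally} only holds in a strip and needs a continuation argument coupling the $L^2$ and $H^1$ estimates is a legitimate point the paper glosses over.

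However, there is a genuine gap in how you propose to reach the order $\epsilon^2$ in the central consistency estimate. For the volume mismatches you invoke Lemma \ref{lemma1}, whose prefactor is only $\epsilon^{1/2}$; combined with the $O(\epsilon^{1/2})$ smallness of the fixed data on the tube $\Gamma_{\epsilon}$ this gives $R(e)\lesssim\epsilon\,\|e\|_{H^1(D_{\epsilon};\omega_{\epsilon})}$, which after Young and Gronwall closes only to $\|e\|_{L^2}\lesssim\epsilon$, a full order short. You then try to recover the deficit from a vanishing-first-moment argument for the symmetric profile $S$, claiming it supplies ``an extra $\epsilon^{1/2}$''; even granting that unproved claim, $\epsilon\cdot\epsilon^{1/2}=\epsilon^{3/2}$ still does not reach $\epsilon^2$, and no such quadrature-cancellation argument appears in, or is needed for, the paper's proof. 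The paper's actual mechanism is different and simpler: Theorems \ref{thm1} and \ref{thm2} already carry an $\epsilon^{3/2}$ prefactor \emph{with the data norm localized to $\Gamma_{\epsilon}$}, and it is precisely the combination $\epsilon^{3/2}\times O(\epsilon^{1/2})$ --- the second factor coming from $|\Gamma_{\epsilon}|\lesssim\epsilon$ together with the $W^{1,\infty}$/boundedness hypotheses on $u_t$, $\divo(A\nabla u)$, $f(u)$ and $\bm n\cdot A\nabla u-g$ in Assumptions \ref{assumption1} and \ref{regularity_assump} --- that yields $\epsilon^2\|e\|_{H^1(D_{\epsilon};\omega_{\epsilon})}$, hence $\epsilon^4$ after Young and $\|e\|_{L^2}\lesssim\epsilon^2$ after Gronwall. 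To repair your argument, replace Lemma \ref{lemma1} by Theorem \ref{thm1} for every volume mismatch and drop the moment expansion entirely.
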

\begin{proof}
	Let us analyze $\left(u_t,v\right)-\left(u_t,v\right)_{D_{\epsilon};\omega_{\epsilon}}$, $a(u,v)-a^{\epsilon}(u,v)$ and $\ell^{\epsilon}(v)-\ell(v)$ respectively for any $v \in H^1(D_{\epsilon};\omega_{\epsilon})$. 
	Recalling the definition of $a^{\epsilon}(\cdot,\cdot)$ and $a(\cdot,\cdot)$, since $\omega_{\epsilon}$ will vanish on  $\partial D_{\epsilon}$, we can derive the following formula by using the integration by part,
	{\small\begin{equation}\label{thm3-2}
			\begin{aligned}
				&a(u,v)-a^{\epsilon}(u,v)\\
				&\quad=\int_{D_{\epsilon}} \divo(A\nabla u)\omega_{\epsilon}v\dd\bm x+\int_{D_{\epsilon}} A\nabla u\nabla \omega_{\epsilon} v\dd \bm x+\int_{\partial D} \bm n\cdot \left(A\nabla u\right)v\dd \sigma-\int_D\divo(A\nabla u)v\dd \bm x\\
				&\quad=\int_{D_{\epsilon}}\divo\left(A\nabla u\right)\omega_{\epsilon} v\dd\bm x-\int_D\divo\left(A\nabla u\right)v\dd \bm x-\int_{D_{\epsilon}} \bm n\cdot A\nabla u\left|\nabla \omega_{\epsilon}\right|v\dd \bm x+\int_{\partial D} gv\dd \sigma,
			\end{aligned}
	\end{equation}}
	where in the last equality we use the fact that $\nabla \omega_{\epsilon}=-\bm n\left|\nabla \omega_{\epsilon}\right|$. By Theorem \ref{thm1}, we get
	\begin{align}
		\label{thm3-3}
		&\left|\int_D \divo(A\nabla u)v\dd \bm x-\int_{D_{\epsilon}} \divo(A\nabla u)v\omega_{\epsilon}\dd \bm x\right|
		\lesssim \epsilon^{\frac 3 2}\left\|\divo(A\nabla u)\right\|_{H^1(\Gamma_{\epsilon};\omega_{\epsilon})}\|v\|_{H^1(\Gamma_{\epsilon};\omega_{\epsilon})}.
	\end{align}
	Then, inserting \eqref{thm3-3} to \eqref{thm3-2}, we get that
	\begin{equation}
		\label{thm3-4}
		\begin{aligned}
			a(u,v)-a^{\epsilon}(u,v)\lesssim& \epsilon^{\frac 3 2}\left\|\divo(A\nabla u)\right\|_{H^1(\Gamma_{\epsilon};\omega_{\epsilon})}\|v\|_{H^1(\Gamma_{\epsilon};\omega_{\epsilon})}\\
			&-\int_{D_{\epsilon}} \bm n\cdot A\nabla u\left|\nabla \omega_{\epsilon}\right|v\dd \bm x+\int_{\partial D} gv\dd \sigma.
		\end{aligned}
	\end{equation}
	As for the estimate of $\left(u_t,v\right)-\left(u_t,v\right)_{D_{\epsilon};\omega_{\epsilon}}$, by using the conclusion of Theorem \ref{thm1}, we obtain
	\begin{equation}
		\label{thm3-5}
		\left(u_t,v\right)-\left(u_t,v\right)_{D_{\epsilon};\omega_{\epsilon}}\lesssim \epsilon^{\frac 3 2}\|u_t\|_{H^1(\Gamma_{\epsilon};\omega_{\epsilon})}\|v\|_{H^1(\Gamma_{\epsilon};\omega_{\epsilon})}.
	\end{equation}
	
	In the following, we first discuss the estimate of $\int_{D_{\epsilon}}f(u^{\epsilon})v\omega_{\epsilon}\dd\bm x-\int_D f(u)v\dd\bm x$. Since $f$ is locally-Lipschitz continuous (see Lemma \ref{locally}), we can derive
	\begin{equation}
		\label{thm3-6}
		\begin{aligned}
			&\int_{D_{\epsilon}} f(u^{\epsilon})v\omega_{\epsilon}\dd\bm x - \int_D f(u)v\dd\bm x\\
			&\quad=\int_{D_{\epsilon}} f(u^{\epsilon})v\omega_{\epsilon}\dd\bm x-\int_{D_{\epsilon}} f(u)v\omega_{\epsilon}\dd\bm x+\int_{D_{\epsilon}}f(u)v\omega_{\epsilon}\dd\bm x-\int_D f(u)v\dd\bm x\\
			&\quad\lesssim \left\|f(u^{\epsilon})-f(u)\right\|_{L^2(D_{\epsilon};\omega_{\epsilon})}\|v\|_{L^2(D_{\epsilon};\omega_{\epsilon})}+\epsilon^{\frac 3 2}\left\|f(u)\right\|_{H^1(\Gamma_{\epsilon};\omega_{\epsilon})}\|v\|_{H^1(\Gamma_{\epsilon};\omega_{\epsilon})}\\
			&\quad\lesssim\|u^{\epsilon}-u\|_{H^1(D_{\epsilon};\omega_{\epsilon})}\|v\|_{L^2(D_{\epsilon};\omega_{\epsilon})}+\epsilon^{\frac 3 2}\left\|f(u)\right\|_{H^1(\Gamma_{\epsilon};\omega_{\epsilon})}\|v\|_{H^1(\Gamma_{\epsilon};\omega_{\epsilon})}
		\end{aligned}
	\end{equation}
	where the last two inequality uses the conclusion of Theorem \ref{thm1}.
	
	With the estimate of \eqref{thm3-6}, we can derive the bound of $\ell^{\epsilon}(v)-\ell(v)$, 
	\begin{equation}
		\label{thm3-7}
		\begin{aligned}
			\ell^{\epsilon}(v)-\ell(v)=& \int_{D_{\epsilon}}f(u^{\epsilon})v\omega_{\epsilon}\dd \bm x-\int_D f(u)v\dd \bm x+\int_{D_{\epsilon}}gv\left|\nabla\omega_{\epsilon}\right|\dd \bm x-\int_{\partial D}gv\dd \sigma\\
			\lesssim& \|u^{\epsilon}-u\|_{H^1(D_{\epsilon};\omega_{\epsilon})}\|v\|_{L^2(D_{\epsilon};\omega_{\epsilon})}+\epsilon^{\frac 3 2}\left\|f(u)\right\|_{H^1(\Gamma_{\epsilon};\omega_{\epsilon})}\|v\|_{H^1(\Gamma_{\epsilon};\omega_{\epsilon})}\\
			&+\int_{D_{\epsilon}} gv\left|\nabla\omega_{\epsilon}\right|\dd \bm x-\int_{\partial D}gv\dd \sigma.
		\end{aligned}
	\end{equation}

	Combining \eqref{thm3-4}-\eqref{thm3-7} together, we get
	\begin{equation}
		\label{thm3-8}
		\begin{aligned}
			&\left(u_t^{\epsilon}-u_t,v\right)_{D_{\epsilon};\omega_{\epsilon}}+a^{\epsilon}(u^{\epsilon}-u,v)\\
			&\quad=\left(u_t,v\right)-\left(u_t,v\right)_{D_{\epsilon};\omega_{\epsilon}}+a(u,v)-a^{\epsilon}(u,v)+\ell^{\epsilon}(v)-\ell(v)\\	
			&\quad\lesssim\epsilon^{\frac 3 2}\|u_t\|_{H^1(\Gamma_{\epsilon};\omega_{\epsilon})}\|v\|_{H^1(\Gamma_{\epsilon};\omega_{\epsilon})}+\epsilon^{\frac 3 2}\left\|\divo\left(A\nabla u\right)\right\|_{H^1(\Gamma_{\epsilon};\omega_{\epsilon})}\|v\|_{H^1(\Gamma_{\epsilon};\omega_{\epsilon})}\\
			&\qquad-\int_{D_{\epsilon}} \bm n\cdot A\nabla u\left|\nabla\omega_{\epsilon}\right|v\dd\bm x+\left\|u^{\epsilon}-u\right\|_{H^1(D_{\epsilon};\omega_{\epsilon})}\|v\|_{L^2(D_{\epsilon};\omega_{\epsilon})}\\
			&\qquad+\epsilon^{\frac 3 2}\|f(u)\|_{H^1(\Gamma_{\epsilon};\omega_{\epsilon})}\|v\|_{H^1(\Gamma_{\epsilon};\omega_{\epsilon})}+\int_{D_{\epsilon}} gv|\nabla\omega_{\epsilon}|\dd \bm x.
		\end{aligned}
	\end{equation}
	Applying with Theorem \ref{thm2}, we know that
	\begin{equation}\label{thm3-9}
		\begin{aligned}
			\int_{D_{\epsilon}} gv|\nabla\omega_{\epsilon}|\dd \bm x-\int_{D_{\epsilon}}\bm n\cdot A\nabla uv|\nabla\omega_{\epsilon}|\dd \bm x
			&=\int_{D_{\epsilon}}\left(\bm n\cdot A\nabla u-g \right)v|\nabla \omega_{\epsilon}|\dd \bm x\notag\\
			&\lesssim\epsilon^{\frac 3 2}\|\bm n\cdot A\nabla u-g\|_{H^2(\Gamma_{\epsilon};\omega_{\epsilon})}\|v\|_{H^1(D_{\epsilon};\omega_{\epsilon})}.
		\end{aligned}
	\end{equation}
	Therefore, we have by  \eqref{thm3-7}   and \eqref{thm3-8} that
	\begin{equation}\label{thm3-10}
		\begin{aligned}
			&\left(u_t^{\epsilon}-u_t,v\right)_{D_{\epsilon};\omega_{\epsilon}}+a^{\epsilon}(u^{\epsilon}-u,v)\\
			&\quad\lesssim\epsilon^{\frac 3 2}\|u_t\|_{H^1(\Gamma_{\epsilon};\omega_{\epsilon})}\|v\|_{H^1(\Gamma_{\epsilon};\omega_{\epsilon})}+\epsilon^{\frac 3 2}\left\|\divo\left(A\nabla u\right)\right\|_{H^1(\Gamma_{\epsilon};\omega_{\epsilon})}\|v\|_{H^1(\Gamma_{\epsilon};\omega_{\epsilon})}\\
			&\qquad+\epsilon^{\frac 3 2}\|\bm n\cdot A\nabla u-g\|_{H^2(\Gamma_{\epsilon};\omega_{\epsilon})}\|v\|_{H^1(D_{\epsilon};\omega_{\epsilon})}+\epsilon^{\frac 3 2}\|f(u)\|_{H^1(\Gamma_{\epsilon};\omega_{\epsilon})}\|v\|_{H^1(D_{\epsilon};\omega_{\epsilon})}\\
			&\qquad +\left\|u^{\epsilon}-u\right\|_{H^1(D_{\epsilon};\omega_{\epsilon})}\|v\|_{L^2(D_{\epsilon};\omega_{\epsilon})}.
		\end{aligned}
	\end{equation}
	
	Since $|\Gamma_{\epsilon}|\lesssim \epsilon$, applying $v=u^{\epsilon}-u$ to \eqref{thm3-9} gives
	\begin{equation}
		\begin{aligned}
			\label{thm3-11}
			&\frac 1 2\frac{\dd}{\dd t}\|u^{\epsilon}-u\|_{L^2(D_{\epsilon};\omega_{\epsilon})}^2+\|\nabla(u^{\epsilon}-u)\|_{L^2(D_{\epsilon};\omega_{\epsilon})}^2\\
			&\quad\lesssim \epsilon^{\frac 3 2}\Big(\|u_t\|_{H^1(\Gamma_{\epsilon};\omega_{\epsilon})}+\|\divo(A\nabla u)\|_{H^1(\Gamma_{\epsilon};\omega_{\epsilon})}+\|\bm n\cdot A\nabla u-g\|_{H^2(\Gamma_{\epsilon};\omega_{\epsilon})}\\
			&\qquad+\|f(u)\|_{H^1(\Gamma_{\epsilon};\omega_{\epsilon})}+\left\|u^{\epsilon}-u\right\|_{L^2(D_{\epsilon};\omega_{\epsilon})}\Big)\|u^{\epsilon}-u\|_{H^1(\Gamma_{\epsilon};\omega_{\epsilon})}\\
			&\quad\lesssim  \epsilon^4 +\|u^{\epsilon}-u\|_{L^2(D_{\epsilon};\omega_{\epsilon})}^2+\|\nabla(u^{\epsilon}-u)\|_{L^2(D_{\epsilon};\omega_{\epsilon})}^2.
		\end{aligned}
	\end{equation}
	Thus
	\begin{equation*}
		\frac 1 2\frac{\dd}{\dd t}\|u^{\epsilon}-u\|_{L^2(D_{\epsilon};\omega_{\epsilon})}^2 \lesssim \epsilon^4+\|u^{\epsilon}-u\|_{L^2(D_{\epsilon};\omega_{\epsilon})}^2,
	\end{equation*}
	which implies 
	\begin{equation*}
		\|u^{\epsilon}-u\|_{L^2(D_{\epsilon};\omega_{\epsilon})}\lesssim \epsilon^2.
	\end{equation*}
	The proof is completed.
	
\end{proof}

\subsection{Optimal error estimate in the $H^1$ norm}

The derivation of following lemma has been presented in Lemma 4.2 in \cite{HaoJu2025}.

\begin{lemma}
	\label{lemma3}
	Suppose that $0<\epsilon\leq \epsilon_0$,  $g\in H^1(D_{\epsilon};\omega_{\epsilon}) $, and $f$ satisfies  Assumption \ref{assumption1}. Assume the exact solution $u\in H^2(D)$ for all $0\leq t\leq T$ and its extension  to $D_\epsilon$ fulfills Assumption  \ref{regularity_assump}. Then we have
	\begin{equation}
		\label{lemma3-1}
		\int_{\partial D}g(u^{\epsilon}-u)\dd \sigma+\int_{D_{\epsilon}} g\nabla\omega_{\epsilon}(u^{\epsilon}-u)\dd \bm x\leq C\epsilon^2+\frac 1 4\|\nabla u^{\epsilon}-\nabla u\|_{L^2(\Gamma_{\epsilon};\omega_{\epsilon})}^2,
	\end{equation}
	where $C$ is a constant independent of $\epsilon$.
\end{lemma}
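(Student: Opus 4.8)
The plan is to read the left-hand side of \eqref{lemma3-1} as the error committed when the exact boundary flux integral $\int_{\partial D} g(u^{\epsilon}-u)\dd\sigma$ is replaced by its diffuse-domain surrogate built from $|\nabla\omega_{\epsilon}|$, and to show this error is $O(\epsilon^2)$ once a controllable multiple of $\|\nabla(u^{\epsilon}-u)\|_{L^2(\Gamma_{\epsilon};\omega_{\epsilon})}^2$ is set aside. Throughout I abbreviate $e:=u^{\epsilon}-u$. Both contributions are supported on the tubular neighborhood $\Gamma_{\epsilon}$, since $\nabla\omega_{\epsilon}$ vanishes off $\Gamma_{\epsilon}$, so every estimate will be localized to $\Gamma_{\epsilon}$ and will exploit $|\Gamma_{\epsilon}|\lesssim\epsilon$ together with the hypotheses $g\in H^1(D_{\epsilon};\omega_{\epsilon})$, $u\in H^2(D)$ and Assumption \ref{regularity_assump}.

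First I would pass to the normal coordinates adapted to $\partial D$, writing each $\bm z\in\Gamma_{\epsilon}$ uniquely as $\bm z=\bm x+s\bm n(\bm x)$ with $\bm x\in\partial D$ and $|s|<\epsilon$, exactly as in the proofs of Theorems \ref{thm1} and \ref{thm2}. Applying the co-area formula and Fubini's theorem turns $\int_{D_{\epsilon}}g\,e\,|\nabla\omega_{\epsilon}|\dd\bm x$ into an iterated integral over $\partial D$ and over $s\in(-\epsilon,\epsilon)$, with the one-dimensional weight $|\nabla\omega_{\epsilon}|(\bm x+s\bm n)$ and a Jacobian $J(\bm x,s)=1+O(s)$ coming from the curvature of $\partial D$. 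Because $\omega_{\epsilon}$ transits monotonically across $\Gamma_{\epsilon}$, the inner weight integrates to $1$ up to an $O(\epsilon)$ curvature correction, which is precisely what produces the leading-order cancellation against $\int_{\partial D}g\,e\,\dd\sigma$.

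The remainder is then controlled by a first-order Taylor expansion of $g\,e$ in the normal variable, so the surviving terms are $O(\epsilon)$ multiples of the normal derivative $\partial_n(ge)=g\,\partial_n e+e\,\partial_n g$ integrated against $|\nabla\omega_{\epsilon}|$. A weighted Cauchy--Schwarz inequality splits these into a factor carrying $\nabla e$ and a factor carrying $g$, $\nabla g$ and $e$; the latter is bounded by $1$ using $g\in H^1(D_{\epsilon};\omega_{\epsilon})$, the $H^2$ bound on $u$ and the trace estimate of Theorem \ref{trace_thm}, while $|\Gamma_{\epsilon}|\lesssim\epsilon$ supplies the extra powers of $\epsilon$. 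At this stage the dangerous cross term has the shape $C\epsilon\,\|\nabla e\|_{L^2(\Gamma_{\epsilon};\omega_{\epsilon})}$, and Young's inequality $C\epsilon\,a\le\tfrac14 a^2+C^2\epsilon^2$ with $a=\|\nabla e\|_{L^2(\Gamma_{\epsilon};\omega_{\epsilon})}$ delivers exactly the stated right-hand side $C\epsilon^2+\tfrac14\|\nabla(u^{\epsilon}-u)\|_{L^2(\Gamma_{\epsilon};\omega_{\epsilon})}^2$.

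The main obstacle I anticipate is that neither $g$ nor $e$ vanishes on $\partial D$, so Theorem \ref{thm2} cannot be invoked verbatim; the gain must instead come from the genuine cancellation between the exact surface integral and its diffuse counterpart, and tracking this cancellation sharply enough to reach the full order $\epsilon^2$ (rather than the $\epsilon^{3/2}$ of a crude bound) is the delicate point. A convenient way to organize it is to use the Neumann condition $g=\bm n\cdot A\nabla u$ on $\partial D$ to replace $g$ on the surface, convert $\int_{\partial D}(\bm n\cdot A\nabla u)e\dd\sigma$ into volume integrals over $D$ by the divergence theorem, and match them against the corresponding weighted volume integrals over $D_{\epsilon}$ through Theorem \ref{thm1} and Lemma \ref{lemma1}; each such matching costs $O(\epsilon^2)$, and the only leftover that cannot be integrated by parts is the $g\,\partial_n e$ contribution absorbed by Young's inequality. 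The remaining bookkeeping is to verify that all curvature and Jacobian corrections are uniformly $O(\epsilon)$ in the weighted norms, so that the hidden constants stay independent of $\epsilon$.
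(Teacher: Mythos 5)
The paper itself does not prove this lemma --- it imports it verbatim from Lemma 4.2 of the reference \cite{HaoJu2025} --- so there is no in-paper argument to compare against and your proposal has to stand on its own. Your overall architecture (normal coordinates via the co-area formula, cancellation of the zeroth-order term between the sharp and diffuse surface integrals, a Taylor remainder in the normal variable, and Young's inequality to park the $\nabla e$ contribution in the $\tfrac14\|\nabla e\|^2$ budget) is the standard and correct skeleton for this kind of consistency estimate. The problem is the one quantitative claim on which the conclusion hinges: you assert that the dangerous cross term has the form $C\epsilon\,\|\nabla e\|_{L^2(\Gamma_{\epsilon};\omega_{\epsilon})}$, but the steps you describe only deliver $C\epsilon^{1/2}\|\nabla e\|_{L^2(\Gamma_{\epsilon};\omega_{\epsilon})}$. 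Concretely, the remainder is $\int_{\partial D}\int_{-\epsilon}^{\epsilon}|\nabla\omega_{\epsilon}|(s)\int_0^s g\,\partial_n e(\bm x+\tau\bm n)\,\dd\tau\,\dd s\,\dd\sigma$; swapping the $s$ and $\tau$ integrations and using $\int_{\tau}^{\epsilon}|\nabla\omega_{\epsilon}|\,\dd s\lesssim\omega_{\epsilon}(\tau)$ turns this into $\lesssim\|g\|_{L^{\infty}}\int_{\Gamma_{\epsilon}}|\nabla e|\,\omega_{\epsilon}\,\dd\bm x\lesssim|\Gamma_{\epsilon}|^{1/2}\|\nabla e\|_{L^2(\Gamma_{\epsilon};\omega_{\epsilon})}\lesssim\epsilon^{1/2}\|\nabla e\|_{L^2(\Gamma_{\epsilon};\omega_{\epsilon})}$, and Young's inequality then yields $C\epsilon+\tfrac14\|\nabla e\|^2$, a full order short of the stated $C\epsilon^2$. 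The extra power of $\epsilon$ cannot come from pushing the Taylor expansion to second order, because the odd-symmetry cancellation $\int_{-\epsilon}^{\epsilon}s\,|\nabla\omega_{\epsilon}|(s)\,\dd s=0$ only helps for the terms you can expand twice, and $\partial_n^2 e$ involves second derivatives of $u^{\epsilon}$, for which no bound is available. So the mechanism that produces $\epsilon^2$ rather than $\epsilon$ is exactly the point left unproved.

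A second, smaller gap: for the companion term $e\,\partial_n g$ you claim a bound "by $1$" using $g\in H^1(D_{\epsilon};\omega_{\epsilon})$, the $H^2$ regularity of $u$ and Theorem \ref{trace_thm}, but those hypotheses control $g$ and $u$, not $e=u^{\epsilon}-u$. After converting $|\nabla\omega_{\epsilon}|\lesssim\epsilon^{-1}\omega_{\epsilon}$, this term is $\lesssim\|e\|_{L^2(\Gamma_{\epsilon};\omega_{\epsilon})}\|\nabla g\|_{L^2(\Gamma_{\epsilon};\omega_{\epsilon})}$, and making it $O(\epsilon^2)$ requires invoking the already-established $L^2$ error estimate of Theorem \ref{thm3}, $\|e\|_{L^2(D_{\epsilon};\omega_{\epsilon})}\lesssim\epsilon^2$ --- which is legitimate at this point in the paper but which you never cite. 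You should make that dependence explicit, and you should also confront the weight mismatch you pass over: bounds obtained from the co-area formula naturally produce the unweighted norm $\|\nabla e\|_{L^2(\Gamma_{\epsilon})}$, which does \emph{not} control and is not controlled by $\|\nabla e\|_{L^2(\Gamma_{\epsilon};\omega_{\epsilon})}$ near the outer edge of $\Gamma_{\epsilon}$ where $\omega_{\epsilon}\to 0$; the Fubini rearrangement above is one way to land in the weighted norm, but it has to be done deliberately.
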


\begin{theorem}[Error estimate in the $H^1$-norm]
	\label{thm5}
	Suppose that $0<\epsilon\leq \epsilon_0$,  $g\in L^{\infty}(D_{\epsilon})\cap H^2(D_{\epsilon};\omega_{\epsilon})$, $\kappa\leq A(\bm x)\leq \kappa^{-1}$ for all $\bm x \in D$ with some constant $\kappa>0$, and $f$ satisfies  Assumption \ref{mild_growth} and \ref{assumption1}. Assume the exact solution $u\in H^2(D_{\epsilon})$ for all $0\leq t \leq T$ and its extension to $D_\epsilon$ fulfills Assumption  \ref{regularity_assump}. Then we have
	\begin{equation}
		\label{thm5-1}
		\|u^{\epsilon}(t)-u(t)\|_{H^1(D_{\epsilon};\omega_{\epsilon})}\lesssim \epsilon, \quad \forall\ 0 \leq t\leq T,
	\end{equation}
	where the hidden constant is independent of $\epsilon$.
	
\end{theorem}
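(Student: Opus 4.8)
The plan is to upgrade the $L^2$ bound of Theorem \ref{thm3} to a pointwise-in-time gradient bound by testing the error equation \eqref{eq4-1} with the time derivative of the error rather than with the error itself. Writing $e=u^{\epsilon}-u$, I would set $v=e_t=u_t^{\epsilon}-u_t$ in \eqref{eq4-1}. Since $A=A(\bm x)$ is time-independent, the bilinear term becomes $a^{\epsilon}(e,e_t)=\tfrac12\frac{\dd}{\dd t}a^{\epsilon}(e,e)$, so after integrating over $[0,t]$ the left-hand side reads
\begin{equation*}
	\int_0^t\|e_t\|_{L^2(D_{\epsilon};\omega_{\epsilon})}^2\dd s+\tfrac12 a^{\epsilon}(e(t),e(t))-\tfrac12 a^{\epsilon}(e(0),e(0)),
\end{equation*}
and coercivity $a^{\epsilon}(e,e)\geq\kappa\|\nabla e\|_{L^2(D_{\epsilon};\omega_{\epsilon})}^2$ then furnishes \emph{pointwise} control of $\|\nabla e(t)\|$, which is exactly what \eqref{thm5-1} requires. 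Because the initial data of \eqref{weighted_variational} and \eqref{eq2-2} share the same reflection extension $\widetilde{u_0}$, one has $e(0)=0$ and the last term drops.

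The right-hand side of \eqref{eq4-1} must then be estimated against $v=e_t$, and the decisive point is to avoid any norm of $\nabla e_t$, which is uncontrolled. For the three volume-difference contributions (from $u_t$, from $\divo(A\nabla u)$ after the integration by parts in \eqref{thm3-2}, and from $f(u)$ in \eqref{thm3-6}) I would therefore invoke Lemma \ref{lemma1} rather than Theorem \ref{thm1}, since Lemma \ref{lemma1} measures the discrepancy only in weighted $L^2$ norms on $\Gamma_{\epsilon}$. Using the PDE identity $\divo(A\nabla u)=u_t-f(u)$ together with Assumption \ref{regularity_assump} and the embedding $H^2\hookrightarrow L^{\infty}$ (valid for $d\leq 3$) gives $L^{\infty}$ bounds on each weight $w\in\{u_t,\divo(A\nabla u),f(u)\}$, whence $\|w\|_{L^2(\Gamma_{\epsilon};\omega_{\epsilon})}\lesssim\epsilon^{1/2}$ because $|\Gamma_{\epsilon}|\lesssim\epsilon$; each such term is thus $\lesssim\epsilon\|e_t\|_{L^2(\Gamma_{\epsilon};\omega_{\epsilon})}$ and, by Young's inequality, contributes $\lesssim\epsilon^2+\eta\int_0^t\|e_t\|^2$. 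The locally-Lipschitz part of the nonlinearity (Lemma \ref{locally}) yields $\|e\|_{H^1}\|e_t\|_{L^2}$, which Young splits into an absorbable $\eta\|e_t\|^2$ plus $\|e\|_{H^1}^2=\|e\|_{L^2}^2+\|\nabla e\|^2$, where $\|e\|_{L^2}^2\lesssim\epsilon^4$ by Theorem \ref{thm3} and the $\|\nabla e\|^2$ term is reserved for Gr\"onwall.

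The heart of the argument is the combined boundary-and-weight term. Collecting the boundary contributions of $a-a^{\epsilon}$ and of $\ell^{\epsilon}-\ell$, the two surface integrals $\int_{\partial D}gv\dd\sigma$ cancel and leave $\int_{D_{\epsilon}}(g-\bm n\cdot A\nabla u)|\nabla\omega_{\epsilon}|\,e_t\dd\bm x$. This cannot be bounded directly against $e_t$, because $|\nabla\omega_{\epsilon}|$ concentrates on $\Gamma_{\epsilon}$ and a trace estimate would again produce $\nabla e_t$. I would instead integrate by parts in time: the running boundary term at time $t$ is $\int_{D_{\epsilon}}(g-\bm n\cdot A\nabla u)|\nabla\omega_{\epsilon}|\,e\dd\bm x$, precisely the object controlled by Lemma \ref{lemma3} (using the Neumann condition $g=\bm n\cdot A\nabla u$ on $\partial D$), which returns $C\epsilon^2+\tfrac14\|\nabla e(t)\|_{L^2(\Gamma_{\epsilon};\omega_{\epsilon})}^2$; the gradient piece is absorbed into $\tfrac{\kappa}{2}\|\nabla e(t)\|^2$ on the left. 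The remaining time integral carries $\partial_t(g-\bm n\cdot A\nabla u)=g_t-\bm n\cdot A\nabla u_t$, which still vanishes on $\partial D$ upon differentiating the boundary condition in $t$, so Theorem \ref{thm2} applies and, combined with $\|e\|_{L^2}\lesssim\epsilon^2$, keeps this piece of order $\epsilon^2$ plus a multiple of $\int_0^t\|\nabla e\|^2$.

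Finally, choosing $\eta$ small to absorb all $\|e_t\|^2$ terms and the $\Gamma_{\epsilon}$-gradient term into the left-hand side leaves an inequality of the form $\|\nabla e(t)\|^2\lesssim\epsilon^2+\int_0^t\|\nabla e(s)\|^2\dd s$; Gr\"onwall's inequality then gives $\|\nabla e(t)\|\lesssim\epsilon$ uniformly in $t$, and together with $\|e(t)\|_{L^2}\lesssim\epsilon^2\lesssim\epsilon$ from Theorem \ref{thm3} this yields \eqref{thm5-1}. I expect the main obstacle to be exactly this boundary-and-weight term: making the time integration by parts rigorous requires enough time-regularity of $g$ and $u$ to control $\partial_t(g-\bm n\cdot A\nabla u)$ in $H^2(\Gamma_{\epsilon};\omega_{\epsilon})$, and one must check that the constant $\tfrac14$ produced by Lemma \ref{lemma3} is genuinely dominated by the coercivity constant so that the $\Gamma_{\epsilon}$-gradient term can indeed be absorbed on the left.
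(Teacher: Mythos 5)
Your proposal follows the same overall skeleton as the paper's proof — test the error equation \eqref{eq4-1} with $v=u_t^{\epsilon}-u_t$, exploit $a^{\epsilon}(e,e_t)=\tfrac12\tfrac{\dd}{\dd t}a^{\epsilon}(e,e)$ with coercivity to get pointwise-in-time control of $\|\nabla e\|$, estimate the volume discrepancies via Lemma \ref{lemma1}, the nonlinearity via Lemma \ref{locally}, and close with Gr\"onwall — but the two most delicate estimates are handled by genuinely different means. For the combined boundary-and-weight term $\int_{D_{\epsilon}}(g-\bm n\cdot A\nabla u)\,e_t\,|\nabla\omega_{\epsilon}|\dd\bm x$, the paper (see \eqref{thm5-6}) applies a weighted Cauchy--Schwarz directly against $e_t$ and invokes an $L^{\infty}$ bound on $g-\bm n\cdot A\nabla u$ together with an a priori bound $\sup_t\|u_t^{\epsilon}-u_t\|_{L^2(D_{\epsilon};\omega_{\epsilon})}\lesssim\epsilon^2$; you instead integrate by parts in time, control the resulting boundary term in $e$ via Lemma \ref{lemma3} (or Theorem \ref{thm2}, since $g-\bm n\cdot A\nabla u$ vanishes on $\partial D$), and put the time-differentiated remainder under Gr\"onwall. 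Relatedly, the paper's treatment of $\mathrm{I}_1$, $\mathrm{II}_1$ and $\mathrm{I}_3$ leans repeatedly on the claim that Theorem \ref{thm3} transfers to the time-differentiated problem ("$u_t^{\epsilon}$ and $u_t$ have the same regularity"), which is asserted rather than proved; your Young-inequality absorption of all $\|e_t\|_{L^2}^2$ contributions into the left-hand side avoids any a priori estimate on $e_t$ and is in that respect more self-contained. The price you pay is exactly the one you flag: the time integration by parts requires $\partial_t(g-\bm n\cdot A\nabla u)\in H^2(\Gamma_{\epsilon};\omega_{\epsilon})$, which exceeds the regularity granted by Assumption \ref{regularity_assump} ($u_t\in W^{1,\infty}$ only), and the fixed constant $\tfrac14$ in Lemma \ref{lemma3} must be compatible with the coercivity constant $\kappa$ (or Lemma \ref{lemma3} must be re-derived with an adjustable Young parameter). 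Neither gap is fatal — the first can be added as a hypothesis, the second is cosmetic — and the paper's own route has comparable unstated assumptions, so your argument is an acceptable, arguably cleaner, alternative.
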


\begin{proof}
	First, we insert $v=2(u^{\epsilon}_t-u_t)$ into \eqref{eq4-1}, we have
	\begin{equation}
		\begin{aligned}
			\label{thm5-2}
			&2\left(u_t^{\epsilon}-u_t,u_t^{\epsilon}-u_t\right)_{D_{\epsilon},\omega_{\epsilon}}+2a^{\epsilon}(u^{\epsilon}-u,u_t^{\epsilon}-u_t)\\
			&\quad = \left(2\left(u_t,u_t^{\epsilon}-u_t\right)-2\left(u_t,u_t^{\epsilon}-u_t\right)_{D_{\epsilon};\omega_{\epsilon}}\right)+\left(2a(u,u_t^{\epsilon}-u_t)-2a^{\epsilon}(u,u_t^{\epsilon}-u_t)\right)\\
			&\quad \quad +\left(2\int_{D_{\epsilon}} f(u^{\epsilon})\left(u^{\epsilon}_t-u_t\right)\omega_{\epsilon}\dd\bm x-2\int_D f(u)\left(u_t^{\epsilon}-u_t\right)\dd\bm x\right)\\
			&\quad\quad+\left(2\int_{D_{\epsilon}}g(u_t^{\epsilon}-u_t)\left|\nabla\omega_{\epsilon}\right|\dd \bm x-2\int_{\partial D}g(u_t^{\epsilon}-u_t)\dd\sigma \right)=:\rm{I}_1+\rm{I}_2+\rm{I}_3+\rm{I}_4 .
		\end{aligned} 
	\end{equation}
	
	Since $u^{\epsilon}_t$ and $u_t$, as well as $u^{\epsilon}$ and $u$, have the same regularity, according to Lemma \ref{lemma1} and $L^2$-norm error estimate of diffuse domain method (see Theorem \ref{thm3}), we can easily obtain that
	\begin{equation}
		\begin{aligned}
			\label{thm5-3}
			\rm{I}_1 &\lesssim \epsilon^{\frac 1 2}\left\|u_t^{\epsilon}-u_t\right\|_{L^2(\Gamma_{\epsilon};\omega_{\epsilon})}\left\|u_t\right\|_{L^2(\Gamma_{\epsilon};\omega_{\epsilon})}\lesssim \epsilon^3.
		\end{aligned}
	\end{equation}
	
	Since the weighted function $\omega_{\epsilon}$ will vanish on the boundary of $D_{\epsilon}$, we can reformulate the term $\rm{I}_2$ with the variational formula, 
	\begin{equation*}
		\begin{aligned}
			\label{thm5-4}
			\rm{I}_2 =& 2\int_{\partial D} \bm{n}\cdot A\nabla u\left(u_t^{\epsilon}-u_t\right)\dd\sigma - 2\int_D\divo(A\nabla u)\left(u_t^{\epsilon}-u_t\right)\dd\bm x\\
			&-2\left(\int_{\partial D_{\epsilon}}\bm{n}\cdot A\nabla u\omega_{\epsilon}\left(u_t^{\epsilon}-u_t\right)\dd\sigma-\int_{D_{\epsilon}}\divo(A\nabla u\omega_{\epsilon})\left(u_t^{\epsilon}-u_t\right)\dd\bm x \right)\\
			=&\left(2\int_{D_{\epsilon}}\divo(A\nabla u)\left(u_t^{\epsilon}-u_t\right)\omega_{\epsilon}\dd\bm x-2\int_D\divo(A\nabla u)\left(u_t^{\epsilon}-u_t\right)\dd\bm x \right)\\
			&+\left(2\int_{\partial D} g\left(u_t^{\epsilon}-u_t\right)\dd\sigma+2\int_{D_{\epsilon}}A\nabla u\nabla\omega_{\epsilon}\left(u_t^{\epsilon}-u_t\right)\dd\bm x \right)=:\rm{II}_1+\rm{II}_2.
		\end{aligned}
	\end{equation*}
	Similar to the estimation of \eqref{thm5-3}, according to Lemma \ref{lemma1} and Theorem \ref{thm3}, we have
	\begin{equation}
		\begin{aligned}
			\label{thm5-5}
			\rm{II}_1 &\lesssim\epsilon^{\frac 1 2}\left\|u_t^{\epsilon}-u_t\right\|_{L^2(\Gamma_{\epsilon};\omega_{\epsilon})}\left\|\divo(A\nabla u_t)\right\|_{L^2(\Gamma_{\epsilon};\omega_{\epsilon})}\lesssim \epsilon^2.
		\end{aligned}
	\end{equation}
	Combining the term $\rm{II}_2$ and $\rm{I}_4$ together, by using the Cauchy-Schwarz inequality and the definition of $\omega_{\epsilon}$, we have 
	\begin{equation}
		\begin{aligned}
			\label{thm5-6}
			\rm{II}_2+\rm{I}_4 &= 2\int_{D_{\epsilon}}g\left(u_t^{\epsilon}-u_t\right)\left|\nabla \omega_{\epsilon}\right|\dd\bm x+2\int_{D_{\epsilon}}A\nabla u\nabla \omega_{\epsilon}\left(u_t^{\epsilon}-u_t\right)\dd\bm x\\
			&=2\int_{D_{\epsilon}} \left(g-\bm n \cdot A\nabla u\right)\left(u_t^{\epsilon}-u_t\right)\left|\nabla \omega_{\epsilon}\right|\dd\bm x\\
			&\lesssim \left(\int_{D_{\epsilon}} \left|\omega_{\epsilon}^{\frac 1 2}\left(u_t^{\epsilon}-u_t\right)\right|^2\dd\bm x\right)^{\frac 1 2}\left(\int_{D_{\epsilon}}\left|\left(g-\bm n\cdot A \nabla u\right)\left|\nabla \omega_{\epsilon}\right|/\omega_{\epsilon}^{\frac 1 2} \right|^2\dd\bm x \right)^{\frac 1 2}\\
			&\lesssim \sup\limits_{0\leq t \leq T} \left\|u_t^{\epsilon}-u_t\right\|_{L^2(D_{\epsilon};\omega_{\epsilon})}\left\|\bm n\cdot  A\nabla u-g\right\|_{L^{\infty}(D_{\epsilon})}\lesssim \epsilon^2.
		\end{aligned}
	\end{equation}
	
	As for the estimation of $\rm{I}_3$, due to the local continuity of $f$, we have
	\begin{equation}
		\begin{aligned}
			\label{thm5-7}
			\rm{I}_3=& \left(2\int_{D_{\epsilon}} f(u^{\epsilon})\left(u_t^{\epsilon}-u_t\right)\omega_{\epsilon}\dd\bm x-2\int_{D_{\epsilon}} f(u)\left(u_t^{\epsilon}-u_t\right)\omega_{\epsilon}\dd\bm x\right)\\
			&+\left(2\int_{D_{\epsilon}}f(u)\left(u_t^{\epsilon}-u_t \right)\omega_{\epsilon}\dd\bm x-2\int_D f(u)\left(u_t^{\epsilon}-u_t \right)\dd\bm x\right)\\
			\lesssim & \|f(u^{\epsilon})-f(u)\|_{L^2(D_{\epsilon};\omega_{\epsilon})}\|u_t^{\epsilon}-u_t\|_{L^2(D_{\epsilon};\omega_{\epsilon})}\\
			&+\epsilon^{\frac 1 2}\|f(u)\|_{L^2(\Gamma_{\epsilon};\omega_{\epsilon})}\|u_t^{\epsilon}-u_t\|_{L^2(\Gamma_{\epsilon};\omega_{\epsilon})}\\
			\lesssim & \|u^{\epsilon}-u\|_{H^1(D_{\epsilon};\omega_{\epsilon})}\|u_t^{\epsilon}-u_t\|_{L^2(D_{\epsilon};\omega_{\epsilon})}+\epsilon^{\frac 1 2}\|f(u)\|_{L^2(\Gamma_{\epsilon};\omega_{\epsilon})}\|u_t^{\epsilon}-u_t\|_{L^2(\Gamma_{\epsilon};\omega_{\epsilon})}\\
			\lesssim & \epsilon^2 + \|\nabla u^{\epsilon}-\nabla u\|_{L^2(D_{\epsilon};\omega_{\epsilon})}^2,
		\end{aligned}
	\end{equation}
	where the last three inequality uses the conclusion of Lemma \ref{lemma1} and Theorem \ref{thm3}. Inserting \eqref{thm5-3}-\eqref{thm5-7} to \eqref{thm5-2}, we can easily derive the conclusion.

\end{proof}

\section{Numerical experiments}\label{numerical}

In this section, we will present some numerical experiments to verify the error estimates (Theorems \ref{thm3}  and Theorem \ref{thm5} ) obtained in Section \ref{theory} and demonstrate the performance of the DDM. We apply the finite element method with the bilinear basis functions  for space discretization and  the BDF2 scheme for time stepping to solve the 
DDM-transformed problem \eqref{weighted_variational} defined on a larger rectangular domain, which is of second-order  in time, and first-order  in space with respect to the $H^1$ norm and second-order  in space with respect to the $L^2$ norm.
To demonstrate  the convergence order of the diffuse domain solution with respect to the interface width parameter $\epsilon$, i.e., $\left\|u(t)-u^{\epsilon}(t)\right\|_{L^2(D_{\epsilon};\omega_{\epsilon})}$ and $\left\|u(t)-u^{\epsilon}(t)\right\|_{H^1(D_{\epsilon};\omega_{\epsilon})}$, we will
take  $\epsilon$ to be much larger than the spatial mesh size and the time step size (so that the solution errors are mainly caused by the DDM approximation). All tests are done using Matlab on a laptop with Intel Ultra 9 185H, 2.30GHz CPU and 32GB memory.

\subsection{The case of constant diffusion coefficient}


\begin{example}
	\label{ex1}
	In this example, we consider the following two-dimensional diffusion problem with a constant diffusion coefficient: for $0 \leq t \leq T$, 
	\begin{equation}
		\left\{\begin{split}
			&u_t=3\Delta u +f(t, x, y, u), \ &&(x,y)\in D\\
			&u(0,x,y) = \left(\frac 5 2 x^2-5x\right)\left(\frac 5 2 y^2-5y \right), \ \  &&(x, y) \in D,
		\end{split}
		\right.
	\end{equation}
	where
	\begin{equation*}
		\begin{split}
			f(t, x, y, u) =& u-e^{-\pi^2 t}\Bigg((\pi^2+1)\left(\frac 5 2x^2-5x\right)\left(\frac 5 2 y^2-5y\right)\\
			&+15\left(\frac 5 2 x^2-5x\right)+15\left(\frac 5 2y^2-5y\right) \Bigg).
		\end{split}
	\end{equation*}
	The exact solution is given by $$u(t,x,y)=e^{-\pi^2 t}\left(\frac 5 2x^2-5x \right)\left(\frac 5 2y^2-5y \right).$$
	Two different domains $D$ are considered: one is a circular domain defined by
	\begin{equation}
		\label{circle}
		D=\left\{(x,y)\;\big|\; x^2+y^2=\frac{1}{16} \right\},
	\end{equation}
	and the other is a flower-shaped domain defined by
	\begin{equation}
		\label{flower}
		D=\left\{(x,y)\;\big|\; x^2+y^2-\left(0.18-0.03\sin\left(4\arctan\left(\frac y x\right)\right)\right)^2=0 \right\}.
	\end{equation}
	The Neumann boundary condition is imposed correspondingly, and 
	the terminal time is chosen as $T=0.5$. 
\end{example}

We run the DDM  on extended rectangular domain $\Omega=[-1/2,1/2]\times[-1/2, 1/2]$ ($D\subset\Omega$). We take the time steps $N_T=512$ (i.e., $\Delta \tau = T/N_T=1/1024$ and the uniformly spatial mesh with  $h_x = h_y=1/512$. 
To ensure the approximation error will dominate the numerical error, we set the interface thickness $\epsilon$=1/8, 1/16, 1/32, 1/64, respectively. Table \ref{tab 1} shows all the numerical errors measured in the weighted $L^2$ and $H^1$ norms and their corresponding convergence rates. Figure \ref{fig 1} also presents the simulated phase structures of the numerical solutions at the terminal time with the interface thickness $\epsilon$=1/16, 1/32, 1/64, respectively. From the Table \ref{tab 1}, we can observe that the diffuse domain method will converge at second order in the weighted $L^2$ norm and first order in the weighted $H^1$ norm, which coincide very well with the error estimates derived in Theorems \ref{thm3} and \ref{thm5}. For the Figure \ref{fig 1}, as the interface thickness decreasing, the transition zone in the figure becomes narrower and narrower. Also, the contour of the approximated region matches the exact region better and better. The inner phase structure depends on the intensity of fluctuations in the exact solution $u$ and the value of weighted function $\omega_{\epsilon}$. Numerical errors tend to be more significant near the domain boundaries, around singularity points, and in areas where the exact solution changes rapidly, whereas in the interior of the domain and in regions where the exact solution varies smoothly, numerical errors tend to be smaller.


\begin{table}[!htbp]
	\centering
	\caption{Numerical results on the solution errors measured in the weighted $L^2$ and $H^1$ norms and corresponding convergence rates at the terminal time $T=0.5$ produced by the DDM in Example \ref{ex1}.}
	\begin{tabular}{|ccccc|}
		\hline
		$\epsilon$ & $\|u^{\epsilon}-u(t_n)\|_{L^2(D_{\epsilon};\omega_{\epsilon})}$ & CR & $\|u^{\epsilon}-u(t_n)\|_{H^1(D_{\epsilon};\omega_{\epsilon})}$ & CR\\
		\hline
		\multicolumn{5}{|c|}{Convergence tests for the circular domain}\\
		\hline
		1/8 &  1.0300e-02 & - & 1.0600e-02 & - \\
		1/16 & 2.6000e-03 & 1.99 & 2.7000e-03 & 1.97 \\
		1/32 & 6.3631e-04 & 2.03 & 7.2971e-04 & 1.89\\
		1/64 & 1.5420e-04 & 2.04 & 3.9543e-04 & 0.88 \\
		\hline
		\multicolumn{5}{|c|}{Convergence tests for the flower-shaped domain}\\
		\hline
		1/8 & 8.5000e-03 & - & 8.7000e-03 & - \\
		1/16 & 2.1000e-03 & 2.02 & 2.2000e-03 & 1.98 \\
		1/32 & 5.2652e-04 & 2.00 & 5.7516e-04 & 1.94 \\
		1/64 & 1.3043e-04 & 2.01 & 2.5432e-04 & 1.18 \\
		\hline
	\end{tabular}
	\label{tab 1}
\end{table}

\begin{figure}[htbp]
	\centering
	\subfigure{
		\label{fig1-1}
		\centering
		\includegraphics[width = 120pt,height=110pt]{./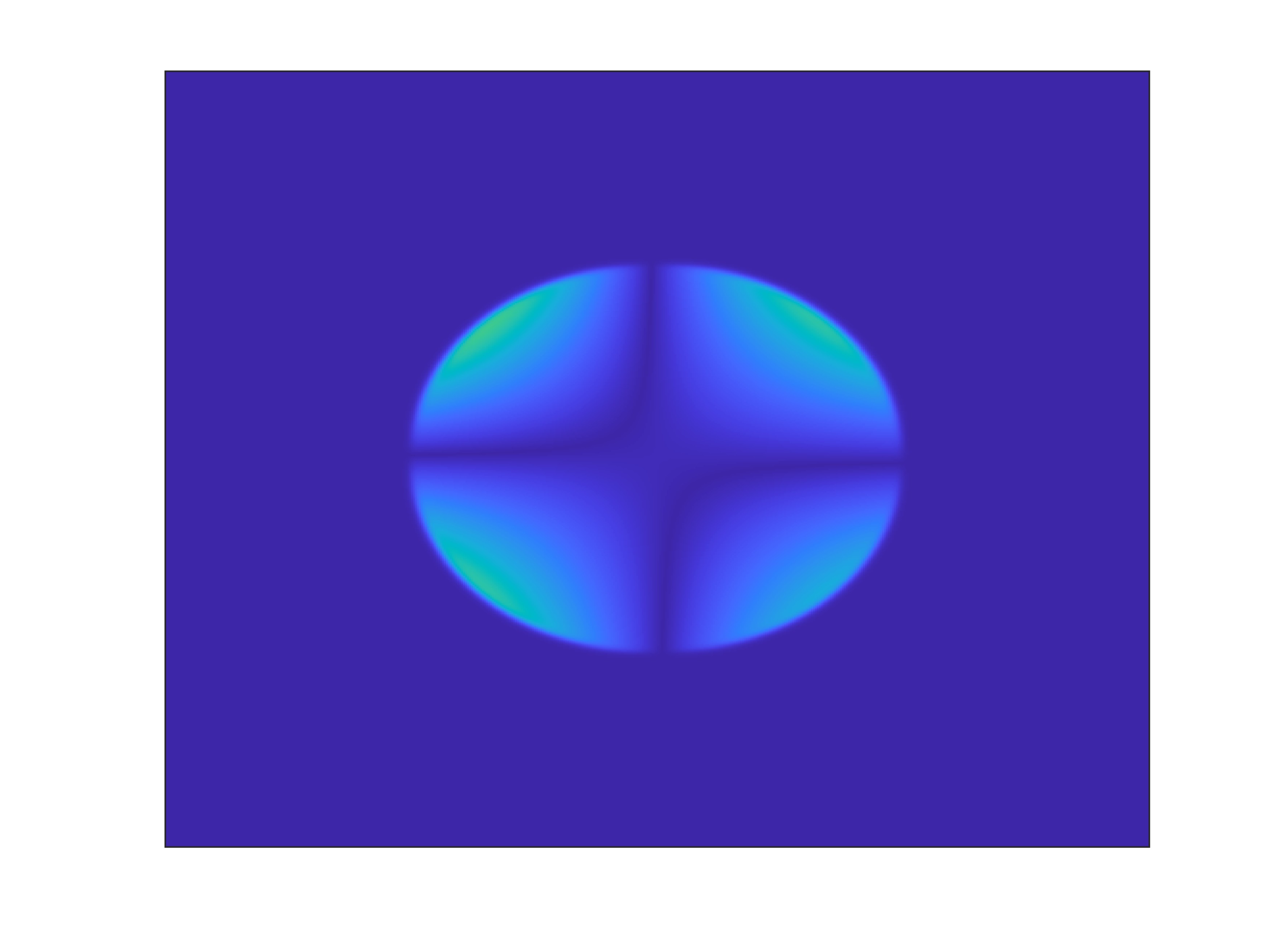}
	}\hspace{-0.8cm}
	\subfigure{
		\label{fig1-2}
		\centering
		\includegraphics[width = 120pt,height=110pt]{./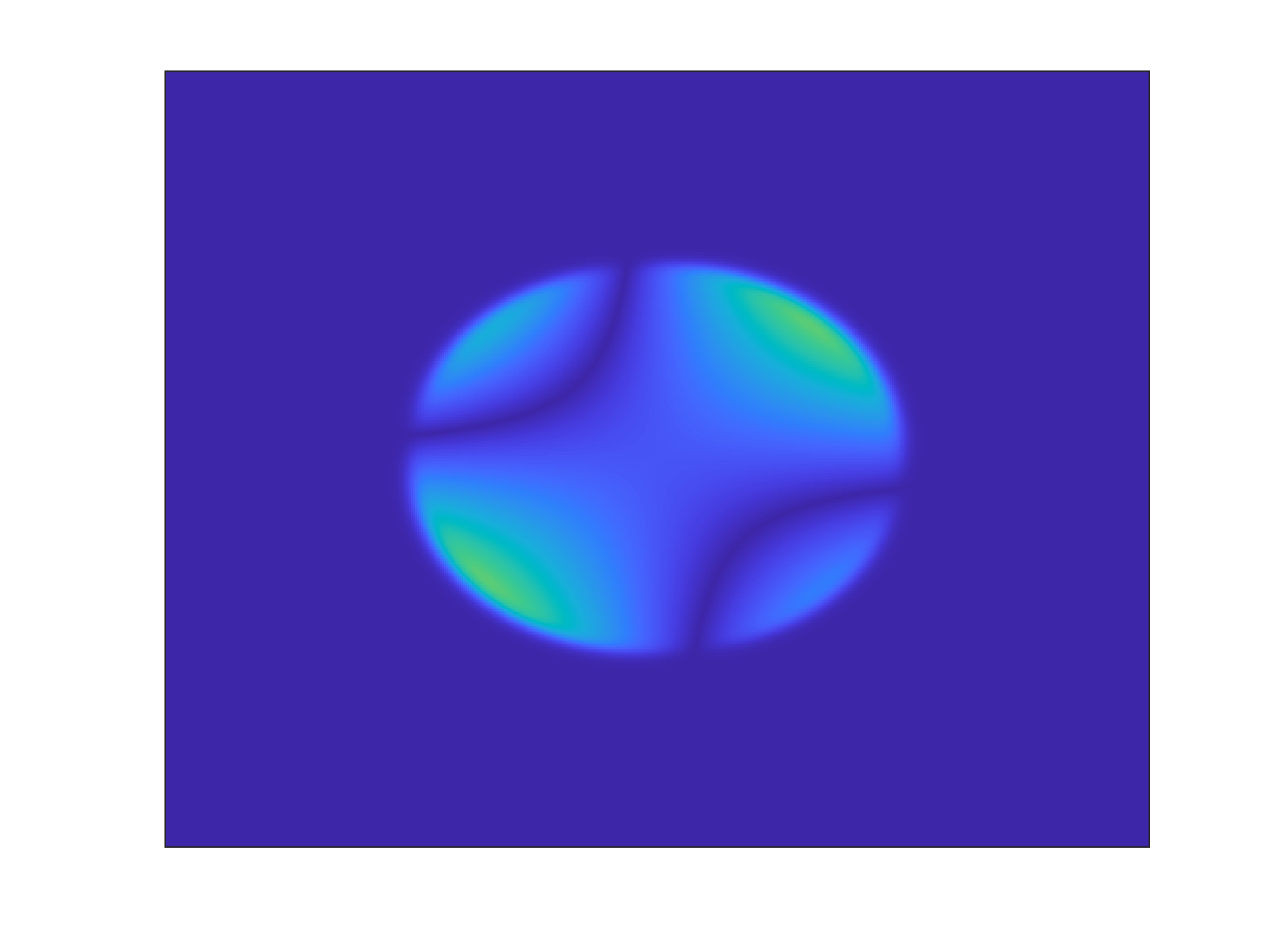}
	}\hspace{-0.8cm}
	\subfigure{
		\label{fig1-3}
		\centering
		\includegraphics[width = 130pt,height=110pt]{./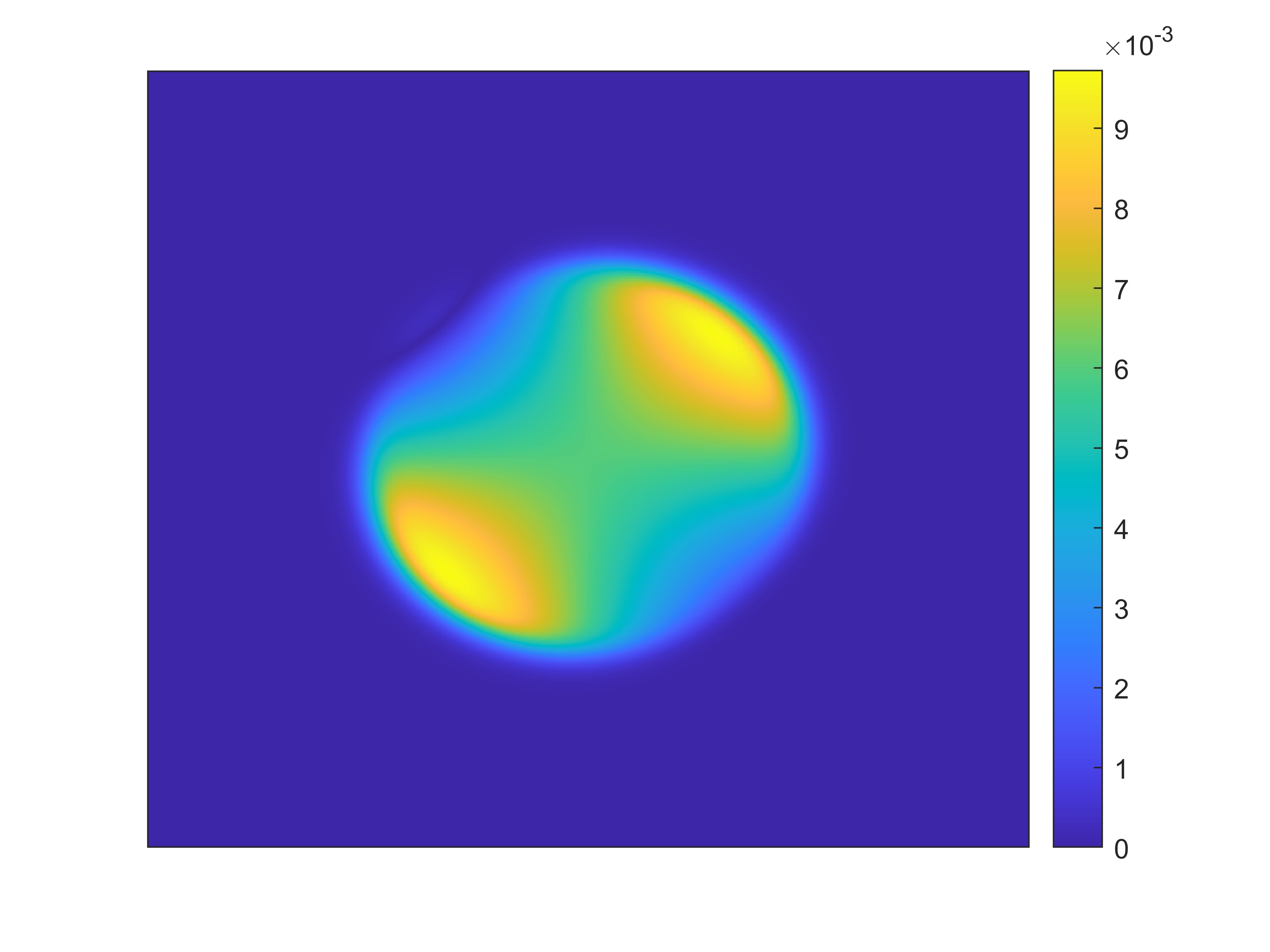}
	}
	
	\subfigure{
		\label{fig4-1}
		\centering
		\includegraphics[width = 120pt,height=110pt]{./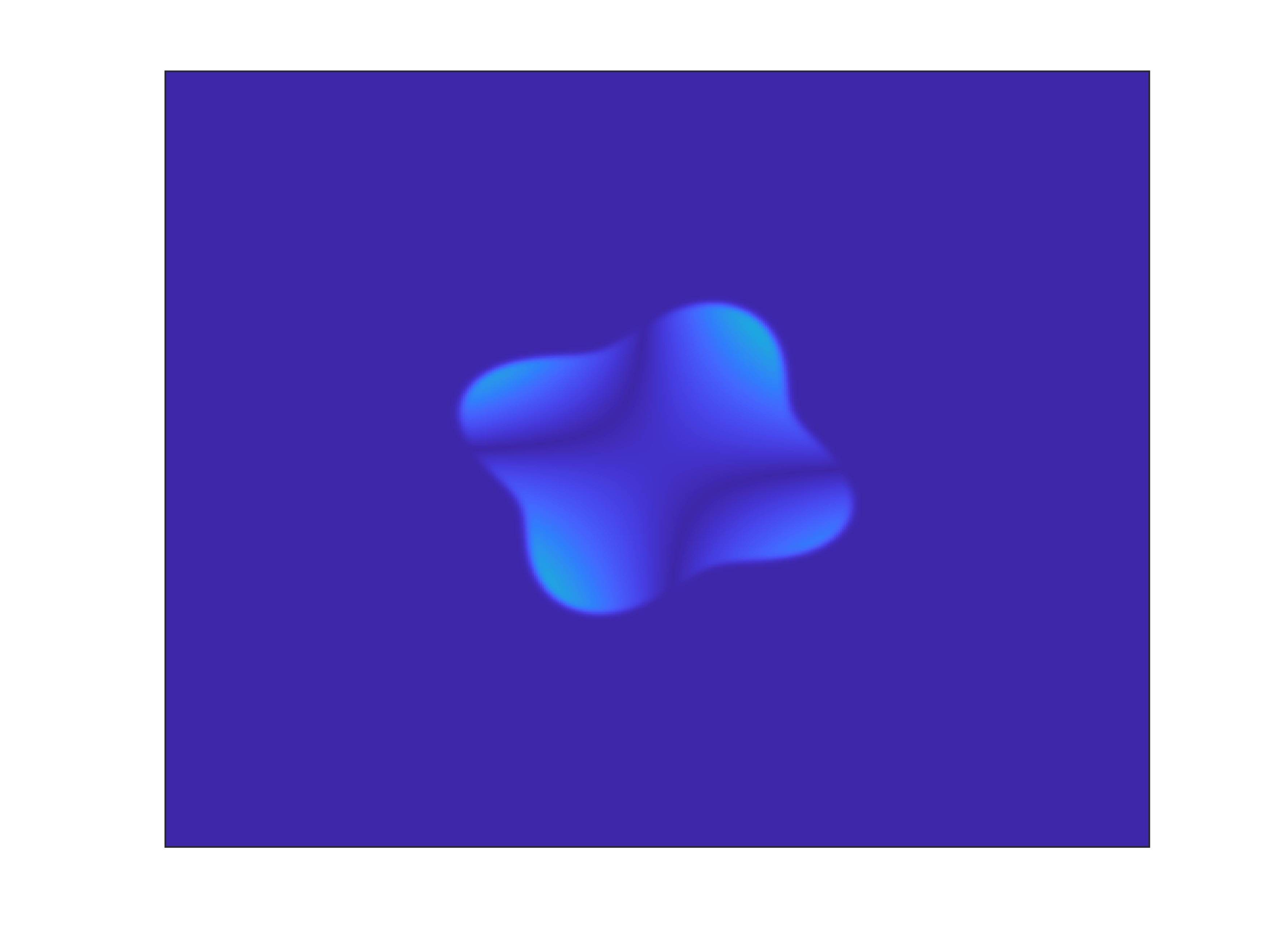}
	}\hspace{-0.8cm}
	\subfigure{
		\label{fig4-2}
		\centering
		\includegraphics[width = 120pt,height=110pt]{./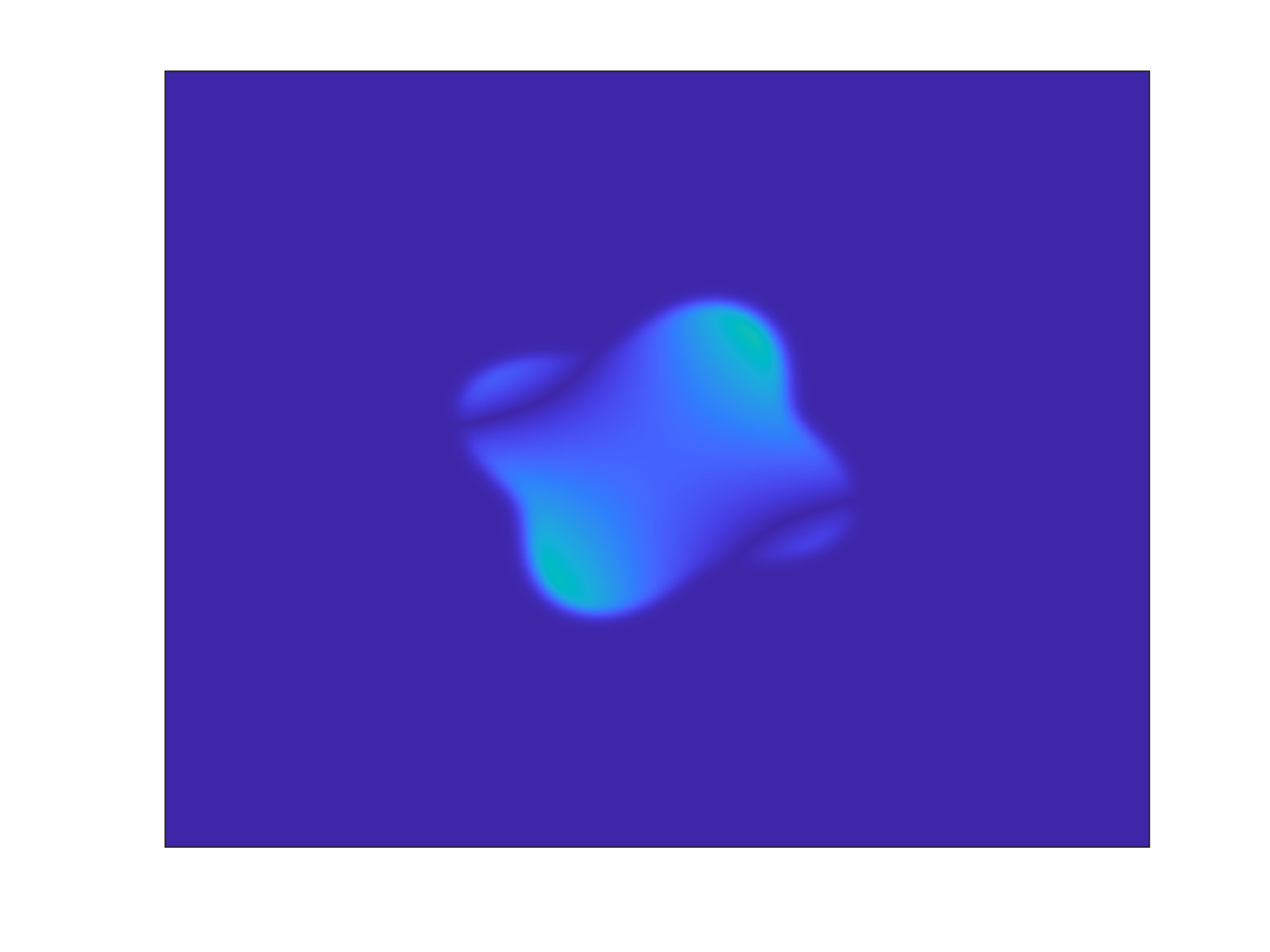}
	}\hspace{-0.8cm}
	\subfigure{
		\label{fig4-3}
		\centering
		\includegraphics[width = 130pt,height=110pt]{./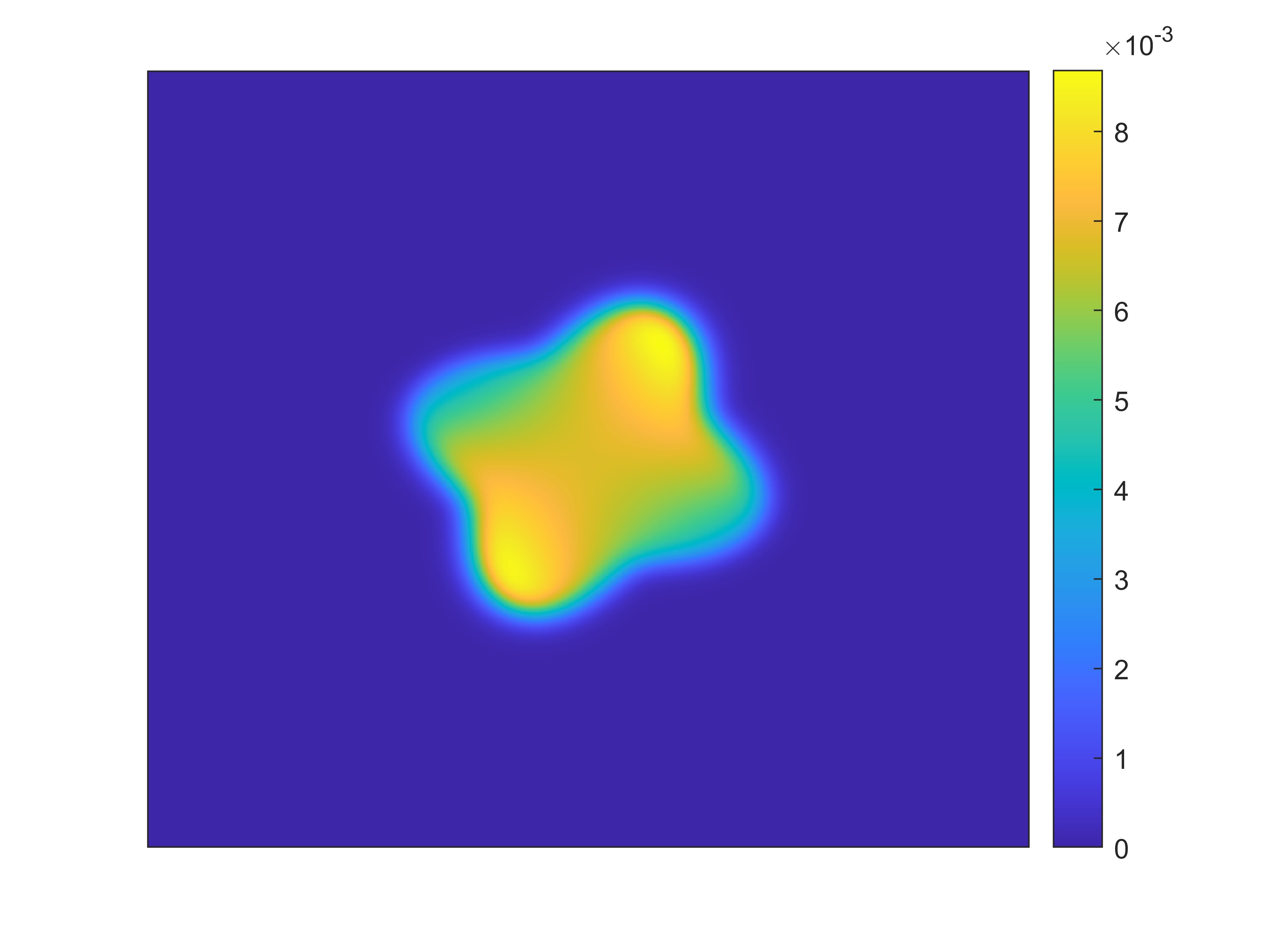}
	}
	\vspace{-0.5cm}
	\caption{The phase structures of the numerical solutions at the terminal time $T=0.5$ produced by the DDM approach with interface thickness $\epsilon=1/64,1/32,1/16$ (from left to right) for Example \ref{ex1}  in the circular domain (top row) and the flower-shaped domain (bottom row).}
	\label{fig 1}
\end{figure}

\subsection{The case of varying diffusion coefficient}


\begin{example}
	\label{ex2}
	In this example, we consider the following two-dimensional  diffusion problem  in the same circular domain and the flower-shaped domain used in Example \ref{ex1},  but with varying diffusion coefficient: for $0 \leq t \leq T$, 
	\begin{equation*}
		\left\{\begin{split}
			&u_t=\nabla \cdot \left((x^2+y^2+4)\nabla u\right)+f(t,x,y), \ &&(x,y)\in D\\
			&u(0,x,y) = \left(2 x^2+4x\right)\left(2 y^2-4y \right), \ \  &&(x, y) \in D,
		\end{split}
		\right.
	\end{equation*}
	where 
	\begin{equation*}
		\begin{split}
			f(t,x,y) =& -e^{-\pi^2 t}\Big(\pi^2\left(2 x^2-4x\right)\left(2 y^2-4y\right)+\left(4(x^2+y^2+4)+2x(4x-4)\right)\\
			&\cdot\left(2 y^2-4y \right)+\left(4(x^2+y^2+4)+2y(4y-4)\right)\left(2 x^2-4x \right)\Big),
		\end{split}
	\end{equation*}
	In this case, the exact solution is given by $$u(t,x,y)=e^{-\pi^2 t}\left(2 x^2-4x \right)\left(2 y^2-4y \right).$$
	Circular domain and flower-shaped domain defined in \eqref{circle} and \eqref{flower} are considered again. The Neumann boundary condition is correspondingly imposed and the terminal time is  set to $T=0.5$.
\end{example}

We verify the approximation accuracy of DDM by fixing $N_T=512$ (i.e., $\Delta \tau = T/N_T=1/1024$) and the uniform spatial meshes with mesh size $h_x=h_y=1/512$. The extended rectangular domain is set to be $\Omega=[-1/2,1/2]\times[-1/2,1/2]$ ($D \subset \Omega$). Then we set the interface thickness $\epsilon=1/8,1/16,1/32, 1/64$ respectively for approximation accuracy tests, ensuring the spatial mesh sizes and temporal step size are much finer than the interface thickness $\epsilon$. Table \ref{tab 2} shows the solution errors measured in the weighted $L^2$ and $H^1$ norms, including the corresponding convergence rates. From the Table \ref{tab 2}, we can conclude that the convergence rate for DDM method is two for the weighted $L^2$ norm. Though the convergence rate is slightly higher than one for the weighted $H^1$ norm, the numerical results roughly match our theoretical conclusion. Fixing the same spatial meshes and temporal partitions, the simulated phase structures of the numerical results at terminal time are shown in the Fig \ref{fig 2} with interface thickness $\epsilon$=1/16, 1/32, 1/64, respectively. It's obvious that with the interface thickness decreasing, the transition zone gradually becomes narrower and narrower, and the shape of the approximated region resembles the original irregular domain better and better. Similar as the Example \ref{ex1}, the phase structure of the numerical solution mainly depends on the intensity of fluctuations in the exact solution $u$ and the weighted function $\omega_{\epsilon}$. The numerical error tends to be more significant near the domain boundaries, around singularity points and in areas where the objective function changes rapidly, while that tends to be smaller in the interior of the domain due to the smoothness of the function.

\begin{table}[htbp]
	\centering
	\caption{Numerical results on the solution errors measured in the weighted $L^2$ and $H^1$ norms and corresponding convergence rates at the terminal time $T=0.5$ produced by the DDM in Example \ref{ex2}.}
	\begin{tabular}{|ccccc|}
		\hline
		$\epsilon$ & $\|u^{\epsilon}-u(t_n)\|_{L^2(D;\omega_{\epsilon})}$ & CR & $\|u^{\epsilon}-u(t_n)\|_{H^1(D;\omega_{\epsilon})}$ & CR\\
		\hline
		\multicolumn{5}{|c|}{Approximation tests for circle domain}\\
		\hline
		1/8 &  9.2000e-03 & - & 9.3000e-03 & - \\
		1/16 & 2.3000e-03  & 2.00 & 2.4000e-03 & 1.95 \\
		1/32 & 5.6595e-04  & 2.02 & 6.6273e-04 & 1.86 \\
		1/64 & 1.3832e-04  & 2.03 & 2.8219e-04 & 1.23\\
		\hline
		\multicolumn{5}{|c|}{Approximation tests for flower-shaped domain}\\
		\hline
		1/8 & 7.4000e-03 & - & 7.5000e-03 & - \\
		1/16 & 1.8000e-03 & 2.04 & 1.9000e-03 & 1.98 \\
		1/32 & 4.5962e-04 & 1.97 & 5.3160e-04 & 1.84 \\
		1/64 & 1.1410e-04 & 2.01 & 2.1916e-04 & 1.28 \\
		\hline
	\end{tabular}
	\label{tab 2}
\end{table}

\begin{figure}[htbp]
	\centering
	\subfigure{
		\label{fig2-1}
		\centering
		\includegraphics[width = 120pt,height=110pt]{./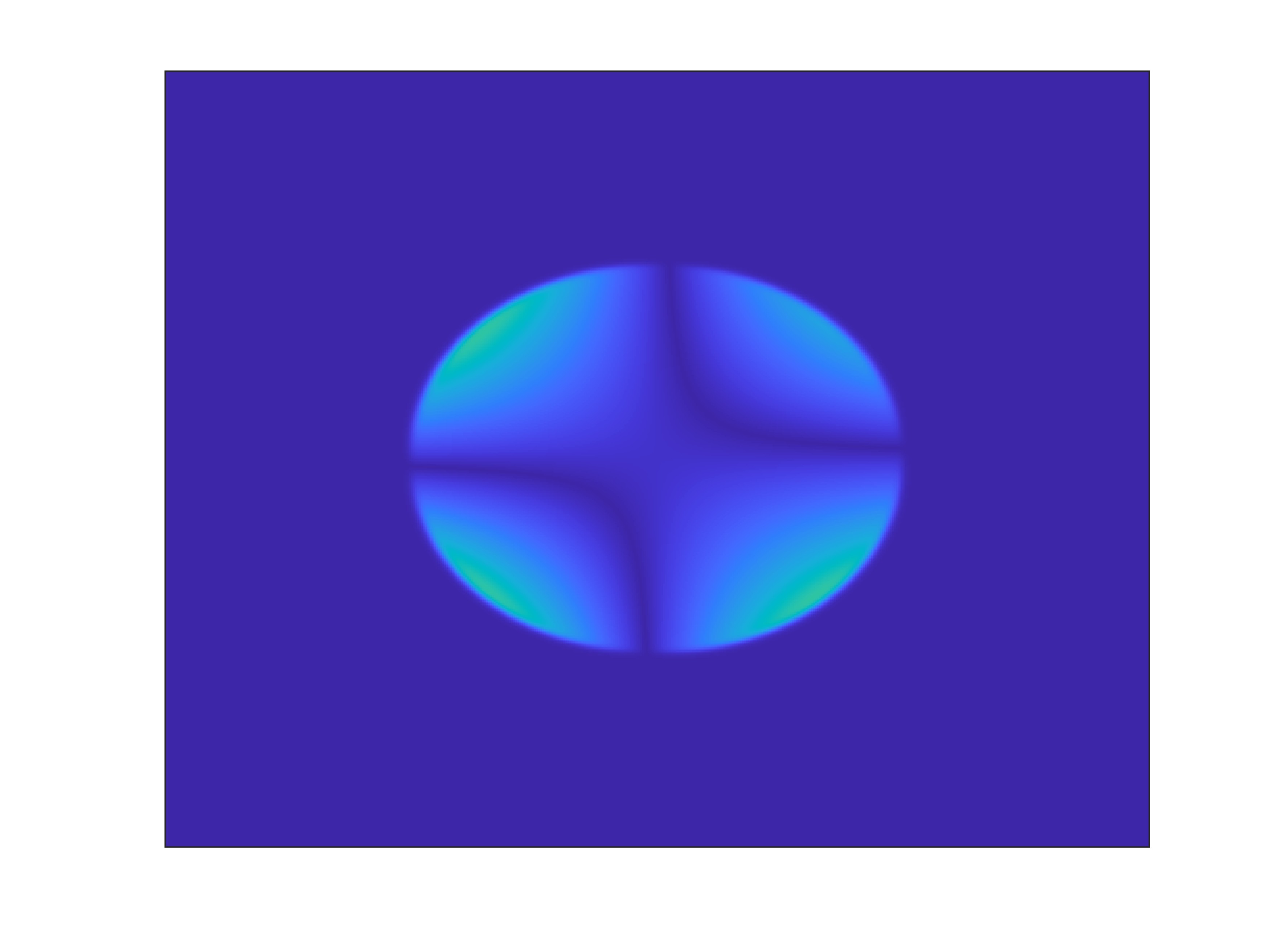}
	}\hspace{-0.8cm}
	\subfigure{
		\label{fig2-2}
		\centering
		\includegraphics[width = 120pt,height=110pt]{./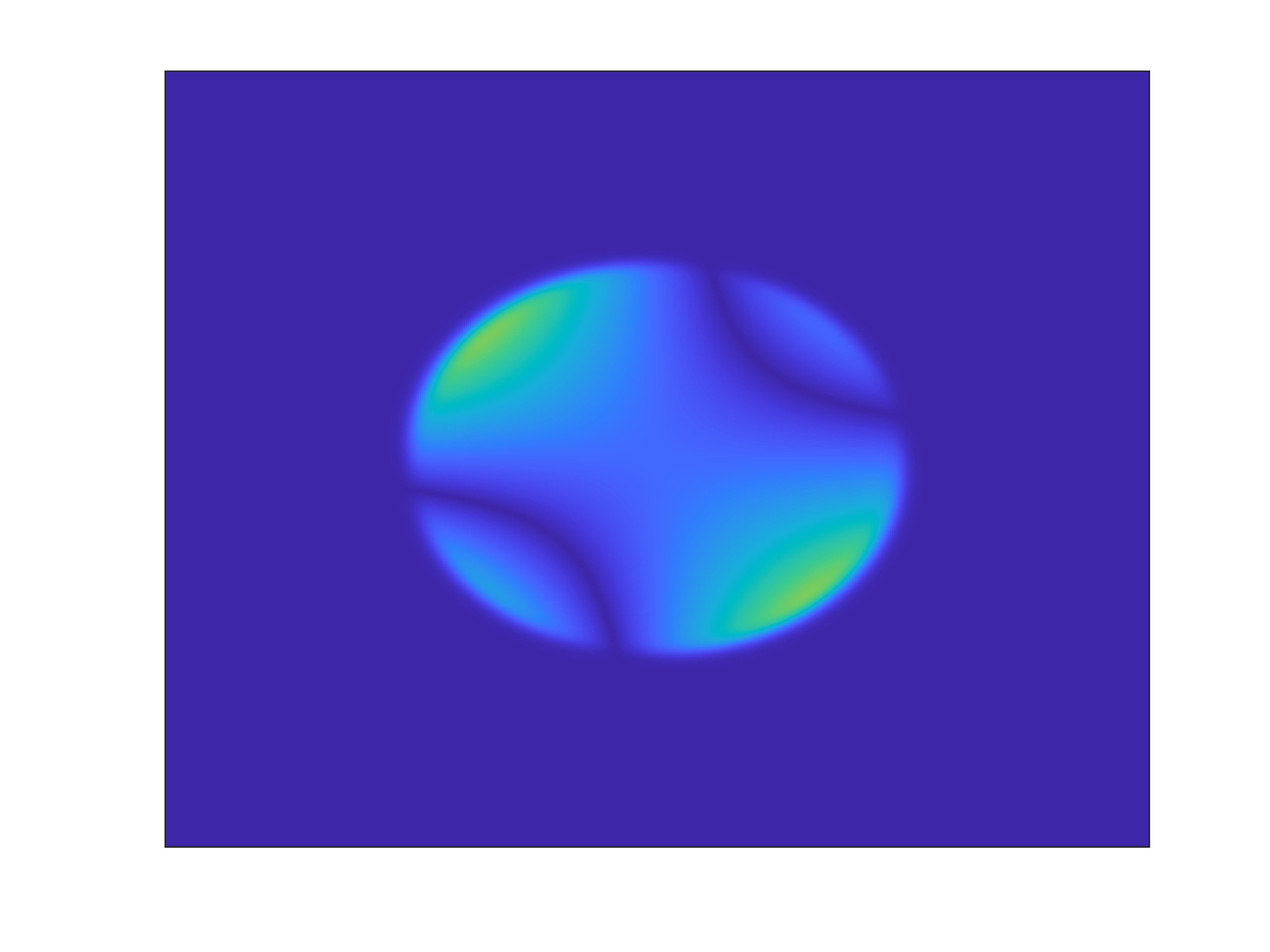}
	}\hspace{-0.8cm}
	\subfigure{
		\label{fig2-3}
		\centering
		\includegraphics[width = 130pt,height=110pt]{./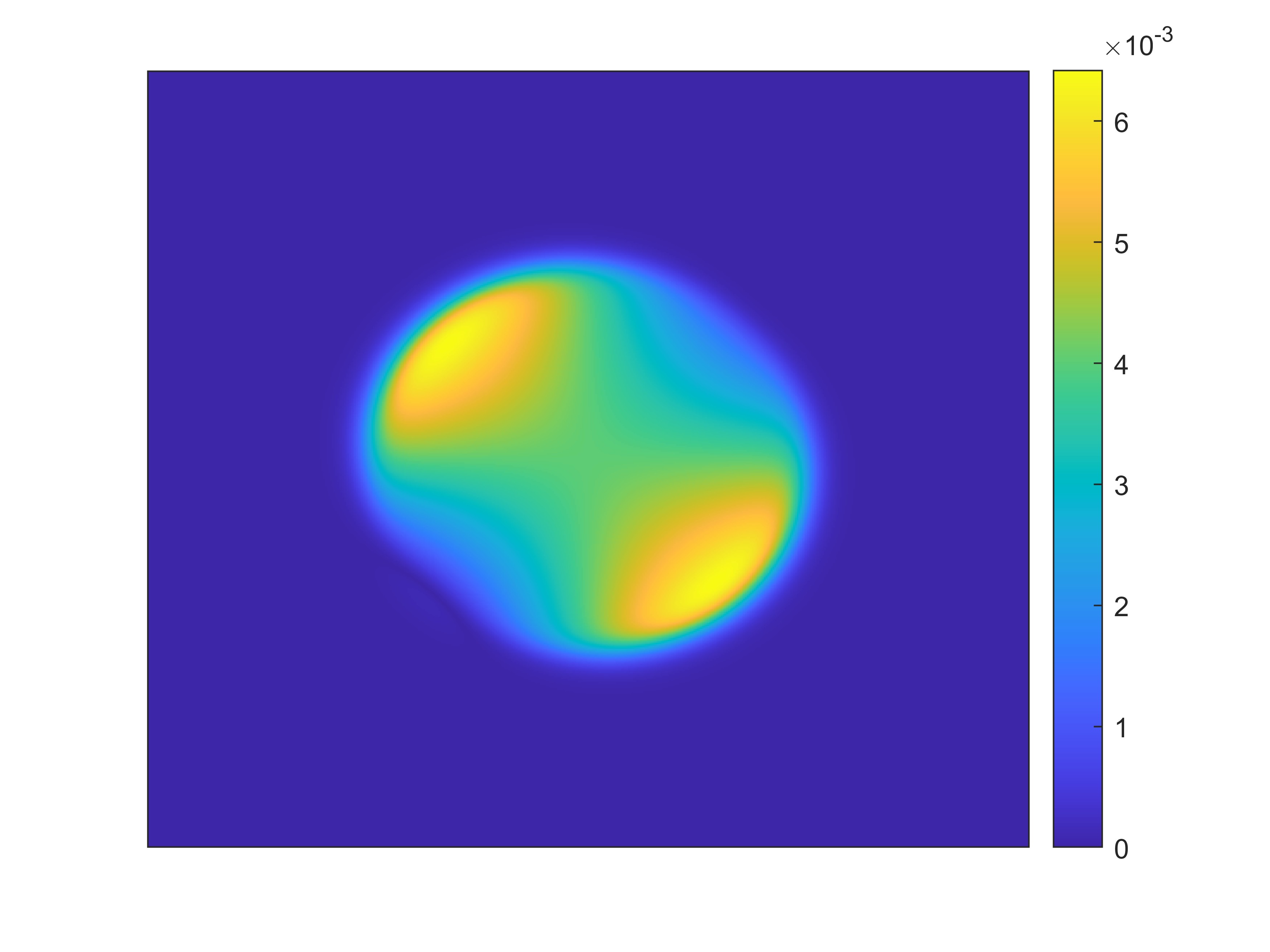}
	}
	
	\subfigure{
		\label{fig3-1}
		\centering
		\includegraphics[width = 120pt,height=110pt]{./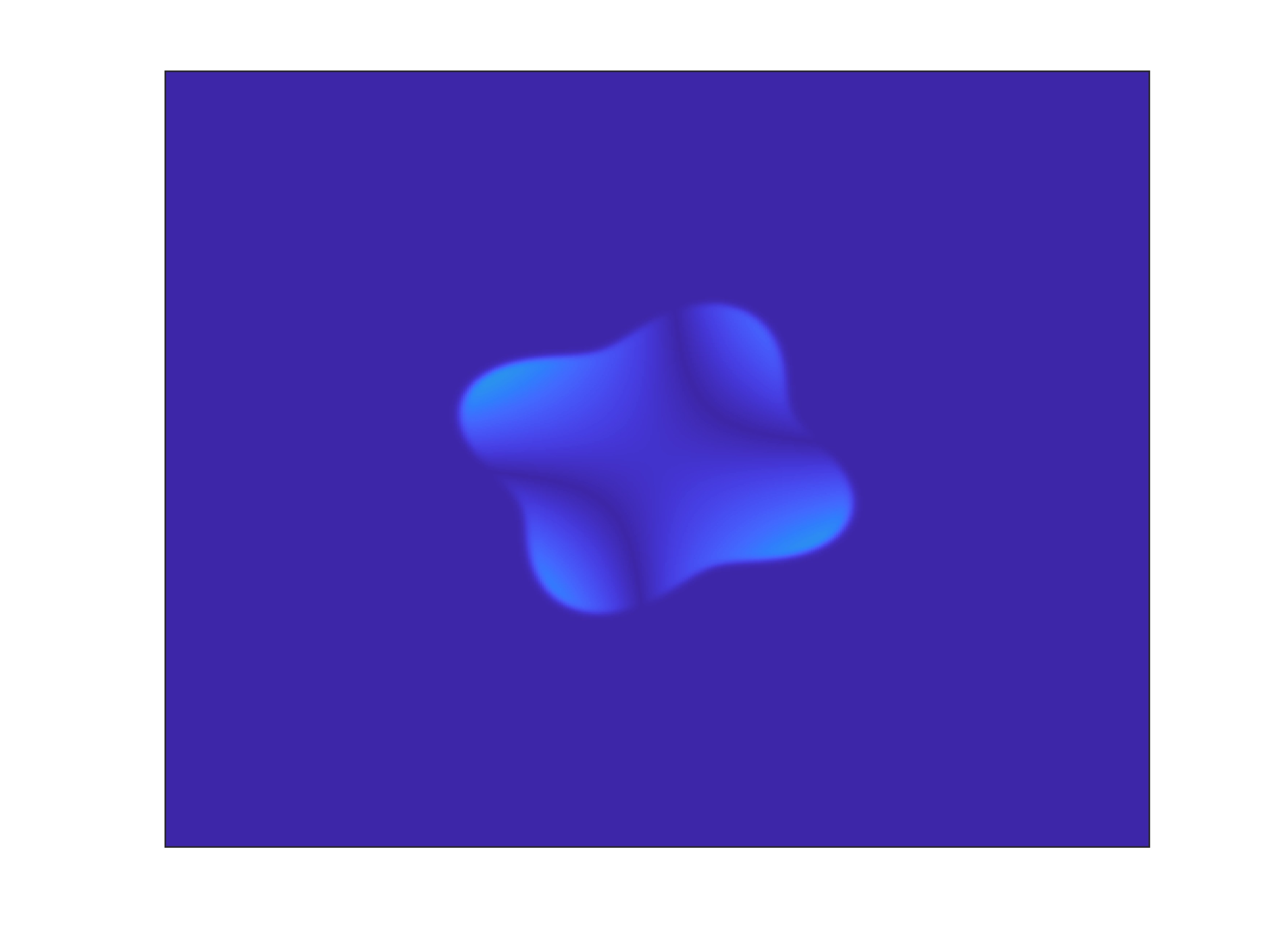}
	}\hspace{-0.8cm}
	\subfigure{
		\label{fig3-2}
		\centering
		\includegraphics[width = 120pt,height=110pt]{./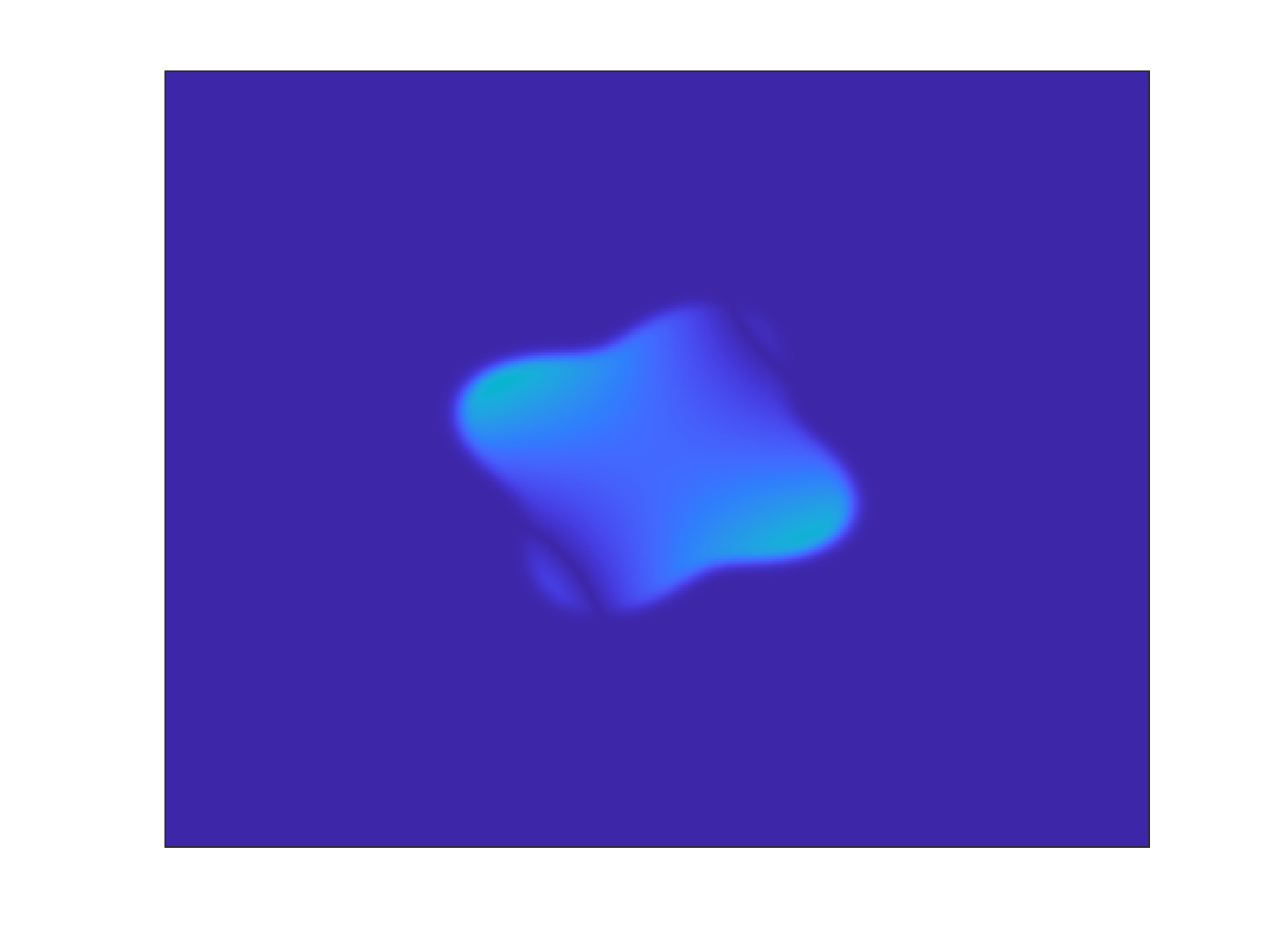}
	}\hspace{-0.8cm}
	\subfigure{
		\label{fig3-3}
		\centering
		\includegraphics[width = 130pt,height=110pt]{./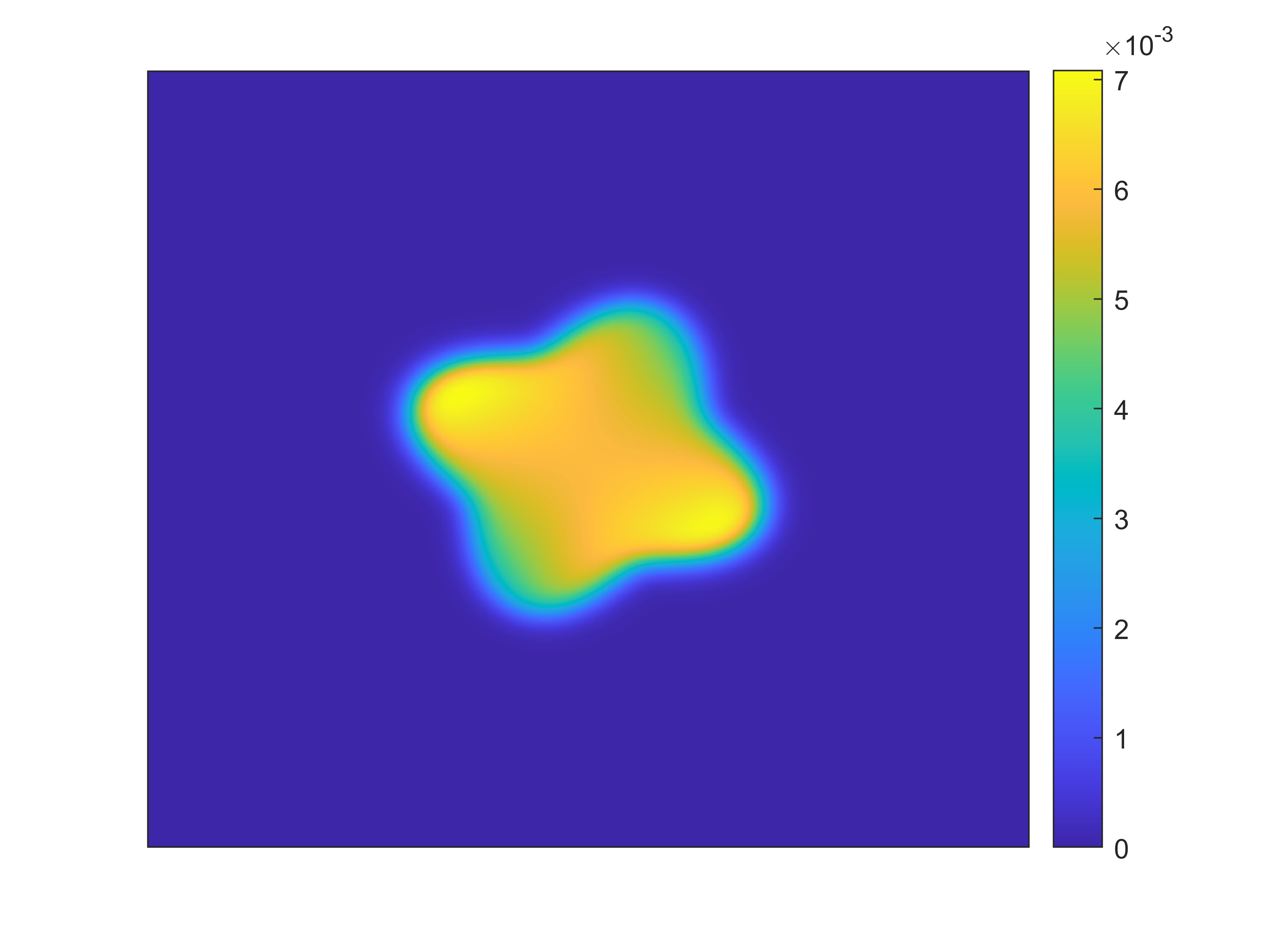}
	}
	\vspace{-0.5cm}
	\caption{The phase structures of the numerical results at the terminal time $T=0.5$ produced by the DDM approach with interface thickness $\epsilon=1/64,1/32,1/16$ (from left to right) for Example \ref{ex2} in the circular domain (top row) and the flower-shaped domain (bottom row).}
	\label{fig 2}
\end{figure}

\subsection{Allen-Cahn equation with double-well potential function}
\begin{example}
	\label{ex3}
	In this example, we consider the traveling wave problem governed by the following 2D Allen-Cahn equation with double-well potential function in the same flower-shaped domain used in Example \ref{ex1}.
	\begin{equation*}
		\label{Allen-Cahn}
		\left\{
		\begin{split}
			&u_t = \Delta u - \frac{1}{\widetilde{\epsilon}^2}(u^3-u), \quad (x,y)\in \Omega,\ 0\leq t\leq T,\\
			&u(0,x,y) = \frac 1 2 \left(1-\tanh\left(\frac{x}{2\sqrt{2}\widetilde{\epsilon}}\right)\right), \quad (x,y)\in \Omega,
		\end{split}
		\right.
	\end{equation*}
	where $\Omega=\left(-\frac1 2, \frac 1 2\right)\times\left(-\frac 1 2,\frac 1 2\right)$. The exact solution is given by
	\begin{equation*}
		u(t,x,y)=\frac{1}{2}\left(1-\tanh\left(\frac{x-st}{2\sqrt{2}\widetilde{\epsilon}}\right) \right),
	\end{equation*}
	where $s=\frac{3}{\sqrt{2}\widetilde{\epsilon}}$, and the Neumann boundary condition is correspondingly imposed, which is clearly nonhomogeneous. The terminal time is taken to be $T=1.85\widetilde{\epsilon}$.
\end{example}

We set the interface thickness parameter of Allen-Cahn equation is $\widetilde{\epsilon}=0.01$ (i.e., $T=0.185$). For the approximation accuracy test, we run the DDM scheme with fixed temporal meshes $N_T=1024$, uniformly spatial meshes $N_x\times N_y=256\times 64$ and refined interface thickness $\epsilon=$1/4, 1/8, 1/16, 1/32, respectively, so compared with the temporal error and spatial error, the approximation error will dominate the numerical error. All numerical results are shown in Table \ref{tab 3}, including the solution errors measured in the weighted $L^2$ and $H^1$ norms and corresponding convergence rates. For brevity, in this case, we only tested the problem on the flower-shaped domain. We still observe the roughly second-order convergence in the weighted $L^2$-norm and the convergence rate drops to 1 in the weighted $H^1$-norm, which basically match the theoretical results. Fixing the same spatial meshes and temporal partitions, the simulated phase structures of the numerical solution at terminal time are presented in the Fig \ref{fig 5} with interface thickness $\epsilon$=1/8, 1/16, 1/32, respectively. It's obvious that with the interface thickness decreasing, the transition zone gradually becomes narrower and narrower, and the shape of the approximated region fits the original region better and better. For all the values of $\epsilon$, the numerical solution will converge to one, which matches the theoretical results of Allen-Cahn equation.
\begin{table}[htbp]
	\centering
	\caption{Numerical results on the solution errors measured in the weighted $L^2$ and $H^1$ norms and corresponding convergence rates at the terminal time $T$ produced by the DDM in Example \ref{ex3}.}
	\begin{tabular}{|ccccc|}
		\hline
		$\epsilon$ & $\|u^{\epsilon}-u(t_n)\|_{L^2(D;\omega_{\epsilon})}$ & CR & $\|u^{\epsilon}-u(t_n)\|_{H^1(D;\omega_{\epsilon})}$ & CR\\
		\hline
		1/4 & 2.2000e-03 & - & 6.3700e-02 & -\\
		1/8 & 2.4128e-04 & 3.19 & 1.6200e-02 & 1.98 \\
		1/16 & 6.8178e-05 & 1.82 & 5.4000e-03 & 1.58 \\
		1/32 & 2.2160e-05 & 1.62 & 2.7000e-03 & 1.00 \\
		\hline
	\end{tabular}
	\label{tab 3}
\end{table}

\begin{figure}[htbp]
	\centering
	\subfigure{
		\label{fig5-1}
		\centering
		\includegraphics[width = 120pt,height=110pt]{./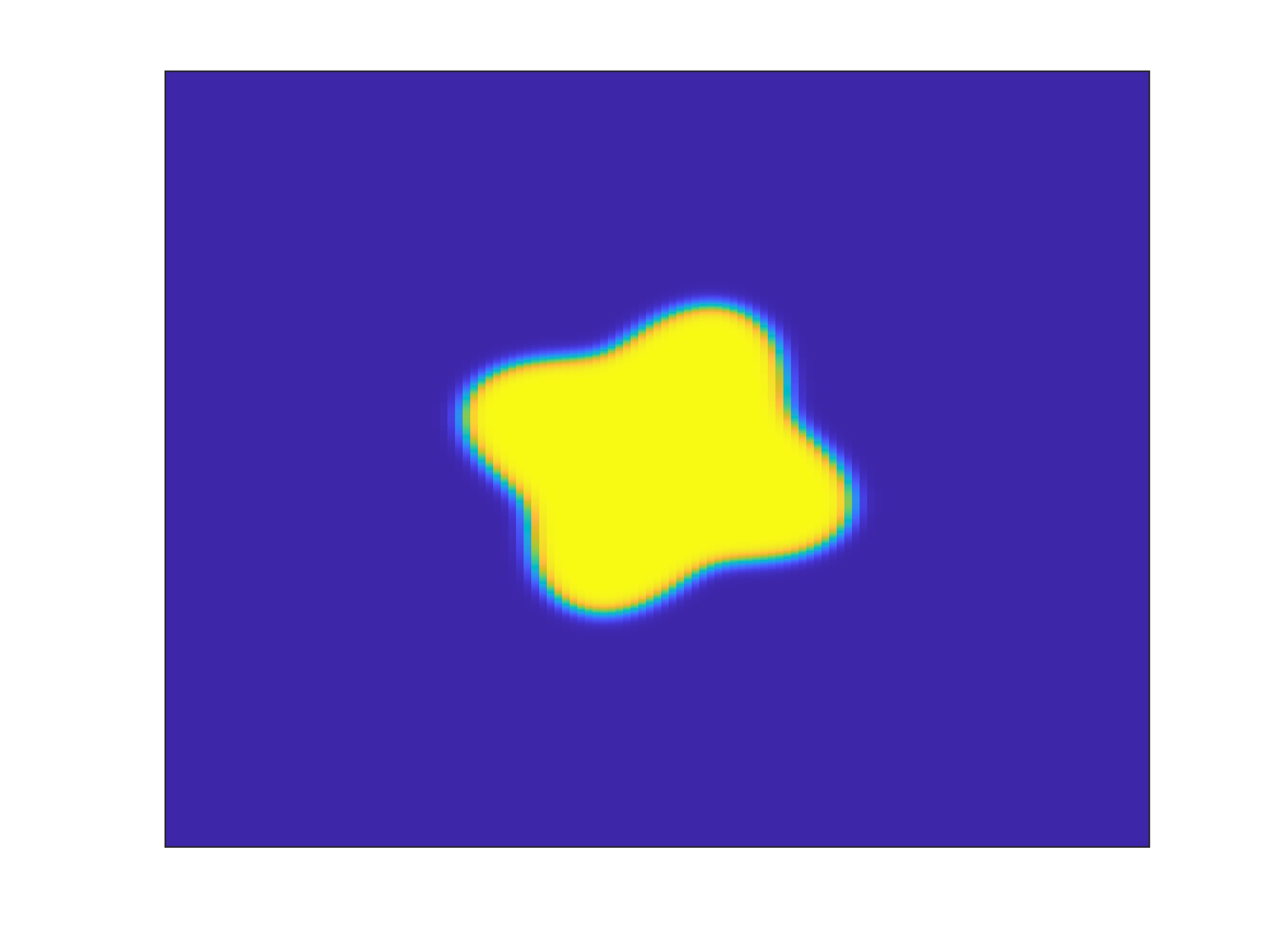}
	}\hspace{-0.8cm}
	\subfigure{
		\label{fig5-2}
		\centering
		\includegraphics[width = 120pt,height=110pt]{./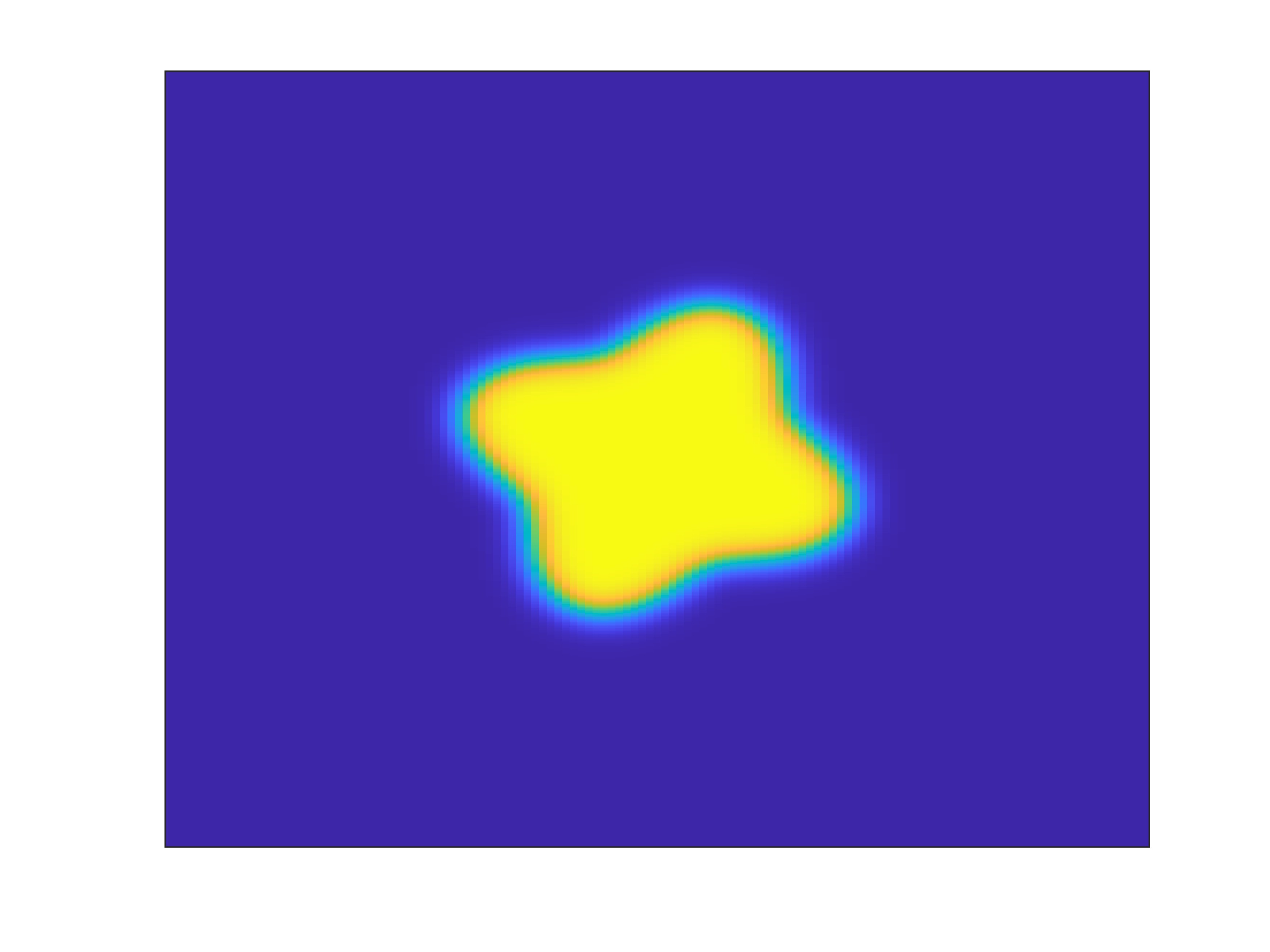}
	}\hspace{-0.8cm}
	\subfigure{
		\label{fig5-3}
		\centering
		\includegraphics[width = 130pt,height=110pt]{./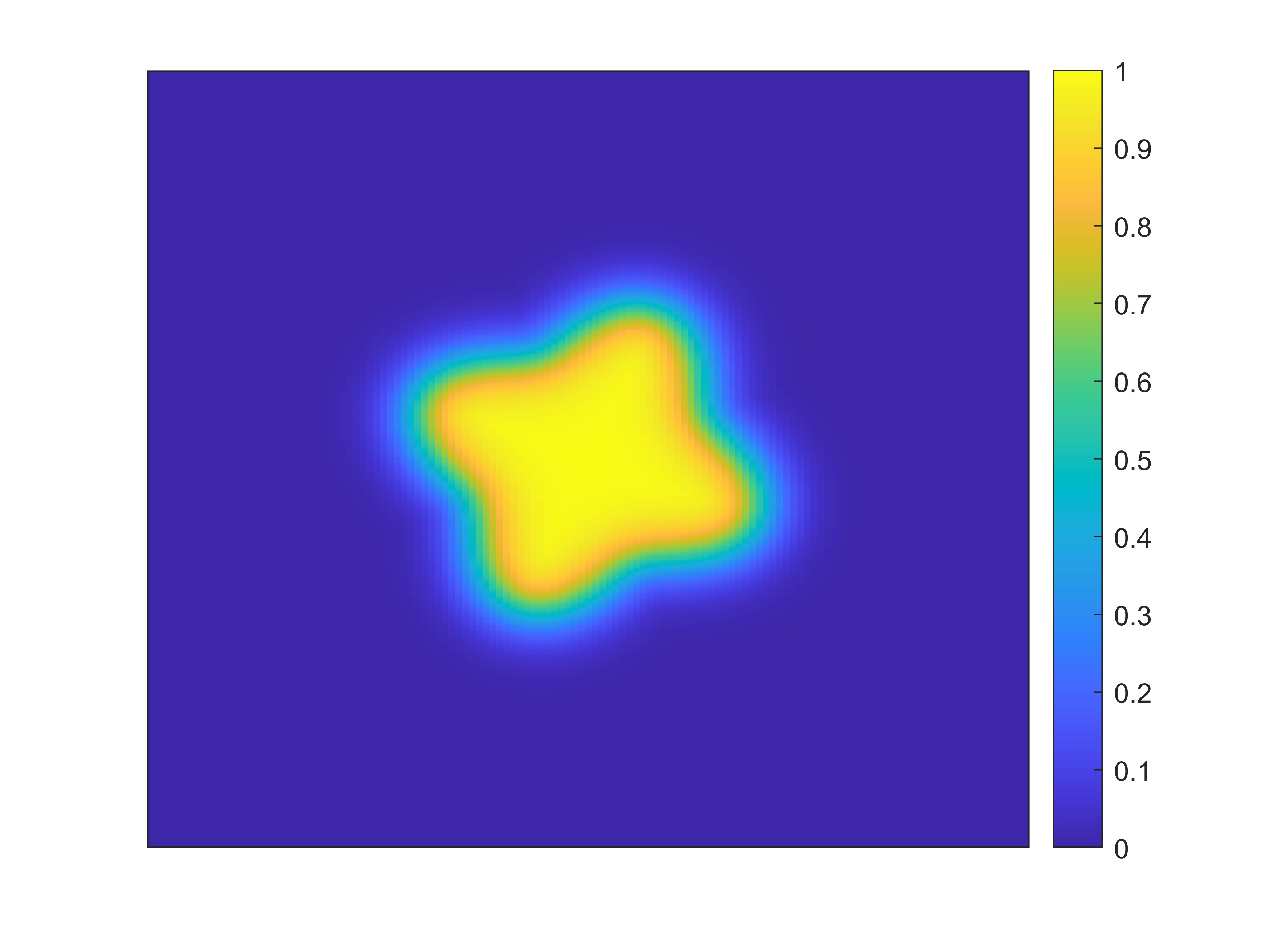}
	}
	\caption{The phase structures of the numerical errors at the terminal time $T$ produced by the DDM approach with interface thickness $\epsilon=1/32,1/16,1/8$ (from left to right) for Example \ref{ex3} in the flower-shaped domain.}
	\label{fig 5}
\end{figure}

\section{Conclusions}\label{conclusion}
In this paper, we studied the approximation error of the diffuse domain method for solving a class of semilinear parabolic equations, imposed with Neumann boundary conditions defined in general irregular domains. Optimal error estimates with respect to weighted $H^1$ and $L^2$ norms are successfully derived. Some numerical examples including ill-posed Allen-Cahn equations are presented to verify the theoretical results. The numerical method and corresponding error analysis framework developed in this paper also naturally enable us to further investigate adapted finite element methods \cite{Ciarlet1978,BinevDahmen2004,Bieterman1982} for solving PDEs in regions with corner points with solid theoretical support.

\bibliographystyle{abbrv}
\bibliography{ref}

\begin{thebibliography}{10}

\bibitem{Abels2009}
Helmut Abels.
\newblock On a diffuse interface model for two-phase flows of viscous,
  incompressible fluids with matched densities.
\newblock 194(2):463--506, 2009.

\bibitem{AbelsLengeler2014}
Helmut Abels and Daniel Lengeler.
\newblock On sharp interface limits for diffuse interface models for two-phase
  flows.
\newblock {\em Interfaces Free Bound.}, 16(3):395--418, 2014.

\bibitem{Adams1975}
Robert~A. Adams.
\newblock {\em Sobolev spaces}, volume Vol. 65 of {\em Pure and Applied
  Mathematics}.
\newblock Academic Press [Harcourt Brace Jovanovich, Publishers], New
  York-London, 1975.

\bibitem{AlandLowengrub2010}
S.~Aland, J.~Lowengrub, and A.~Voigt.
\newblock Two-phase flow in complex geometries: a diffuse domain approach.
\newblock {\em CMES Comput. Model. Eng. Sci.}, 57(1):77--107, 2010.

\bibitem{Acta1979}
Sam Allen and John Cahn.
\newblock A microscopic theory for antiphase domain boundary motion and its
  application to antiphase domain coarsening.
\newblock {\em Acta Metallurgica}, 27:1085--1095, 06 1979.

\bibitem{AndersonMcFadden1998}
D.~M. Anderson, G.~B. McFadden, and A.~A. Wheeler.
\newblock Diffuse-interface methods in fluid mechanics.
\newblock In {\em Annual review of fluid mechanics, {V}ol. 30}, volume~30 of
  {\em Annu. Rev. Fluid Mech.}, pages 139--165. Annual Reviews, Palo Alto, CA,
  1998.

\bibitem{AmlanRay2023}
Amlan~K. Barua, Ray Chew, Shuwang Li, John Lowengrub, Andreas M\"unch, and
  Barbara Wagner.
\newblock Sharp-interface problem of the {O}hta-{K}awasaki model for symmetric
  diblock copolymers.
\newblock {\em J. Comput. Phys.}, 481:Paper No. 112032, 23, 2023.

\bibitem{BedrossianZhu2010}
Jacob Bedrossian, James~H. von Brecht, Siwei Zhu, Eftychios Sifakis, and
  Joseph~M. Teran.
\newblock A second order virtual node method for elliptic problems with
  interfaces and irregular domains.
\newblock {\em J. Comput. Phys.}, 229(18):6405--6426, 2010.

\bibitem{Bieterman1982}
M.~Bieterman and I.~Babu\v~ska.
\newblock The finite element method for parabolic equations. {I}. {A}
  posteriori error estimation.
\newblock {\em Numer. Math.}, 40(3):339--371, 1982.

\bibitem{BinevDahmen2004}
Peter Binev, Wolfgang Dahmen, and Ron DeVore.
\newblock Adaptive finite element methods with convergence rates.
\newblock {\em Numer. Math.}, 97(2):219--268, 2004.

\bibitem{BrannickLiu2015}
J.~Brannick, C.~Liu, T.~Qian, and H.~Sun.
\newblock Diffuse interface methods for multiple phase materials: an energetic
  variational approach.
\newblock {\em Numer. Math. Theory Methods Appl.}, 8(2):220--236, 2015.

\bibitem{BuenoFenton2006}
Alfonso Bueno-Orovio, V\'ictor~M. P\'erez-Garc\'ia, and Flavio~H. Fenton.
\newblock Spectral methods for partial differential equations in irregular
  domains: the spectral smoothed boundary method.
\newblock {\em SIAM J. Sci. Comput.}, 28(3):886--900, 2006.

\bibitem{MartinaBoris2023}
Martina Buka\v~c, Boris Muha, and Abner~J. Salgado.
\newblock Analysis of a diffuse interface method for the {S}tokes-{D}arcy
  coupled problem.
\newblock {\em ESAIM Math. Model. Numer. Anal.}, 57(5):2623--2658, 2023.

\bibitem{BurgerElvetun2015}
Martin Burger, Ole L\o~seth Elvetun, and Matthias Schlottbom.
\newblock Diffuse interface methods for inverse problems: case study for an
  elliptic {C}auchy problem.
\newblock {\em Inverse Problems}, 31(12):125002, 28, 2015.

\bibitem{BurgerElvetun2017}
Martin Burger, Ole L\o~seth Elvetun, and Matthias Schlottbom.
\newblock Analysis of the diffuse domain method for second order elliptic
  boundary value problems.
\newblock {\em Found. Comput. Math.}, 17(3):627--674, 2017.

\bibitem{Ciarlet1978}
Philippe~G. Ciarlet.
\newblock {\em The {F}inite {E}lement {M}ethod for {E}lliptic {P}roblems}.
\newblock North-Holland Publishing Co., Amsterdam-New York-Oxford, 1978.

\bibitem{DolbowHarari2009}
John Dolbow and Isaac Harari.
\newblock An efficient finite element method for embedded interface problems.
\newblock {\em Internat. J. Numer. Methods Engrg.}, 78(2):229--252, 2009.

\bibitem{DuFeng2020}
Qiang Du and Xiaobing Feng.
\newblock The phase field method for geometric moving interfaces and their
  numerical approximations.
\newblock In {\em Geometric partial differential equations. {P}art {I}},
  volume~21 of {\em Handb. Numer. Anal.}, pages 425--508.
  Elsevier/North-Holland, Amsterdam, 2020.

\bibitem{ElliottStinner2011}
Charles~M. Elliott, Bj\"orn Stinner, Vanessa Styles, and Richard Welford.
\newblock Numerical computation of advection and diffusion on evolving diffuse
  interfaces.
\newblock {\em IMA J. Numer. Anal.}, 31(3):786--812, 2011.

\bibitem{FeireislRocca2010}
Eduard Feireisl, Hana Petzeltov\'a, Elisabetta Rocca, and Giulio Schimperna.
\newblock Analysis of a phase-field model for two-phase compressible fluids.
\newblock {\em Math. Models Methods Appl. Sci.}, 20(7):1129--1160, 2010.

\bibitem{FranzRoos2012}
Sebastian Franz, Roland G\"artner, Hans-G\"org Roos, and Axel Voigt.
\newblock A note on the convergence analysis of a diffuse-domain approach.
\newblock {\em Comput. Methods Appl. Math.}, 12(2):153--167, 2012.

\bibitem{FriesBelytschko2010}
Thomas-Peter Fries and Ted Belytschko.
\newblock The extended/generalized finite element method: an overview of the
  method and its applications.
\newblock {\em Internat. J. Numer. Methods Engrg.}, 84(3):253--304, 2010.

\bibitem{FrigeriGrasselli2015}
Sergio Frigeri, Maurizio Grasselli, and Elisabetta Rocca.
\newblock A diffuse interface model for two-phase incompressible flows with
  non-local interactions and non-constant mobility.
\newblock {\em Nonlinearity}, 28(5):1257--1293, 2015.

\bibitem{Roger1984}
David~F. Griffiths, editor.
\newblock {\em The {M}athematical {B}asis of {F}inite {E}lement {M}ethods},
  volume 343.
\newblock Clarendon Press, Oxford, 1984.

\bibitem{GuoYu2021}
Zhenlin Guo, Fei Yu, Ping Lin, Steven Wise, and John Lowengrub.
\newblock A diffuse domain method for two-phase flows with large density ratio
  in complex geometries.
\newblock {\em J. Fluid Mech.}, 907:Paper No. A38, 28, 2021.

\bibitem{HaoJu2025}
Wenrui Hao, Lili Ju, and Yuejin Xu.
\newblock Optimal error estimates of the diffuse domain method for second order
  parabolic equations, 2025.

\bibitem{HellrungLee2012}
Jeffrey~Lee Hellrung, Jr., Luming Wang, Eftychios Sifakis, and Joseph~M. Teran.
\newblock A second order virtual node method for elliptic problems with
  interfaces and irregular domains in three dimensions.
\newblock {\em J. Comput. Phys.}, 231(4):2015--2048, 2012.

\bibitem{Jerg2020}
Katharina~I. Jerg, Ren\'e{}~Phillip Austerm\"uhl, Karsten Roth, Jonas
  Gro\ss~e{} Sundrup, Guido Kanschat, J\"urgen~W. Hesser, and Lisa Wittmayer.
\newblock Diffuse domain method for needle insertion simulations.
\newblock {\em Int. J. Numer. Methods Biomed. Eng.}, 36(9):e3377, 18, 2020.

\bibitem{KockelkorenLevine2003}
Julien Kockelkoren, Herbert Levine, and Wouter-Jan Rappel.
\newblock Computational approach for modeling intra- and extracellular
  dynamics.
\newblock {\em Phys. Rev. E}, 68:037702, Sep 2003.

\bibitem{KarlLowengrub2015}
Karl~Yngve Lerv\aa~g and John Lowengrub.
\newblock Analysis of the diffuse-domain method for solving {PDE}s in complex
  geometries.
\newblock {\em Commun. Math. Sci.}, 13(6):1473--1500, 2015.

\bibitem{LeVequeLi1995}
Randall~J. LeVeque and Zhi~Lin Li.
\newblock Erratum: ``{T}he immersed interface method for elliptic equations
  with discontinuous coefficients and singular sources'' [{SIAM} {J}. {N}umer.\
  {A}nal.\ {\bf 31} (1994), no.\ 4, 1019--1044; {MR}1286215 (95g:65139)].
\newblock {\em SIAM J. Numer. Anal.}, 32(5):1704, 1995.

\bibitem{LiFeng2015}
Jian~Jing Li and Xiu~Fang Feng.
\newblock Higher-order finite difference scheme for solving one-dimensional
  elliptic equations with discontinuous coefficient and singular sources.
\newblock {\em Commun. Appl. Math. Comput.}, 29(4):503--513, 2015.

\bibitem{LiLowengrub2009}
X.~Li, J.~Lowengrub, A.~R\"atz, and A.~Voigt.
\newblock Solving {PDE}s in complex geometries: a diffuse domain approach.
\newblock {\em Commun. Math. Sci.}, 7(1):81--107, 2009.

\bibitem{LiIto2006}
Zhilin Li and Kazufumi Ito.
\newblock {\em The immersed interface method}, volume~33 of {\em Frontiers in
  Applied Mathematics}.
\newblock Society for Industrial and Applied Mathematics (SIAM), Philadelphia,
  PA, 2006.
\newblock Numerical solutions of PDEs involving interfaces and irregular
  domains.

\bibitem{LiuChai2022}
Xi~Liu, Zhenhua Chai, Chengjie Zhan, Baochang Shi, and Wenhuan Zhang.
\newblock A diffuse-domain phase-field lattice {B}oltzmann method for two-phase
  flows in complex geometries.
\newblock {\em Multiscale Model. Simul.}, 20(4):1411--1436, 2022.

\bibitem{NguyenStoter2018}
Lam~H. Nguyen, Stein K.~F. Stoter, Martin Ruess, Manuel~A. Sanchez~Uribe, and
  Dominik Schillinger.
\newblock The diffuse {N}itsche method: {D}irichlet constraints on phase-field
  boundaries.
\newblock {\em Internat. J. Numer. Methods Engrg.}, 113(4):601--633, 2018.

\bibitem{AndreasAxel2006}
Andreas R\"atz and Axel Voigt.
\newblock P{DE}'s on surfaces---a diffuse interface approach.
\newblock {\em Commun. Math. Sci.}, 4(3):575--590, 2006.

\bibitem{Schlottbom2016}
Matthias Schlottbom.
\newblock Error analysis of a diffuse interface method for elliptic problems
  with {D}irichlet boundary conditions.
\newblock {\em Appl. Numer. Math.}, 109:109--122, 2016.

\bibitem{Stoter2017}
Stein K.~F. Stoter, Peter M\"uller, Luca Cicalese, Massimiliano Tuveri, Dominik
  Schillinger, and Thomas J.~R. Hughes.
\newblock A diffuse interface method for the {N}avier-{S}tokes/{D}arcy
  equations: perfusion profile for a patient-specific human liver based on
  {MRI} scans.
\newblock {\em Comput. Methods Appl. Mech. Engrg.}, 321:70--102, 2017.

\bibitem{TeigenLi2009}
Knut~Erik Teigen, Xiangrong Li, John Lowengrub, Fan Wang, and Axel Voigt.
\newblock A diffuse-interface approach for modeling transport, diffusion and
  adsorption/desorption of material quantities on a deformable interface.
\newblock {\em Commun. Math. Sci.}, 7(4):1009--1037, 2009.

\bibitem{TorabiLowengrub2009}
Solmaz Torabi, John Lowengrub, Axel Voigt, and Steven Wise.
\newblock A new phase-field model for strongly anisotropic systems.
\newblock {\em Proc. R. Soc. Lond. Ser. A Math. Phys. Eng. Sci.},
  465(2105):1337--1359, 2009.
\newblock With supplementary material available online.

\bibitem{WangChertock2023}
Chenxi Wang, Alina Chertock, Shumo Cui, Alexander Kurganov, and Zhen Zhang.
\newblock A diffuse-domain-based numerical method for a chemotaxis-fluid model.
\newblock {\em Math. Models Methods Appl. Sci.}, 33(2):341--375, 2023.

\bibitem{WangKai2023}
Wang Xiao, Kai Liu, John Lowengrub, Shuwang Li, and Meng Zhao.
\newblock Three-dimensional numerical study on wrinkling of vesicles in
  elongation flow based on the immersed boundary method.
\newblock {\em Phys. Rev. E}, 107(3):Paper No. 035103, 12, 2023.

\bibitem{YangMao2019}
Jinjin Yang, Shipeng Mao, Xiaoming He, Xiaofeng Yang, and Yinnian He.
\newblock A diffuse interface model and semi-implicit energy stable finite
  element method for two-phase magnetohydrodynamic flows.
\newblock {\em Comput. Methods Appl. Mech. Engrg.}, 356:435--464, 2019.

\bibitem{ZhangEzhov2025}
Ray~Zirui Zhang, Ivan Ezhov, Michal Balcerak, Andy Zhu, Benedikt Wiestler,
  Bjoern Menze, and John~S. Lowengrub.
\newblock Personalized predictions of glioblastoma infiltration: Mathematical
  models, physics-informed neural networks and multimodal scans.
\newblock {\em Medical Image Analysis}, 101:103423, 2025.

\bibitem{ZhaoWei2009}
Shan Zhao and G.~W. Wei.
\newblock Matched interface and boundary ({MIB}) for the implementation of
  boundary conditions in high-order central finite differences.
\newblock {\em Internat. J. Numer. Methods Engrg.}, 77(12):1690--1730, 2009.

\end{thebibliography}
\end{document}